\newtheorem{theorem}{Theorem}[section]
\newtheorem{lemma}[theorem]{Lemma}
\newtheorem{corollary}[theorem]{Corollary}
\newtheorem{definition}[theorem]{Definition}
\newtheorem{remark}[theorem]{Remark}
\newtheorem{proposition}[theorem]{Proposition}
\author{Ellen Eischen\thanks{Ellen Eischen's research is partially supported by  National Science Foundation Grant DMS-1249384.  She would also like to thank Columbia University for its hospitality while she was a Visiting Scholar there during the spring of 2014.}\hspace{0.05in} and Xin Wan}
\title{\textsc{$p$-adic Eisenstein Series and $L$-Functions of certain cusp forms on definite unitary groups}}
\date{\today}
\newcommand{\adeles}{\mathbb{A}}
\newcommand{\CK}{\mathcal{K}}
\newcommand{\IC}{\mathbb{C}}
\newcommand{\IQ}{\mathbb{Q}}
\newcommand{\Hom}{\mathrm{Hom}}
\newcommand{\sph}{\mathrm{sph}}
\newcommand{\ZZ}{\mathbb{Z}}
\newcommand{\gl}{\mathrm{GL}}
\newcommand{\uo}{\underline{\omega}}
\newcommand{\uk}{\underline{k}}
\newcommand{\can}{\mathrm{can}}
\newcommand{\Kling}{\mathrm{Kling}}
\newcommand{\sieg}{\mathrm{Sieg}}
\newcommand{\smfin}{\mathrm{sm-fin}}
\newcommand{\Spec}{\mathrm{Spec}}
\newcommand{\low}{\mathrm{low}}
\newcommand{\GL}{\mathrm{GL}}
\begin{document}
\bibliographystyle{amsalpha}

\maketitle
\begin{abstract}
We construct $p$-adic families of Klingen Eisenstein series and $L$-functions for cuspforms (not necessarily ordinary) unramified at an odd prime $p$ on definite unitary groups of signature $(r, 0)$ (for any positive integer $r$) for a quadratic imaginary field $\mathcal{K}$ split at $p$.
When $r=2$, we show that the constant term of the Klingen Eisenstein family is divisible by a certain $p$-adic $L$-function.
\end{abstract}

\setcounter{tocdepth}{2}
\tableofcontents
\setcounter{secnumdepth}{3}

\section{Introduction}\label{intro-section}

\subsection{Brief overview and relationship with other work}

In this paper, we construct $p$-adic families of Klingen Eisenstein series for cusp forms of level prime to $p$ (which are unramified principle series representations at $p$ and hence of finite slope) on definite unitary groups.  When the signature of the unitary group is $(2, 0)$, we also show that the constant terms of these families of Klingen Eisenstein series are divisible by a certain $p$-adic $L$-function.  One motivation for doing this is that this is a step toward proving the Iwasawa Main Conjectures for not necessarily ordinary forms.  Note that unlike in the ordinary case, we need to construct vector-valued Eisenstein series.  (To construct non-ordinary families in the most generality, we need to include representations that are not necessarily one-dimensional, hence we must consider vector-valued -- not just scalar-valued -- cusp forms.)

This work builds on the constructions in the authors' prior papers \cite{Eischen, apptoHLS, apptoHLSvv, WAN}.    Unlike the work in \cite{apptoHLS, apptoHLSvv}, which was restricted to Siegel Eisenstein series, this paper also handles the case of Klingen Eisenstein series and pullbacks.  We also note that we expect that the $p$-adic $L$-function in this paper is a special case of the one that Harris, Li, and Skinner announced nine years ago in \cite{HLSpaper} that they were constructing in a work in progress; their anticipated result, however, is more general.  (We also note that the first author is collaborating with Harris, Li, and Skinner on the construction of these more general $p$-adic $L$-functions.)  Both the construction in this paper and the one announced in \cite{HLSpaper} rely on pullback integrals (e.g. See Section \ref{pbints}) similar to those arising in the doubling method \cite{GPSR}; thus the approach is similar for both projects.  The paper with Harris, Li, and Skinner, however, concerns only ordinary forms, while the present paper considers the more general finite slope case.  On the other hand, one of the aims of the work with Harris, Li, and Skinner is to obtain a more general interpolation formula than the one in the present paper.

\subsection{Structure of the paper}

In Section \ref{conventions-section}, we introduce some of the conventions with which we work.  We conclude Section \ref{conventions-section} by making use of some of these conventions in order to give a detailed statement of the main theorem.

In Section \ref{Background-section}, we give the background and definitions necessary for our discussion.  In particular, we introduce the unitary groups with which we will work.  We then review the basic theory of automorphic forms on unitary groups, and we provide the key facts about $p$-adic automorphic forms needed for our main results.  We also summarize some key results about differential operators from \cite{Eischen} and results on $q$-expansions, which are needed to prove our main results on $p$-adic interpolation.

In Sections \ref{EseriesPullbkFormulas-section} and \ref{local-computations-section}, we discuss both Siegel and Klingen Eisenstein series on unitary groups.  We first recall certain pullback formulas and results on Fourier coefficients of Eisenstein series.  We then choose specific local data that determine the Eisenstein series, and we compute the corresponding local integrals.

In Section \ref{Global-Computations-section}, we finish constructing the $p$-adic families of Eisenstein series, and we state the main results of this paper.  We first construct a $p$-adic Seigel Eisenstein measure.  Then, from this measure, we construct $p$-adic families of Klingen Eisenstein series.  In Theorem \ref{thisprop-proposition}, we show that two particular pullbacks are equal to each other, when the unitary group is of signature $(2,0)$.  As a consequence, we obtain Corollary \ref{cor5.9}, which says that when the signature of the unitary group is $(2, 0)$, the constant terms of a certain $p$-adic family of Eisenstein series that  we construct is divisible by a particular $p$-adic $L$-function.

\begin{remark}
For most of the material in the first three sections, we work with general unitary groups of arbitrary signature $(r, s)$.  In Sections \ref{FJexpns-section} and \ref{2.12}, though, we restrict to the case in which $s=1$.  The reason for the restriction in that portion of the paper is that we have a good reference in \cite{Hsieh} for the case $s=1$.  (While \cite{LAN}, our reference for much of the geometry, has no such restriction on the signature, it requires substantially more work and notation to formulate this case in our setting.)
\end{remark}

\subsection{Conventions and more detailed statement of main theorem}\label{conventions-section}

In this section, we introduce some of our conventions.

Let $p$ be an odd prime. Let $\mathcal{K}$ be an imaginary quadratic field in which $p$ splits as $v_0\bar{v}_0$.\footnote{Note that all of the results in this paper extend easily to the case of $\mathcal{K}$ a CM field, as in \cite{apptoHLS}.  Our requirement that $\mathcal{K}$ be an imaginary quadratic field is made purely to avoid the more cumbersome notation that results from working with a more general CM extension.}  By class field theory there is a unique $\mathbb{Z}_p^2$-extension $\mathcal{K}_\infty$ of $\mathcal{K}$ unramified away from $p$.   We fix an embedding $\sigma: \mathcal{K}\hookrightarrow \IC$, and we identify each element of $\mathcal{K}$ with its image under $\sigma$.  We  also fix an isomorphism $\iota:\mathbb{C}_p\simeq \mathbb{C}$ such that $v_0$ is induced by this isomorphism.  Note that this fixes a choice of a CM type $\Sigma_{CM}$.  We let $\Gamma_\mathcal{K}=\mathrm{Gal}(\mathcal{K}_\infty/\mathcal{K})$. Complex conjugation $c$ gives an involution on $\Gamma_\mathcal{K}$. We let $\Gamma_{\CK}^{\pm}\subset \Gamma_\mathcal{K}$ be the rank one $\mathbb{Z}_p$-submodule on which $c$ acts by $\pm 1$.

Let $\adeles$ denote the adeles over $\IQ$, and for any field $F$, let $\adeles_F$ denote the adeles over $F$.  Let $\adeles_f$ denote the adeles at the finite places.  For any number field $F$, we denote by $\mathcal{O}_F$ the ring of integers in $F$.

Let $\pi$ be an irreducible cuspidal automorphic representation of the definite general unitary group $GU(r,0)$ of signature $(r, 0)$ and weight $\underline{k}=(a_1,a_2,\ldots,a_r)$ with $a_1\geq\cdots\geq a_r\geq 0$, and let $\varphi\in\pi$.  Then  $\varphi$ can be viewed as a function on a finite set of points.  Let $\xi_0$ be a Hecke character on $\mathcal{K}^\times\backslash \mathbb{A}_\mathcal{K}^\times$ such that $\xi_0|\cdot|^{-\frac{r-1}{2}}$ is of finite order.
 Let $L$ be a finite extension of $\mathbb{Q}_p$ containing all the values of $\varphi$ and $\xi_0|\cdot|^{-\frac{r-1}{2}}$.     Let $\Sigma$ be a finite set of primes of $\IQ$ containing the primes above $p$ and all the primes at which $\xi_0$, $\pi$, or $\mathcal{K}/\mathbb{Q}$ is ramified.  Since $\mathcal{K}_p\simeq \mathcal{K}_{v_0}\times\mathcal{K}_{\bar{v}_0}\simeq \mathbb{Q}_p\times\mathbb{Q}_p$, we have
$$GU(r,0)(\mathbb{Q}_p)\simeq \mathrm{GL}_r(\mathbb{Q}_p)\times\mathbb{Q}_p^\times, g\mapsto (g_{v_0},\mu(g))$$
where $\mu$ is the similitude character.

We define $\Lambda:=\Lambda_\mathcal{K}:=\mathbb{Z}_p\llbracket\Gamma_\mathcal{K}\rrbracket$. For any finite extension $A$ of $\mathbb{Z}_p$, define $\Lambda_{\mathcal{K},A}:=A\llbracket\Gamma_\mathcal{K}\rrbracket$.  In particular, $\Lambda_{\mathcal{K},\mathcal{O}_L}:=\mathcal{O}_L\llbracket\Gamma_\mathcal{K}\rrbracket$, $\Lambda_{\mathcal{K},\mathcal{O}_L}^+:=\mathcal{O}_L[\![\Gamma_\mathcal{K}^+]\!]$, and $\Lambda_{\mathcal{K},\mathcal{O}_L}^-:=\mathcal{O}_L[\![\Gamma_\mathcal{K}^-]\!]$.  In Section \ref{Global-Computations-section}, we will define families of Hecke characters $\tau_\phi$, $\xi_\phi$ parameterized by elements $\phi\in\mathrm{Spec}\Lambda_{\mathcal{K},\mathcal{O}_L}$.

  We now introduce some notation that we will use when working with matrix groups.  We write $\mathrm{diag}(g, h)$ to mean a block diagonal matrix with $g$ in the upper left hand corner and $h$ in the lower right hand corner, and we denote the space of $r\times r$ matrices over a ring $R$ by $M_r(R)$.  Denote by $S_n(R)$ the space of $n\times n$ Hermitian matrices over a ring $R$, and let $S_n^+(R)$ denote the subset of $S(R)$ consisting of positive definite matrices.  Let $K=\prod_v' K_v$ be a compact open subgroup of $GU(r,0)(\mathbb{A}_f)$ such that $K_p=\mathrm{GL}_r(\mathbb{Z}_p)\times \mathbb{Z}_p^\times$.  Fix a Borel subgroup $B$ in $GU(r, 0)$, and let $N$ denote the unipotent radical of $B$.  We let $\Gamma_0(p^t)\subseteq GU(r,0)(\mathbb{Z}_p)$ be the subgroup consisting of matrices congruent to $B(\mathbb{Z}_p)$ modulo $p^t$, and we let $\Gamma_1(p^t)\subseteq \Gamma_0(p^t)$ be the subgroup consisting of matrices congruent to $N(\mathbb{Z}_p)$ modulo $p^t$. Let $K_0(p)=\prod_{v\not=p} K_v \prod_{v=p}\Gamma_0(p)$.

  \subsubsection{The main theorem}

The main results of the paper appear in Section \ref{constterms-section}.  In Theorem \ref{main}, we list these results.

\begin{theorem}\label{main}
Let $\pi$ be a tempered irreducible cuspidal automorphic representation of $GU(r,0)$ of weight $\underline{k}=(a_1,a_2,\ldots,a_r)$, $a_1\geq \cdots \geq a_r\geq 0$, such that $\pi_p$ is the unramified principal series representation $\pi(\chi_1,\ldots,\chi_r)$ for characters $\chi_1, \ldots, \chi_r$ such that the $\chi_i$'s are pairwise distinct.  Let $\varphi\in\pi^{K_0(p)}$ be an eigenvector for all the Hecke operators at $p$.
\begin{itemize}
\item[(i)] There is a constant $C_{\varphi,p}$ and an element $\mathcal{L}_{\varphi,\xi_0}^\Sigma\in\Lambda_{\mathcal{K},\mathcal{O}_L}$ such that for a certain Zariski dense set of arithmetic points $\phi\in\mathrm{Spec}\Lambda_{\mathcal{K},\mathcal{O}_L}$ (to be specified in the text) we have
\begin{align*}
\phi(\mathcal{L}_{\varphi,\xi_0}^\Sigma)=C_{\varphi,p}\cdot c_{\phi, \pi}\cdot L^\Sigma(\tilde{\pi},\xi_\phi,0)
\end{align*}
where $L^\Sigma(\tilde{\pi}, \xi_\phi, 0)$ is the $L$-function (with the factors at $\Sigma$ removed) associated to the contragredient $\tilde{\pi}$ and a certain Hecke character $\xi_\phi$ dependent on $\phi$, and $c_{\phi, \pi}$ is a parameter dependent on $\phi$ and $\pi$ (which we make precise once we have introduced more notation).

\item[(ii)] There is a set of formal $q$-expansions\footnote{We review the key features of Fourier-Jacobi expansions and $q$-expansions in Section \ref{FJexpns-section}.} $\mathbf{E}_{\varphi,\xi_0}:=\{\sum_\beta a_{[g]}^t(\beta)q^\beta\}_{([g],t)}$ with $\sum_\beta a_{[g]}^t(\beta)q^\beta\in\Lambda_{\mathcal{K},\mathcal{O}_L}\otimes_{\mathbb{Z}_p}\mathcal{R}_{[g],\infty}$ where $\mathcal{R}_{[g],\infty}$ is a certain ring defined in Section \ref{algthy-section} and $([g],t)$ are \it{ $p$-adic cusp labels}, such that for a Zariski dense set of arithmetic points $\phi\in\mathrm{Spec}\Lambda_{\mathcal{K},\mathcal{O}_L}$, $\phi(\mathbf{E}_{\varphi,\xi_0})$ is the Fourier-Jacobi expansion of the (projection onto the) highest weight vector of a holomorphic Klingen Eisenstein series $E(f_{\Kling,\phi},z_{\kappa_\phi},-)$, which is an eigenvector for the Hecke operator $U_{t^+}$ (introduced in Section \ref{hecke-section}) with non-zero eigenvalue. Here, $f_{\Kling, \phi}$ is a certain \it{Klingen section} to be defined in the text.

\item[(iii)] If $r=2$, then the constant terms $a_{[g]}^t(0)$ in the $q$-expansion $\mathbf{E}_{\varphi, \xi_0}$ are divisible by $\mathcal{L}_{\varphi,\xi_0}^\Sigma\cdot\mathcal{L}_{\bar{\tau}'}^\Sigma$ where $\mathcal{L}_{\bar{\tau}'}^\Sigma$ is the $p$-adic $L$-function of a Dirichlet character (originally constructed by Kubota and Leopoldt in \cite{KL}).
\end{itemize}
\end{theorem}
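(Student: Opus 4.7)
The overall plan is to execute all three parts via the same pullback (doubling) construction, starting from the $p$-adic Siegel Eisenstein measure $\mathcal{E}$ already constructed in the authors' prior work and reviewed in Section \ref{Global-Computations-section}. This $\mathcal{E}$ is a $\Lambda_{\mathcal{K},\mathcal{O}_L}$-valued Eisenstein section on a large unitary group whose specializations at arithmetic $\phi$ are classical Siegel Eisenstein series. Pulling $\mathcal{E}$ back along the embedding of $GU(r,0)\times GU(r+1,1)$ into the larger group (via the pullback formulas of Section \ref{EseriesPullbkFormulas-section}) produces a $\Lambda_{\mathcal{K},\mathcal{O}_L}$-valued sum of tensor products $\sigma_1\otimes\sigma_2$, in which $\sigma_1$ is a section on $GU(r,0)$ that can be paired with the cusp form $\varphi$, and $\sigma_2$ is the sought Klingen Eisenstein section $f_{\Kling,\phi}$ on $GU(r+1,1)$. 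This single object will feed all three parts.

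For part (i), I would define $\mathcal{L}_{\varphi,\xi_0}^\Sigma$ to be the $\Lambda_{\mathcal{K},\mathcal{O}_L}$-valued pairing $\langle\mathcal{E}_{\mathrm{pull}},\bar\varphi\rangle$ on the $GU(r,0)$ factor; since $GU(r,0)$ is definite, $\varphi$ is a function on a finite set and this pairing is an algebraic finite sum, so it literally lands in $\Lambda_{\mathcal{K},\mathcal{O}_L}$. Specializing at arithmetic $\phi$, the Garrett--Piatetski-Shapiro--Rallis doubling identity (combined with the local computations of Section \ref{local-computations-section}) unfolds the pairing as a product of local zeta integrals times $L^\Sigma(\tilde\pi,\xi_\phi,0)$. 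Because the local section at $p$ has been chosen as an eigenvector for $U_{t^+}$ and in a $\phi$-invariant way (with $\phi$-dependence only through $\xi_\phi$), the local zeta integral at $p$ is a nonzero constant $C_{\varphi,p}$ independent of $\phi$; the remaining ramified and archimedean factors collect into $c_{\phi,\pi}$. The unramified-principal-series hypothesis on $\pi_p$ together with the distinctness of the $\chi_i$ enters here: it guarantees that $\pi_p^{K_0(p)}$ splits into distinct Hecke eigenlines, so a $U_{t^+}$-eigenvector choice is unambiguous and the local zeta computation can be read off explicitly.

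For part (ii), the same construction is run with $GU(r+1,1)$ left as the variable: pulling $\mathcal{E}$ back and pairing only on the $GU(r,0)$ factor against $\bar\varphi$ yields a $\Lambda_{\mathcal{K},\mathcal{O}_L}$-valued automorphic form on $GU(r+1,1)$, which I would take as $\mathbf{E}_{\varphi,\xi_0}$. Its Fourier--Jacobi $q$-expansion at each $p$-adic cusp $([g],t)$ is obtained from the known $q$-expansion of $\mathcal{E}$ using the formulas recalled in Section \ref{FJexpns-section}; to reach the vector-valued weight $\underline{k}=(a_1,\ldots,a_r)$ and its highest-weight projection, I would apply the $p$-adic differential operators of \cite{Eischen} at the measure level, using the fact that they commute with specialization. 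The $U_{t^+}$-eigenvector property of each $\phi(\mathbf{E}_{\varphi,\xi_0})$ is then automatic from the $U_{t^+}$-eigenvector choice made at $p$. For part (iii), the constant terms $a_{[g]}^t(0)$ are computed from these Fourier--Jacobi formulas together with Theorem \ref{thisprop-proposition}, which when $r=2$ identifies the constant-term pullback with a product of two doubling integrals: the first reproduces $\mathcal{L}_{\varphi,\xi_0}^\Sigma$ from part (i), and the second, a doubling integral on the $U(1)$ factor appearing in the Levi of the cusp inside $GU(3,1)$, reduces by Kubota--Leopoldt to the Dirichlet $p$-adic $L$-function $\mathcal{L}_{\bar\tau'}^\Sigma$ of the character $\bar\tau'$ naturally extracted from $\xi_0$. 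The divisibility then follows as an equality in $\Lambda_{\mathcal{K},\mathcal{O}_L}$.

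The principal obstacle is the finite-slope (non-ordinary) setting: Hida's ordinary projection is unavailable, so the local section at $p$ must be \emph{simultaneously} compatible with the doubling pullback, a $U_{t^+}$-eigenvector, and $\phi$-invariant apart from $\xi_\phi$; tracking these three constraints through a vector-valued measure -- and arranging the Eischen differential operators so that the resulting $\mathbf{E}_{\varphi,\xi_0}$ has the correct highest-weight projection at every arithmetic $\phi$ -- is the bulk of the technical work. I expect the hardest single ingredient to be Theorem \ref{thisprop-proposition} itself: the identification of the two pullbacks requires matching Siegel sections across distinct unitary groups before one can recognize the second pullback as a Dirichlet $L$-value, and it is precisely because the signature $(3,1)$ at $r=2$ yields a cusp whose Levi contains a one-dimensional unitary factor that this identification simplifies enough to yield the clean divisibility in part (iii).
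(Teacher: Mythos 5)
Your overall architecture matches the paper's: both parts (i) and (ii) are obtained by pulling back a $p$-adic Siegel Eisenstein measure along the doubling embeddings and pairing against $\varphi$ on the definite factor (which is a finite algebraic sum, hence lands in $\Lambda_{\mathcal{K},\mathcal{O}_L}$), with the Eischen differential operators applied at the measure level to reach vector-valued weights, and the $U_{t^+}$-eigenvector property coming from the choice of local section at $p$ together with the regularity of the $\chi_i$. Those parts of your proposal are essentially the paper's proof, up to minor bookkeeping (the paper puts the $\phi$-dependent local factor at $p$ into $c_{\phi,\pi}$ rather than into $C_{\varphi,p}$, which it reserves for the $\phi$-independent constants at the ramified places away from $p$; and the paper must additionally verify the identity $E^{C^\infty}_{(\underline{k},0;\kappa)}=E^{p\text{-adic}}_{(\underline{k},0;\kappa)}$ by adapting Urban's unit-root-splitting argument, and separately argue boundedness of the vector-valued measure).

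The genuine gap is in part (iii), and specifically in your account of why $r=2$ is needed. You attribute the restriction to the signature $(3,1)$ producing ``a cusp whose Levi contains a one-dimensional unitary factor''; but the Klingen parabolic of $GU(r+1,1)$ has a $\mathrm{GL}_1$ factor in its Levi for \emph{every} $r$, and the extra Dirichlet $L$-factor (interpolated by $\mathcal{L}^\Sigma_{\bar\tau'}$) arises for all $r$ as the ratio of the normalizing $L$-factor products for the degree-$(r+1)$ and degree-$r$ Siegel sections. What actually forces $r=2$ is the need to prove that the two Archimedean constants $c_{(\underline{k},0,\kappa)}$ and $c'_{(\underline{k},0,\kappa)}$ -- arising from the two distinct Archimedean pullback integrals in Definitions \ref{csubscriptdef1} and \ref{csubscriptdef2} -- are \emph{equal}, since the constant term of the Klingen family (after Lemma \ref{Harris} kills the intertwining term $A(\rho,f,s)_{-s}$) is only divisible by $\mathcal{L}^\Sigma_{\varphi,\xi_0}\cdot\mathcal{L}^\Sigma_{\bar\tau'}$ if these two normalizations agree. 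The paper does not compute these integrals; it proves their equality indirectly by invoking Sato--Tate to produce a split prime $\ell$ at which $\pi$ is ordinary, running both constructions $\ell$-adically, comparing at the Zariski-dense set of scalar-weight ordinary points, and then specializing back to vector-valued points. This auxiliary-prime argument is the content of Theorem \ref{thisprop-proposition}(i), is the single step you flag as hardest, and is entirely absent from your proposal; without it the divisibility in part (iii) is not established.
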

\begin{remark}
The Fourier-Jacobi expansion in (ii) is obtained from a bounded measure of $p$-adic modular forms on $GU(r+1,1)$ on $\Gamma_\mathcal{K}\simeq \mathbb{Z}_p^2$, interpolating the corresponding holomorphic Klingen Eisenstein series.
\end{remark}
\begin{remark}
In part (iii), we assumed $r=2$. This is because we need to know that certain constants ($c_{(\underline{k},0,\kappa)}$ and $c_{(\underline{k},0,\kappa)}'$) coming from different Archimedean integrals are equal. We do not compute the precise formulas for them. Instead we show this by comparison with the construction in the ordinary case.  In fact, all what we need is the existence of some split prime $\ell$ such that there is an irreducible cuspidal automorphic representation of $GU(r,0)$ that is ordinary at the prime $\ell$ and is tempered.  If $r=2$, this follows from the Sato-Tate Conjecture. There are other cases where this can be done. For example, when $r$ is even, one might construct such automorphic representations by inducing certain Hecke characters of CM fields.  We leave this to the reader.
\end{remark}

\section{Background}\label{Background-section}
\subsection{Notation}\label{notation-section}
 Let $\epsilon$ denote the cyclotomic character and $\omega$ the Techimuller character. For a character $\psi$ of $\mathcal{K}_v^\times$ or $\mathbb{A}_\mathcal{K}^\times$ we often write $\psi'$ for the restriction to $\mathbb{Q}_v^\times$ or $\mathbb{A}_\mathbb{Q}^\times$. For a character $\tau$ of $\mathcal{K}^\times$ or $\mathbb{A}_\mathcal{K}^\times$ we define $\tau^c$ by $\tau^c(x)=\tau(x^c)$, where $x^c$ denotes the complex conjugate of $x$.

Let $\xi_p$ be a character of $\mathbb{Q}_p$.  If the conductor is $(p^t)$ with $t>0$, then we define the Gauss sum $\mathfrak{g}(\xi_p)=\sum_{a\in(\mathbb{Z}_p/p^t\mathbb{Z}_p)^\times}\xi_p(a)e^{2\pi ia/p^t}$.  Later, we will also view the Gauss sum as a function on multiplicative characters in the usual way.

 Let $\Psi={\Psi}_\mathcal{K}:G_\mathcal{K}\rightarrow \Gamma_\mathcal{K}\hookrightarrow\Lambda_\mathcal{K}^\times$ be the canonical character. We define $\boldsymbol{\varepsilon}_\mathcal{K}$ to be the composition of $\Psi_\mathcal{K}$ with the reciprocity map of global class field theory, which we denote by $\mathrm{rec}_\mathcal{K}$.  Here we use the geometric normalization of class field theory. We also define $\Psi^{\pm}$ to be the composition of $\Psi$ with the $\mathcal{O}_L$-morphism $\Lambda_{\mathcal{K},\mathcal{O}_L}\rightarrow \Lambda_{\mathcal{K},\mathcal{O}_L}^{\pm}$ taking $\gamma^\mp$ to $0$, where $\gamma^\pm$ denotes a topological generator of $\Lambda_{\mathcal{K},\mathcal{O}_L}^{\pm}$.

  For $n>0$, we write $B=B_n$ for the standard upper triangular Borel subgroup of $\GL_n$ and $N=N_n$ for its unipotent radical. We write ${ }^tB$ and ${ }^tN$ for the opposite Borel and unipotent radical. For a representation $V_1$ of $G_1$ and $V_2$ of $G_2$, we write $V_1\boxtimes V_2$ for the natural representation of $G_1\times G_2$ on $V_1\otimes V_2$. We use this notation to distinguish from the tensor product representation of a single group, which we denote by $\otimes$.

  We refer to \cite[Section 2.8]{Hsieh} for the discussion of the CM period $\Omega_\infty$ and the $p$-adic period $\Omega_p$ associated to $\mathcal{K}$.

\subsection{Unitary Groups}\label{unitarygroups-section}
Let  $\zeta$ be a diagonal matrix such that $i^{-1}\zeta$ is positive definite with entries in $\mathcal{O}_{\mathcal{K}}$ prime to $p$, and let $GU(r,0)$ denote the unitary similitude group associated to $\zeta$.  Let $GU(r+1,1)$ denote the unitary similitude group associated to the matrix $\begin{pmatrix}&&1\\&\zeta&\\-1&&\end{pmatrix}$.   Note that the signature of $GU(r, 0)$ (respectively, $GU(r+1, 1)$) is $(r, 0)$ (respectively, $(r+1, 1)$).
More generally, given nonnegative integers $r\geq s$, we define
\begin{align*}
\theta_{r,s}=\begin{pmatrix}&&1_s\\&\zeta&\\-1_s&&\end{pmatrix}
\end{align*}
\index{$\theta_{r,s}$} where $\zeta$ is a diagonal $(r-s)\times (r-s)$ matrix such that $i^{-1}\zeta$ is positive definite.  (When $r = s$, we define $\theta_{r,s}=\begin{pmatrix}&1_s\\-1_s&\end{pmatrix}.$)  Let $V=V(r,s)$ be the skew Hermitian space over $\mathcal{K}$ with respect to this metric, i.e. $\mathcal{K}^{r+s}$ equipped with the metric given by
\begin{align}\label{pairingrs-def}
\langle u,v\rangle:=u\theta_{r,s}{}^t\!\bar{v}.
\end{align} We define algebraic groups $G:=GU(r,s)$ \index{$G$} and $U(r,s)$ whose $R$-points, for any $\IQ$-algebra $R$, are
\begin{align}\label{GU-defn}
G(R)=GU(r,s)(R):=\left\{g\in GL_{r+s}(\mathcal{K}\otimes_\IQ R)|g\theta_{r,s}g^*=\mu(g)\theta_{r,s},\mu(g)\in R^\times\right\},
\end{align}
where $g^*:={ }^t\bar{g}$,
and
$$U(r,s)(R):=\{g\in GU(r,s)(R)|\mu(g)=1\}.$$
(The function $\mu: GU(r,s)\rightarrow \mathbb{G}_m$ is called the {\it similitude character}.)
Given a positive integer $n$, we sometimes write $GU_n$ and $U_n$ for $GU(n,n)$ and $U(n,n)$, respectively.
We have the following embedding:
\begin{align}\label{r1-embedding}
GU(r,0)\times \mathrm{Res}_{\mathcal{K}/\mathbb{Z}}\mathbb{G}_m&\rightarrow GU(r+1,1)\\
g\times x&\mapsto m(g, x):= \begin{pmatrix}\mu(g)\bar{x}^{-1}&&\\&g&\\&&x\end{pmatrix}\nonumber.
\end{align}
Let $P$ be the parabolic subgroup of $GU(r+1,1)$ consisting of matrices such that the entries in the first column below the diagonal and the entries in the last row to the left of the diagonal are $0$. We let $M_P$ be the Levi subgroup of $P$.  We define $G_P\subseteq M_P$ to be the set of block diagonal matrices $\mathrm{diag}(1, g, \mu g)$ with $g\in GU(r, 0)$.  For $r\geq s$, we define $U(s, r) = U(r, s)$, viewed as a unitary group with the opposite signature of the group $U(r, s)$ defined above.  We define an embedding
$$\gamma: \{g_1\times g_2\in GU(r+1,1)\times GU(0,r), \mu(g_1)=\mu(g_2)\}\rightarrow GU(r+1,r+1)$$
$$g_1\times g_2\rightarrow S^{-1}\mathrm{diag}(g_1,g_2)S$$
for
\begin{align}\label{Sdefn-equ}
S=\begin{pmatrix}1&&&\\&1&&\frac{\zeta}{2}\\&&1&\\&-1&&-\frac{\zeta}{2}\end{pmatrix}.
\end{align}
  We also define embeddings
$$\gamma': \{g_1\times g_2\in GU(r,0)\times  GU(0,r), \mu(g_1)=\mu(g_2)\}\rightarrow GU(r,r)$$
$$g_1\times g_2\rightarrow {S'}^{-1}\mathrm{diag}(g_1,g_2)S'$$
for
\begin{align}\label{Sprimedefn-equ}
S'=\begin{pmatrix}1&-\frac{\zeta}{2}\\-1&-\frac{\zeta}{2}\end{pmatrix}.
\end{align}
\begin{remark}
We work with coordinates here, because they are useful for our later computations.  We note, though, that one could rephrase the discussion of this section in a coordinate-free way (at the expense of not having already chosen coordinates for our later computations).  More specifically, if $\langle, \rangle_{r, s}$ is the pairing on $V(r, s)$ defined in Equation \eqref{pairingrs-def} and $\langle, \rangle_{r, r}$ is the pairing on $V(r,r) = V(r, 0)\oplus V(r, 0)$ defined by $\langle(v, w), (v', w')\rangle_{r,r} = \langle v, v'\rangle_{r, 0}-\langle w, w'\rangle_{r, 0}$, then the natural embedding of unitary groups $U(\langle, \rangle_{r, 0})\times U(-\langle , \rangle_{r, 0})\hookrightarrow U(\langle, \rangle_{r, r})$ preserving these pairings is the same as the embedding $\gamma'$.  Similarly, if we write
\begin{align*}
\langle, \rangle_{r+1, r+1} = \langle, \rangle_{r+1, 1}\oplus \langle,\rangle_{0, r}
\end{align*}
(following the notation of Shimura in \cite[Section 1.1]{Shi97}),
then $\gamma$ is the natural embedding
\begin{align*}
U(\langle, \rangle_{r+1, 1})\times U(-\langle, \rangle_{r, 0})\hookrightarrow U(\langle, \rangle_{r+1, r+1}).
\end{align*}
\end{remark}

\subsection{Hermitian Symmetric Domain}\label{domain}
Suppose $r\geq s>0$. When there is no ambiguity about $r$ and $s$, we shall write $\theta$ in place of $\theta_{r,  s}$.  Then the Hermitian symmetric domain for $GU(r,s)$ is
$$X^+=X_{r,s}=\left\{\tau=\begin{pmatrix}x\\y \end{pmatrix}|x\in M_s(\mathbb{C}),y\in M_{(r-s)\times s}(\mathbb{C}),i(x^*-x)>-iy^*\theta^{-1}y\right\}.$$\index{$X^+$}   Note that when $r =s$, we take $X_{r, s}$ to be
\begin{align*}
X_{r, r} = \left\{x\in M_r\left(\mathbb{C}\right)|i(x^*-x)>0\right\}.
\end{align*}
For $\alpha\in GU(r,s)(\mathbb{R}),$ we write
$$\alpha=\begin{pmatrix}a&b&c\\g&e&f\\h&l&d\end{pmatrix}$$
according to the standard basis of $V$ together with the block decomposition with respect to $s+(r-s)+s$. There is an action of $\alpha\in G(\mathbb{R})^+$  (Here, the superscript $+$ denotes the component with positive similitude at the Archimedean place.) on $X_{r,s}$ defined by
$$\alpha\begin{pmatrix}x\\y\end{pmatrix}=\begin{pmatrix}ax+by+c \\gx+ey+f \end{pmatrix}(hx+ly+d)^{-1}.$$
If $rs=0$, $X_{r,s}$ consists of a single point written $\boldsymbol{x}_0$ with the trivial action of $G$. For an open compact subgroup $U$ of $G(\mathbb{A}_{\mathbb{Q},f})$, put
$$M_G(X^+,U):=G(\mathbb{Q})^+\backslash \left(X^+\times G(\mathbb{A}_{\mathbb{Q},f})\right)/U$$ where $U$ is an open compact subgroup of $G(\mathbb{A}_{\mathbb{Q},f})$.  (The equivalence is via $(gx, ghu)\sim (x, h) $ for all $g\in G(\IQ)^+$, $x\in X^+$, $h\in G\left(\adeles_{\IQ, f}\right)$, and $u\in U$.)

Let $\bf{i'}$ and $\bf{i''}$ be the points on the Hermitian symmetric domains for $GU(r,s)$ and $GU(r+1,s+1)$, respectively, defined by $\bf{\tilde{i}'}=\begin{pmatrix}i1_s\\0\end{pmatrix}$ and $\bf{\tilde{i}} = \begin{pmatrix}i1_{s+1}\\0\end{pmatrix}$. (Here, $0$ denotes the $(r-s)\times s$ or $(r-s)\times (s+1)$ $0$-matrix.) Let $GU(r,s)(\mathbb{R})^+$ be the subgroup of $GU(r,s)(\mathbb{R})$ whose similitude factor is positive.  Let $K_\infty^+$ and $K_\infty^{+,'}$ be the compact subgroups of $U(r+1,s+1)(\mathbb{R})$ and $U(r,s)(\mathbb{R})$, respectively, stabilizing $\bf{\tilde{i}}$ or $\bf{\tilde{i}'}$, respectively.  Let $K_\infty$  (resp. $K_{\infty}'$) \index{$K_{\infty}'$} be the groups generated by $K_\infty^+$ (resp. $K_\infty^{+,'}$)\index{$K_\infty^{+,'}$} and $\mathrm{diag}(1_{r+1},-1_{s+1})$ (resp. $\mathrm{diag}(1_r,-1_s)$).

  We also define an embedding of Hermitian domains
\begin{align}
X_{r+1,1}\times X_{0,r}&\hookrightarrow X_{r+1,r+1}\\
(z,x_0)&\hookrightarrow \begin{pmatrix}x&0\\y&\zeta\end{pmatrix}\label{rhstemp1}
\end{align}
for $z=\begin{pmatrix}x\\y\end{pmatrix}$. This embedding respects the actions of the unitary groups under the embedding $\gamma$.

\subsection{Automorphic forms}
We define a weight $\underline{k}=(a_1,\cdots,a_r;b_1,\ldots,b_s)$ for integers $a_1\geq\cdots\geq a_r\geq -b_1+r+s\geq\cdots\geq -b_s+r+s$.  (This convention follows \cite{Hsieh}.)  We define $||\underline{k}||=b_1+\cdots+b_s+a_{1}+\cdots+a_{r}$. By a scalar weight $\kappa$ for some $\kappa>r+1$ we mean the weight $(0,\ldots,0;\kappa,\ldots,\kappa)$.
With slight modifications, we will mainly follow \cite{Hsieh}, which in turn summarizes relevant portions of \cite{Shi97}, \cite{Shi00}, and \cite{Hida04}, to define the space of automorphic forms.
We define a cocycle $J: R_{F/\mathbb{Q}} G(\mathbb{R})^+\times X^+\rightarrow GL_r(\mathbb{C})\times GL_s(\mathbb{C}):=H(\mathbb{C})$ \index{$H$} by
$J(\alpha,\tau)=(\kappa(\alpha,\tau),\mu(\alpha,\tau))$, where for $\tau=\begin{pmatrix}x \\y \end{pmatrix}$ and
$\alpha=\begin{pmatrix}a&b&c\\g&e&f\\h&l&d\end{pmatrix}$,
$$\kappa(\alpha,\tau)=\begin{pmatrix}\bar{h}{}^t\!x+\bar{d}&\bar{h}{}^t\!y+l\bar{\theta}\\-\bar{\theta}^{-1}(\bar{g}{}^t\!x+\bar{f})&-\bar{\theta}^{-1}
\bar {g}{}^t\!y+\bar{\theta}^{-1}\bar{e}\bar{\theta}\end{pmatrix},\ \mu(\alpha,\tau)=hx+ly+d.$$

As in \cite{Hsieh}, we define some rational representations of $GL_r$. Let $R$ be a $\mathbb{Z}$-algebra. For a weight $\underline{k}=(a_1,\cdots,a_r;b_1,\ldots,b_s)$, we define the representation $L_{\underline{k}}(R)$ with minimal weight $-\underline{k}$ by to be the $R$-points of
\begin{align}\label{Lkdef}
L_{\underline{k}}=\{f\in\mathcal{O}_{\GL_r\times \GL_s}|f(tn_+g)=k^{-1}(t)f(g),t\in T_r\times T_s,n_+\in N_r\times {}^t\!N_s\},
\end{align}
where $T_r$ and $T_s$ denote maximal tori inside of the Borel subgroups $B_r$ and $B_s$, respectively.  The action $\rho_{\uk}$ on $L_{\underline{k}}$ is given by $\rho_{\uk}(g)(h) = f(hg).$  We define the functional $l_{\underline{k}}$ on $L_{\underline{k}}$ by evaluating at the identity. We also define the representation $L^{\underline{k}}(R)$ with highest weight $\underline{k}$ which has the same space as $L_{\underline{k}}$ but with the group action of $g\in GL_r$ given by $\rho^{\uk}(g) = \rho_{\underline{k}}({}^t\!g^{-1})$.  Note that $L_{\underline{k}}$ and $L^{\underline{k}}$ are dual to each other; we denote the natural pairing between them by $\langle, \rangle$.

\begin{definition}\label{odef}
Let $K$ be an open compact subgroup in $G(\mathbb{A}_{F,f})$.  The space $M_{\underline{k}}(K,\mathbb{C})$ of holomorphic modular forms of weight $\underline{k}$ is the space of holomorphic $L^{\underline{k}}(\mathbb{C})$-valued functions $f$ on $X^+\times G(\mathbb{A}_{\mathbb{Q},f})$ such that for all $\tau\in X^+$, $\alpha\in G(\mathbb{Q})^+$ and $u\in K$,
$$
f(\alpha\tau,\alpha gu)=\mu(\alpha)^{-\|\underline{k}\|}\rho^{\underline{k}}(J(\alpha,\tau))f(\tau,g).$$  When $r = s =1$, we also require a moderate growth condition at the cusps.
\end{definition}
We sometimes write $M^{(r,s)}_{\underline{k}}$ to emphasize the signature $(r, s)$ of the unitary group on which we are working.

\subsection{Shimura varieties and Igusa varieties}\label{2.5}
\subsubsection{Unitary Similitude Groups}
To each open compact subgroup $K = \prod_v K_v$ of $GU(r,s)(\mathbb{A}_f)$ whose $p$-component $K_p$ is $GU(r,s)(\mathbb{Z}_p)$, we attach a Shimura variety $S_G(K)$; we refer to \cite{Hsieh} for the definitions and details about arithmetic models of Shimura varieties.  The space $S_G(K)$ parametrizes quadruples $(A,\lambda, \iota, \bar{\eta}^{(\Box)})_{/S}$ where $\Box$ is a finite set of primes, $A$ is an abelian variety over some base ring $S$, $\lambda$ is an orbit of prime-to-$\Box$ polarizations of $A$, $\iota$ is an embedding of $\mathcal{O}_\mathcal{K}$ into the endomorphism ring of $A$ such that the action of $\iota(a)$ on $\mathrm{Lie}{A}$ has characteristic polynomial $(X-\bar{a})^r(X-a)^s$, and $\bar{\eta}^{(\Box)}$ is a prime-to-$\Box$ level structure of $A$.  Fix a coefficient ring $R$ and a quadruple $(A,\lambda, \iota, \bar{\eta}^{(\Box)})_{/\mathrm{Spec}R}$. We let $\omega_A=\Hom_R(\mathrm{Lie}A,R),$ with the action of $\mathcal{O}_\mathcal{K}$ given by $(x\cdot f)(m)=f(\iota(\bar{x})m)$ for $x\in \mathcal{O}_\mathcal{K}$ and $m\in\mathrm{Lie}A$.  (Note that this convention of Hsieh is different from that used by Shimura and Hida in the literature; the group denoted by $U(r,s)$ by Hsieh is the group denoted by $U(s,r)$ in the work of Shimura, Hida, and others.)
There is also a theory of compactifications of $S_G(K)$ developed in \cite{LAN}. We denote $\bar{S}_G(K)$ the toroidal compactification and $S^*_G(K)$ the minimal compactification.

  We define level groups at $p$ as in \cite[Section 1.10]{Hsieh}. Recall that $K=\prod_{v}K_v$ is such that $K_p=G(\mathbb{Z}_p)$.  If we write $g_p=\begin{pmatrix}A&B\\C&D\end{pmatrix}$ with $A$ an $r\times r$ matrix and $D$ an $s\times s$ matrix, then we define
\begin{align*}
K^n&=\{g\in K|g_p\equiv \begin{pmatrix}1_r&*\\0&1_s\end{pmatrix}\mathrm{mod}p^n\}\\
K^n_1&=\{g\in K|A\in N_r(\mathbb{Z}_p)\mathrm{mod}p^n, B\in { }^tN_s(\mathbb{Z}_p)\mathrm{mod} p^n, C\equiv0\mathrm{mod} p^n\}\\
K^n_0&=\{g\in K|A\in B_r(\mathbb{Z}_p)\mathrm{mod}p^n, B\in { }^tB_s(\mathbb{Z}_p)\mathrm{mod} p^n, C\equiv0\mathrm{mod} p^n\}.
\end{align*}

  Now we recall briefly the notion of Igusa varieties in \cite[Section 2]{Hsieh}, which summarizes earlier work of H. Hida. Let $V$ be the Hermitian space for $U(r,s)$, let $M$ be a standard lattice of $V$, and let $M_p=M\otimes_{\mathbb{Z}}\mathbb{Z}_p$. Let $\mathrm{Pol}_p=\{N^{-1}, N^0\}$ be a polarization of $M_p$. Recall that this is a polarization if $N^{-1}$ and $N^0$ are maximal isotropic submodules in $M_p$ and they are dual to each other with respect to the Hermitian metric on $V$ and also that:
$$\mathrm{rank} N_{v_0}^{-1}=\mathrm{rank}N_{\bar{v}_o}^0=r, \mathrm{rank}N_{\bar{v}_0}^{-1}=\mathrm{rank}N_{v_0}^0=s.$$
The Igusa variety of level $p^n$ is the scheme representing the usual quadruple for a Shimura variety (or its compactification) together with an embedding
$$j:\mu_{p^n}\otimes_\mathbb{Z}N^0\hookrightarrow A[p^n]$$
where $A$ is the abelian variety in the quadruple we use to define the arithmetic model of the Shimura variety. Note that the existence of $j$ implies that if $p$ is nilpotent in the base ring, then $A$ must be ordinary.  This is a Galois covering of the ordinary locus of the Shimura variety with Galois group $\mathbf{H}\simeq \mathrm{GL}_r\left(\ZZ_p\right)\times\mathrm{GL}_s\left(\ZZ_p\right)$.
We also denote $I_G(K_1^n)$ and $I_G(K_0^n)$ the Igusa varieties (over the toroidal compactification) with the corresponding level structures.

\subsubsection{Igusa Schemes for Unitary Groups}\label{Igusaunitary}

  Assume the tame level group $K$ is neat. For any $c$ an element in $\mathbb{Q}_+\backslash \mathbb{A}_{\mathbb{Q},f}^\times \nu(K)$ we refer to \cite[Section 2.5]{Hsieh} for a discussion of Igusa schemes and the ``$c$-Igusa schemes'' for the unitary groups $U(r,s)$ (not the similitude group).  They parameterize quintuples $(A,\lambda,\iota,\bar{\eta}^{(p)},j)_{/S}$ similar to the Igusa schemes for unitary similitude groups but requiring $\lambda$ to be a prime to $p$-polarization of $A$. If we write $I_1(K_1^n)$, $I_2(K_1^n)$ and $I_3(K_1^n)$ for the Igusa schemes of $U(r,1)$ ($U(0,r)$), $U(0,r)$ and $U(r+1,r+1)$ ($U(r,r)$), then there is a map of Igusa schemes:
$$i: I_1(K_1^{n})\times I_2(K_1^{n})\rightarrow I_3(K_3^{n}).$$
We refer to \cite[Section 2.6]{Hsieh} for details of this map as well as the corresponding version for $c$-Igusa scheme versions. Later on we will uses them for algebraic definitions for the pullback formulas for unitary and unitary similitude groups.

\subsection{Geometric Modular Forms}\label{geomforms-section}

  Let $H=GL_r\times GL_s$.  (Note that $H$ denotes $GL_r\times GL_s$, while $\mathbf{H}$ denotes the Galois group in Section \ref{2.5} that is isomorphic to $H\left(\ZZ_p\right)$.)  We define $\underline{\omega}=e_*\Omega_{\mathcal{G}/\bar{S}_G(K)}$ (where $e: \mathcal{G}\rightarrow \bar{S}_G(K)$ is the universal abelian scheme).  So $\underline{\omega}=e_{v_0}\underline{\omega}\oplus e_{\bar{v}_0}\underline{\omega}$, where $e_{v_0}$ and $e_{\bar{v}_0}$ denote the projectors onto the submodules on which each element $\alpha\in\mathcal{K}$ acts as multiplication by $\alpha$ or multiplication by $\bar{\alpha}$, respectively. We also define
$$\mathcal{E}^+:=\underline{\mathrm{Isom}}(\mathcal{O}^r_{\bar{S}_G(K)},e_{v_0}\underline{\omega}),$$
$$\mathcal{E}^-:=\underline{\mathrm{Isom}}(\mathcal{O}^s_{\bar{S}_G(K)},e_{\bar{v_0}}\underline{\omega}).$$
This is a $H$-torsor over $\bar{S}_G(K)$. We can define the automorphic sheaf $\omega_{\underline{k}}=\mathcal{E}\times^HL_{\underline{k}}$. A section $f$ of $\underline{\omega}_{\underline{k}}$ is a morphism $f:\mathcal{E}\rightarrow L_{\underline{k}}$ such that
$$f(x,h\boldsymbol{\omega})=\rho_{\underline{k}}(h)f(x,\boldsymbol{\omega}), h\in H.$$

  Now we consider automorphic forms on unitary groups in the adelic language.  The space of automorphic forms of weight $\underline{k}$ and level $K$ with central character $\chi$ consists of smooth and slowly increasing functions $F: G(\mathbb{A}_\mathbb{Q})\rightarrow L_{\underline{k}}(\mathbb{C})$ such that for every $(\alpha,k_\infty,u,z)\in G(\mathbb{Q})\times K_\infty^+\times K\times Z(\mathbb{A}_\mathbb{Q})$,
$$F(z\alpha gk_\infty u)=\rho_{\underline{k}}(J(k_\infty,\tilde{\bf i'})^{-1})F(g)\chi^{-1}(z).$$

\subsection{$p$-adic Automorphic Forms on Unitary Groups}
In this section, we recall the main features of $p$-adic automorphic forms, as discussed in \cite[Chapter 8]{Hida04}.  The reader is encouraged to consult \cite[Chapter 8]{Hida04} for more details.  Let $R$ be a $p$-adic $\ZZ_p$-algebra and let $R_m:=R/p^m$.  Let $S_m = S\times_R R_m$, where $S$ is the ordinary locus of the toroidal compactification of one of the Shimura varieties considered in Section \ref{2.5}.  Following the notation of \cite[Section 8.1.1]{Hida04}, we denote the Igusa variety of level $p^n$ over $S_m$ by $T_{m,n}$, and we define $V_{m, 0}=H^0\left(S_m, \mathcal{O}_m\right)$ and $V_{m,n} = H^0\left(T_{m,n}, \mathcal{O}_{T_{m,n}}\right)$.  So $V_{m, 0}\subseteq V_{m, 1}\subseteq \cdots \subseteq V_{m, n}$.  Also following the notation of \cite[Section 8.1.1]{Hida04}, we put
\begin{align*}
V_{m, \infty} &= \cup_n V_{m, n}\\
V&=\varprojlim_m V_{m, \infty}.
\end{align*}
With $N$ defined to be a unipotent radical of the Borel $B\subseteq \gl_r\left(\ZZ_p\right)\times \gl_s\left(\ZZ_p\right)$ as above, we define the space of $p$-adic modular forms $V_p\left(G, K\right)$ by
\begin{align*}
V_p\left(G, K\right):= V^N.
\end{align*}
(Beware that in the later subsections of \cite[Chapter 8]{Hida04}, the space $V_p(G, K)$ is denoted simply by $V$, even though that space is actually $V^N$, as explained at the beginning of \cite[Section 8.3.2]{Hida04}.)

Before proceeding further, we highlight some facts about rational representations and vector bundles, which we will use in the remainder of the discussion in this section.  Our presentation here follows the conventions of \cite[Section 8.1.2]{Hida04}, which, in turn, refers the reader to \cite[Section I.2]{RAG} and \cite[Section 1.6.5]{GME} for more details.  Given a ring or a sheaf of rings $A$ over a scheme, we define
\begin{align*}
R_A[\uk] = \left\{f: \gl_n/N\rightarrow\mathbb{A}^1| f(ht) = \kappa_{\uk}f(h)\right\},
\end{align*}
where $\kappa_{\uk}$ is the character on the torus corresponding to the weight given by the ordered tuple $\uk$.  As explained on \cite[p. 332]{Hida04}, there is a (unique, up to a $A$-unit multiple) $N$-invariant linear form $\ell_{\can}: R_A[\kappa]\rightarrow A$, which generates $\left(R_A[\kappa]^*\right)^N$.  Note that we can normalize $\ell_{\can}$ so that
\begin{align*}
\ell_{\can}(\phi) = \phi\left(1_n\right)
\end{align*}
for all $\phi\in R_A[\kappa]$.  (Note that $1_n$ denotes the identity matrix in $GL_n/N$.)  When $A$ is a $p$-adic ring, we denote by $\mathcal{C}\left(\gl_n(\ZZ_p)/N(\ZZ_p), A\right)$ the space of $p$-adically continuous $A$-valued functions, and we denote by $\mathcal{LC}\left(\gl_n(\ZZ_p)/N(\ZZ_p), A\right)$ the space of locally constant $A$-valued functions.  Given a ring $A$ of functions on $\gl_n\left(\ZZ_p\right)/N\left(\ZZ_p\right)$, we write $A[\uk]$ to denote the $\kappa_{\uk}$-eigenspace under right multiplication $g\mapsto gt$ by $t\in T(\ZZ_p)$.  Note that for any $p$-adic ring $A$, there is a canonical map
\begin{align*}
R_A[\uk]\hookrightarrow \mathcal{C}\left(\gl_n\left(\ZZ_p\right)/N\left(\ZZ_p\right), A\right)[\uk],
\end{align*}
 and when $A$ is finite $\mathcal{C}\left(\gl_n(\ZZ_p)/N(\ZZ_p), A\right) = \mathcal{LC}\left(\gl_n(\ZZ_p)/N(\ZZ_p), A\right)$.
(Note that the modules $R_A[\uk]$ and $R_A[\uk]^\vee$ are the same as the modules $L_{\uk}$ and $L^{\uk}$, respectively, but in order to make the connection with \cite{Hida04} transparent, we adhere to Hida's notation - rather than the conventions in \cite{Hsieh} -  in this section.)

Let $\uo_m$ denote the pullback of $\uo$ to $S_m$, and let $\uo_{m, \underline{k}}$ denote the pullback of $\uo_{\underline{k}}$ to $S_m$.  As explained on \cite[p. 334]{Hida04}, there is a canonical map
\begin{align*}
 \omega_{\can}^{\uk}: H^0\left(S_m, \uo_{m, \uk}\right)\rightarrow H^0\left(T_{m,m}, R_{T_{m,m}}[\uk]\right).
\end{align*}
  As explained on \cite[p. 335]{Hida04}, there is a natural map
\begin{align*}
\beta: H^0\left(S_m, \uo\right)\rightarrow V_{m, \infty}^N[\underline{k}],
\end{align*}
where $V_{m, \infty}^N[\underline{k}]$ denotes the $\underline{k}$-eigenspace for the action of $T$ on $V_{m, \infty}^N$; and if $m=\infty$, then $\beta$ is injective.  Moreover, letting $R_m' = \oplus_{\underline{k}>>0} H^0\left(S_m, \uo_m\right)$ (where $>>$ means sufficiently regular, in the sense of \cite[Section 5.1.3]{Hida04}), there is a map
\begin{align*}
\beta(m): R_m'\rightarrow V_{m,m}^N,
\end{align*}
for $1\leq m< \infty$, defined by
\begin{align*}
\beta(m)\left(\sum_{\underline{k}>>0}f_{\underline{k}}\right) = \left\{(X/T_{m,m}, j)\mapsto \sum_{\underline{k}}\ell_{\can}\left(\omega_{\can}^{\uk}\left(f_{\uk}\left(X, j\right)\right)\right)\right\}.
\end{align*}
Continuing to follow the notation of \cite[Section 8.1.4]{Hida04}, we note that by taking projective limits, we obtain a map
\begin{align*}
\beta(\infty): R_\infty'\rightarrow V^N = \varprojlim_m V_{m, \infty}^N.
\end{align*}
Let $V_{\infty,m}=\varinjlim_nV_{n,m}$ and $V_{\infty,\infty}=\varprojlim_mV_{\infty, m}$.  When we need to be precise about the signature $(r, s)$ of the unitary group with which we are working, we write $V_{\infty,\infty}^{(r, s)}$.
Define $V_p(G,K):=V_{\infty,\infty}^N$ to be the space of $p$-adic modular forms.
As explained on \cite[p. 336]{Hida04}, the map $\beta(\infty)$, in turn, induces a map
\begin{align*}
\beta = \beta_{\underline{k}}: H^0\left(S, \uo_{\underline{k}}\right)\hookrightarrow \varprojlim_mH^0\left(S_m, \uo_{\underline{k}}\right)\rightarrow V^N[\underline{k}].
\end{align*}
Furthermore, as explained in \cite[Theorem 8.13(1)]{Hida04}, if $M$ denotes $Sh_K^{(p)}\left(GU, X\right)/\ZZ_p$ and $R= \ZZ_p$ or $R =\IQ_p/\ZZ_p$, then there is a canonical inclusion
\begin{align*}
\beta: \oplus_{\uk}H^0\left(M/\ZZ_p, \uo_{\uk}\otimes_{\ZZ_p}R\right)\hookrightarrow V\otimes_{\ZZ_p}R.
\end{align*}
As explained in \cite[Theorem 8.14(1)]{Hida04}, if the signature of $G$ is $(n,n)$ for some integer $n$ and $M$ denotes the toroidal compactification of $Sh_K^{(p)}\left(GU, X\right)/\ZZ_p$, then
there is an inclusion
\begin{align*}
\beta: \oplus_{\uk}H^0\left(M/\ZZ_p, \uo_{\uk}\otimes_{\ZZ_p} R\right)\hookrightarrow V\otimes R
\end{align*}
(for $R = \ZZ_p$ or $R = \IQ_p/\ZZ_p$) such that (in terms of $q$-expansions, the topic of the next section)
\begin{align*}
\beta(f)(q) = \ell_{can}(f)(q).
\end{align*}

\subsection{Fourier-Jacobi Expansions}\label{FJexpns-section}
\subsubsection{Analytic Fourier-Jacobi Expansions}
We are now going to describe the Fourier-Jacobi expansion for vector-valued automorphic forms on $U(r+1, 1)$. This will only be used for studying the constant terms of these automorphic forms.  As in \cite{Hsieh2}, for any holomorphic automorphic form $f$ on $U(r+1,1)$ we can express the analytic Fourier-Jacobi expansion of $f$ at the standard maximal parabolic $P$ by
$$FJ_P(g,f)=a_0(g,f)+\sum a_\beta(y,g,f)q^\beta.$$
  Here each function $a_\beta(-,g,f):\mathbb{C}^r\rightarrow \mathbb{C}$ is a theta function with complex multiplication by $\mathcal{K}$.

There is also an algebraic theory of Fourier-Jacobi expansions, which is discussed in \cite[Section 3]{Hsieh2} and \cite{LAN}. Let $N_H^1\subset H$ be the set of elements of the form $\begin{pmatrix}1&0\\*&1_r\end{pmatrix}\times\{1\}$. Due to the ambiguity of the choice of standard basis for differentials of Mumford objects, the Fourier-Jacobi expansion is only defined up to translation by elements in $N_H^1$. Let $(g,h)\in C(K)\times \mathbf{H}$ be a $p$-adic cusp. Let $g_1\in G_P(\mathbb{A}_f)$ and $[g_1]$ be the class of $g_1$ in $I_{[g]}^n(\mathbb{C})$ and let $\varphi_{[g_1]}$ be the map from the space of $p$-adic modular forms on $U(r,0)$ to its value at $g_1$. The following comparison between analytic and algebraic Fourier-Jacobi expansions is the main result of \cite{lanalgan} and is recalled in \cite[Equation (3.12)]{Hsieh2}:
$$FJ_P(g_1hg,f)_{N_H^1(\mathbb{C})}=\rho_{\underline{k}}(\mathrm{diag}(1,\Omega_\infty,1))\varphi_{[g_1]}(FJ_{[g]}^h(f))_{N_H^1(\mathbb{C})}.$$

\subsubsection{Algebraic Theory for Fourier-Jacobi expansions}\label{algthy-section}

Now we specialize to the group $GU(r+1,1)$ and define the set of cusps and $p$-adic cusps.

\paragraph{Cusps}
Given an open compact subgroup $K$ as above, we define the set of cusp labels by:
$$C(K):=(GL(X_\mathcal{K})\times G_P(\mathbb{A}_f))N_P(\mathbb{A}_f)\backslash G(\mathbb{A}_f)/K.$$
This is a finite set. We write $[g]$ for the class represented by $g\in G(\mathbb{A}_f)$. For each such $g$ whose $p$-component is $1$ we define $K_P^g=G_P(\mathbb{A}_f)\cap gKg^{-1}$ and denote by $S_{[g]}:=S_{G_P}(K_P^g)$ the corresponding Shimura variety for the group $G_P$ with level group $K_P^g$. By strong approximation, we can choose a set $\underline{C}(K)$ of representatives of $C(K)$ consisting of elements $g=pk^0$ for $p\in P(\mathbb{A}_f^{(pN_0)})$ and $k^0\in K^0$ for $K^0$ the maximal compact subgroup.

\paragraph{$p$-adic Cusps}

  As in \cite{Hsieh}, each pair $(g,h)\in C(K)\times\mathbf{H}$ can be regarded as a $p$-adic cusp, i.e. a cusp of the Igusa tower.

  Let $W$ be the Hermitian space for $GU(r,0)$. Recall that $P$ is a maximal parabolic of $G$ with levi $\begin{pmatrix}a&&\\&g&\\&&\bar{a}^{-1}\mu(g)\end{pmatrix}$ and $g\in GU(W)$ and we write $G_P$ for $GU(W)$.

  We can also define Shimura varieties and Igusa towers for the group $G_P$. Recall $K_P^g=G_P(\mathbb{A}_f)\cap gKg^{-1}$. Write $S_{[g]}:=S_{G_P}(K_P^g)$. One defines a group scheme $\mathcal{Z}_{[g]}$ over $S_{[g]}$ using the universal abelian variety as in \cite[Section 2.7]{Hsieh} and denote $\mathcal{Z}_{[g]}^\circ$ the connected component. In our case this is an abelian variety over $S_{G_P}(K_P^g)$. For $\bullet=0,1,\emptyset$ let $K_{P,\bullet}^{g,n}:=gK_\bullet^n g^{-1}\cap G_P(\mathbb{A}_f)$ and $I_{[g]}(K_\bullet^n):=I_{G_P}(K_{P,\bullet}^{g,n})$ be the associated Igusa scheme over $S_{[g]}$. These schemes are affine. Write $A_{[g]}$, $A_{[g]}^n$ for be the coordinate rings for $S_{[g]}$ and $I_{[g]}(K_1^n)$.
In fact we are mainly interested in forms with this level $K_1^n$.  We use $\Lambda$-adic Fourier-Jacobi expansions to study these forms. In particular, the Klingen Eisenstein series we construct are of this level.

  At a cusp $[g]$, one defines a Mumford family $(\mathcal{M}, \iota_\mathcal{M})$; see \cite[Section 2.7]{Hsieh} (or \cite{Eischen} or \cite{Hida04}) for more details. We also fix a $p$-power level structure $j_\mathcal{M}$ as in \cite{Hsieh}.  We define $\mathcal{R}_{[g],\infty}$ to be the ring consisting of formal $q$-expansions $\sum_{\beta\in \mathscr{S}_{[g]}}a_{[g]}^h(\beta)q^\beta$ with
$$a_{[g]}^h(\beta)\in (V_{\infty,\infty}^{(r,0),N})\otimes_{S_{[g]}} H^0(\mathcal{Z}_{[g]}^\circ,\mathcal{L}(\beta)).$$
(This ring is not quite the one defined in \cite[Section 2.7]{Hsieh}.  It is easy to see, however, that this ring contains the rings considered there; and if we want to study $q$-expansions of $p$-adic modular forms of coefficients not necessarily of $p$-power torsion, then we need our ring $\mathcal{R}_{[g],\infty}$.)  One obtains the Fourier-Jacobi expansion for a modular form by evaluating the form at the Mumford family $(\mathcal{M},\omega_\mathcal{M})$ over the base ring $\mathcal{R}_{[g],\infty}$.  (Here, the basis of the differential $\omega_\mathcal{M}$ is not canonical; thus, if our form is not of scalar weight then the Fourier-Jacobi expansion takes values in the orbit of a vector instead of a vector.  For details see \cite[Section 2.7]{Hsieh}.)   Now we recall the Fourier-Jacobi expansion for $p$-adic modular forms following \cite[Section 3.6.2]{Hsieh}. Any $(g,h)\in C(K)\times \mathbf{H}$ can be regarded as a cusp on the Igusa tower. In \emph{loc.cit}, Hsieh defines a subgroup $\mathscr{S}_{[g]}$ of $\mathbb{Q}^+$ (corresponding to $K_1^n$ there).

  For any $f\in V_{\infty,\infty}^{(r+1,1),N}$, the $p$-adic Fourier-Jacobi expansion at the cusp $(g,h)$ is given by
$$\hat{FJ}_{[g]}^h(f):=f(\underline{\mathcal{M}}_{[g]},h^{-1}j_{\mathcal{M}})=\sum_{\beta\in\mathscr{S}_{[g]}}\hat{\mathbf{a}}_{[g]}^h(\beta,f)q^\beta
\in \mathcal{R}_{[g],\infty}.$$
\begin{remark}
The use of ``$\hat{FJ},$'' ``$\hat{M}$'', etc. in the $p$-adic context - while non-standard - follows the convention of Hsieh.  (So note that $\hat{ }$ means to consider the $p$-adic realization of an object or space; it does not denote a Fourier transform.)
\end{remark}

\subsubsection{Siegel Operators}

  We have a Siegel $\Phi$ operator defined by taking the term where $\beta=0$.
The Siegel operator $\Phi_{P, g}$ can be defined analytically as follows. For any $g\in G(\mathbb{A}_\mathbb{Q})$ we define
$$\Phi_{P,g}(f)=\int_{N_P(\mathbb{Q})\backslash N_P(\mathbb{A}_\mathbb{Q})}f\left(n\cdot\left({\bf \tilde{i}},g\right)\right)dn.$$

\subsection{Inner Products of Automorphic Forms on Definite Unitary Groups}\label{2.12}
Recall from Section \ref{geomforms-section} that we have a natural $\mathrm{GL}_r$-equivariant pairing $$\langle,\rangle: L_{\underline{k}}(R)\times L^{\underline{k}}(R)\rightarrow R.$$  There is a natural pairing $M_{\underline{k}}^{(r,0)}(K,R)\times M_{-\underline{k}}^{(0,r)}(K,R)\rightarrow R$ given by
$$\langle f,h\rangle :=\int_{U(\mathbb{Q})\backslash U(\mathbb{A}_\mathbb{Q})}\langle f(g),h(g)\rangle dg.$$
(We identify $U(r,0)$ with $U(0,r)$. Note that due to the different definitions of the automorphy factors for $U(r,0)$ and $U(0,r)$, the infinity types for the two spaces are in fact dual to each other.)\\

  There is a pairing between $p$-adic automorphic forms which we are going to describe below.  First, though, we give another description of $p$-adic automorphic forms on definite unitary groups ($U(r,0)$ or $U(0,r)$). Let $\hat{M}_{\underline{k}}^{(r,0)}(K,R)$ be the space of functions $f:GU(r,0)(\mathbb{A}_f)\rightarrow L_{\underline{k}}(R)$ such that
$$f(\alpha gk)=\rho_{\underline{k}}(k_p^{-1})f(g), \alpha\in G(\mathbb{Q}),k\in K.$$
Then if $n\geq m$ we have $\hat{M}_{\underline{k}}(K_1^n,R_m)=V_{\underline{k}}(K_1^n,R_m)$. If $p$ is invertible in $R$ then we have an isomorphism
$$M_{\underline{k}}^{(r,0)}(K,R)\simeq \hat{M}_{\underline{k}}^{(r,0)}(K,R)$$
$$f\mapsto \hat{f},\hat{f}(g):=\rho_{\underline{k}}(g_p^{-1})f(g).$$
This means the $\hat{M}$ can be viewed of as the image of classical forms of weight $\underline{k}$ in the space of $p$-adic automorphic forms on weight $\underline{k}$. We have similar definitions for the group $GU(0,r)$.
We define the pairing between $p$-adic automorphic forms on $GU(r,0)$ and $GU(0,r)$ by
$$\langle \hat{f},\hat{h}\rangle :=\mathrm{Vol}(K_0)^{-1}\sum_{x\in U(\mathbb{Q})\backslash U(\mathbb{A}_f)/K_0} \langle \hat{f}(x),\hat{h}(x)\rangle.$$
Where $K_0$ is the compact open subgroup of $U(\mathbb{A}_f)$ which is $K_0(p)$ at $p$ and equals the local component for $K$ at all other primes.
One sees that if $f$ and $\hat{f}$, $h$ and $\hat{h}$ correspond to each other in the above sense, then
$$\langle f,h\rangle =\langle \hat{f},\hat{h}\rangle .$$
\begin{remark}
If we identify the group $U(r,0)$ with $U(0,r)$ then an automorphic form for $U(r,0)$ of weight $(a_1,\ldots,a_r)$ is a form of weight $(-a_r,\ldots,-a_1)$ for $U(0,r)$.
\end{remark}
\subsection{Hecke Operators at $p$}\label{hecke-section}
\begin{definition}\label{defkappai}
Let $n=r+s$ and $\underline{k}=(a_1,\cdots,a_r;b_1,\ldots,b_s)$ be a weight. We define the set $\{\kappa_1,\ldots,\kappa_{r+s}\}$ (such that $\kappa_1\geq \cdots\geq \kappa_{r+s}$) to be
$$\left\{b_s+s-1-\frac{n}{2}+\frac{1}{2},b_{s-1}+s-2-\frac{n}{2}+\frac{1}{2},\ldots,b_1-\frac{n}{2}+\frac{1}{2},
-a_r+r+s-1-\frac{n}{2}+\frac{1}{2},\ldots,-a_1+s-\frac{n}{2}+\frac{1}{2}\right\}.$$
\end{definition}

We now briefly introduce some Hecke operators.  (For more details on Hecke operators, see \cite[Section 8.1.6]{Hida04}.)
Let $\mathcal{A}_p:=\mathbb{Z}_p[t_1,t_2,\ldots,t_n,t_n^{-1}]
$ be the Atkin-Lehner ring of $G(\mathbb{Q}_p)$, where $t_i$ is defined by $t_i=N(\mathbb{Z}_p)\alpha_i
N(\mathbb{Z}_p)$, $\alpha_i=\begin{pmatrix}1_{n-i}&\\&p 1_i\end{pmatrix}$.  Writing $N\alpha_i N = \sqcup_u N \alpha_{i, u}$, we have that $t_i$ acts on $\pi^{N(\mathbb{Z}_p)}$ by $$v|t_i=\sum_{u}\alpha_{i, u}^{-1}v.$$
We also define a normalized action with respect to the weight $\underline{k}$ following (\cite[Chapter 8]{Hida04}):
$$v\|t_i:=\delta_B(\alpha_i)^{-1/2}p^{\kappa_1+\ldots+\kappa_i}v|t_i,$$
where $\delta_B$ denotes the modulus character for the Borel $B$.  We denote $||_{t_i}$ as $U_{t_i}$ and define the Hecke operator $U_{t^+}$ to be the composition of the $||_{t_i}$'s above.

\subsection{Differential Operators}
Let $S/T$ be either the Igusa or Shimura variety, and let $A/S$ be the universal Abelian variety.

\subsubsection{The Gauss Manin connection}
Let $\pi:X\rightarrow S$ be a smooth proper morphism of schemes, and let $S$ be a smooth scheme over a scheme $T$. Then the Gauss-Manin connection is a map:
$$\nabla:H_{DR}^q(X/S)\rightarrow H_{DR}^q(X/S).$$
By using the chain rule, we can also define $\nabla:T^\bullet(H_{DR}^{1\pm}(A/S))\rightarrow T^\bullet (H_{DR}^{1\pm}(A/S))\otimes \Omega^1_{S/T}$.  (Here, $T^\bullet$ denotes $T^{\otimes k}$ for some nonnegative integer $k$, and like in \cite{Eischen}, $H_{DR}^{1+}$ (resp. $H_{DR}^{1-}$) denotes the submodule on which $\alpha\mathcal{K}$ acts via multiplication by $\alpha$ (resp. $\bar{\alpha}$).)

\subsubsection{Algebraic Differential Operators}

  As explained in \cite[Section 7]{Eischen}, there is a purely algebraic differential operator
\begin{align*}
D^\rho_{A/S}: H^1_{DR}(A/S)^\rho\otimes T^\bullet\left(H^+\left(A/S\right)\otimes H^-\left(A/S\right)\right)\rightarrow H^1_{DR}\left(A/S\right)^\rho\otimes T^{\bullet+1}\left(H^+\left(A/S\right)\otimes H^-\left(A/S\right)\right),
\end{align*}
which is constructed from the Gauss-Manin connection and the Kodaira-Spencer morphism.  This operator is a generalization to the case of automorphic forms on unitary groups of signature $(n,n)$ of the differential operators constructed for Hilbert modular forms in \cite{Katz78}.

\subsubsection{$C^\infty$ Differential Operators}

 Over $\IC$, there is a canonical splitting
$$H_{DR}^1(C^\infty)=\underline{\omega}(C^\infty)\oplus \mathrm{Split}(C^\infty)$$
of the Hodge filtration corresponding to the holomorphic and anti-holomorphic one-forms. Let $\rho=\rho_-\otimes\rho_+$ be a representation of $GL_n\times GL_n$ which is a quotient of $\rho_{st}^{\otimes d_1}\otimes \rho_{st}^{\otimes d_2}$.  There is a $C^\infty$-differential operator
\begin{align*}
\partial(\rho,C^\infty,d): \left(\uo^-\right)^{\rho_-}\otimes\left(\uo^+\right)^{\rho_+}\rightarrow \left(\uo^-\right)^{\rho_-}\otimes\left(\uo^+\right)^{\rho_+}\otimes\left(\uo^+\otimes\uo^-\right)^{\otimes d}.
\end{align*}
 defined in \cite[Section 8]{Eischen} as the composition
\begin{align*}
&(\underline{\omega}^-)^{\rho_-}\otimes(\underline{\omega}^+)^{\rho_+}\hookrightarrow (H_{DR}^1(\mathcal{A}/S))\rightarrow (H_{DR}^1(\mathcal{A}/S))^{\mathrm{Ind}_P^G\rho}\otimes T^d(H^+(\mathcal{A}/S)\otimes H^-(\mathcal{A}/S))&\\&\rightarrow (\underline{\omega}^-)^{\rho_-}\otimes (\underline{\omega}^+)^{\rho_+}\otimes(\underline{\omega}^+\otimes \underline{\omega}^-)^d,&
\end{align*}
where the first arrow is the canonical inclusion, the second is $(D^\rho)^d$, and the third is $\mod\mathrm{Split}(C^\infty)$.

\subsubsection{$p$-adic Differential Operators}

  Now let $S$ be the Igusa variety over a $p$-adic ring. Since it classifies ordinary abelian varieties with additional structures there is a unit root splitting:
$$H_{DR}^1(\mathcal{A}/S)=\underline{\omega}\oplus\underline{U}$$
where $\underline{U}$ is the unit root subspace. We can define a $p$-adic differential operator $\partial(\rho,p-\mathrm{adic},d)$
$$(\underline{\omega}^-)^{\rho_-}\otimes(\underline{\omega}^+)^{\rho_+}\rightarrow (\underline{\omega}^-)^{\rho_-}\otimes(\underline{\omega}^+)^{\rho_+}\otimes(\underline{\omega}^+\otimes \underline{\omega}^-)^d.$$
similarly to how we defined the $C^\infty$ differential operator, but with $\mathrm{Split}(C^{\infty})$ replaced by $\underline{U}$.  Details are discussed in \cite[Section 9]{Eischen}.

\section{Eisenstein Series and Pullback Formulas}\label{EseriesPullbkFormulas-section}

\subsection{Klingen Eisenstein Series}
Recall that in Equation \eqref{GU-defn}, we defined $GU=GU(r,s)$. Let $\mathfrak{g}\mathfrak{u}(\mathbb{R})$ be the Lie algebra of $GU(r,s)(\mathbb{R})$. We will often use the notation $a,b$ for $a=r-s$ and $b=s$.

\subsubsection{Archimedean Picture}

Now let $(\pi,H)$ be a unitary Hilbert representation of $GU(\mathbb{R})$ with $H_\infty$ the space of smooth vectors. We define a representation of $P(\mathbb{R})$ on $H_\infty$ as follows: for $p=mn,n\in N_P(\mathbb{R}),m=m(g,a)\in M_P(\mathbb{R})$ with $a\in \mathbb{C}^\times$ and $g\in GU(\mathbb{R})$ (with the notation $m(g, a)$ as in Equation \eqref{r1-embedding}) put
\begin{align}
\rho(p)v:=\tau(a)\pi(g)v,v\in H_\infty.\label{rhopi}
\end{align}
(Note that when we work in the global setting below, we will denote $\rho$ from Equation \eqref{rhopi} by $\rho_w$ with $w$ an archimedean place.)  We define a representation by smooth induction $I(H_\infty):=\mathrm{Ind}_{P(\mathbb{R})}^{GU(r+1,s+1)(\mathbb{R})}\rho$ and denote by $I(\rho)$ the space of $K_\infty$-finite vectors in $I(H_\infty)$.\footnote{We shall typically use the notation $I(\rho)$ for induced representations in the context of Klingen Eisenstein series, in contrast to the notation $I_n(\tau)$ that we use for induced representations in the context of Siegel Eisenstein series.}  We also define for each $s\in \mathbb{C}$ a {\it Klingen section}
$$f_s(g):=\delta(m)^{\frac{a+2b+1}{2}+s}\rho(m)f(k),g=mk\in P(\mathbb{R})K_\infty,$$
\index{$\delta(m)$} where $\delta$ is such that $\delta^{a+2b+1}=\delta_P$ for $\delta_P$ \index{$\delta_P$} the modulus character of $P$.  We define an action of $GU(r+1,s+1)(\mathbb{R})$ on $I\left(H_\infty\right)$ by
$$(\sigma(\rho,s)(g)f)(k):=f(kg).$$
  Let $(\pi^\vee,V)$ \index{$\pi^\vee$} be the irreducible $(\mathfrak{gu}(\mathbb{R}),K_\infty')$-module given by $\pi^\vee(x)=\pi(\eta^{-1}x\eta)$ for $\eta=\begin{pmatrix}&&1_b\\&1_a&\\-1_b&&\end{pmatrix}$ and
$x$ in $\mathfrak{gu}(\mathbb{R})$ or $K_\infty'$, and denote $\rho^\vee,I(\rho^\vee), I^\vee(H_\infty)$ \index{$\rho^\vee$} and $\sigma(\rho^\vee,s),I(\rho^\vee))$ the representations and spaces defined as above but with $\pi$ and $\tau$ replaced by $\pi^\vee\otimes(\tau\circ \det )$ and $\bar{\tau}^c$, respectively.  We are going to define an intertwining operator. Let
\begin{align}\label{wdefn}
w=\begin{pmatrix}&&1_{b+1}\\&1_a&\\-1_{b+1}&&\end{pmatrix}.
\end{align}
For any $s\in\mathbb{C}$, $f\in I(H_\infty)$, and $k\in K_\infty$, consider the integral
\begin{equation}
A(\rho,s,f)(k):=\int_{N_P(\mathbb{R})}f_s(wnk)dn.
\end{equation}
This is absolutely convergent when $Re(s)>\frac{a+2b+1}{2}$ and $A(\rho,s,-)\in \Hom_{\IC}(I(H_\infty),I^\vee(H_\infty))$ intertwines the actions of $\sigma(\rho,s)$ and $\sigma(\rho^\vee,-s)$.

\subsubsection{Non-Archimedean Picture}

  Our discussion here follows \cite[9.1.2]{SU}. Let $(\pi,V)$ be an irreducible, admissible representation of
$GU(\mathbb{Q}_v)$ which is unitary and tempered. Let $\psi$ and $\tau$ be
unitary characters of $\mathcal{K}_v^\times$ such that
$\psi$ is the central character for $\pi$. We define a representation $\rho$ of $P(\mathbb{Q}_v)$ as follows. For $p=mn,n\in N_P(\mathbb{Q}_v)$,
$m=m(g,a)\in M_P(F_v), a\in K_v^\times,g\in GU(\mathbb{Q}_v)$ let
\begin{align}
\rho(p)v:=\tau(a)\pi(g)v,v\in V.\label{rhopi-nonarch}
\end{align}
(Note that when we work in the global setting below, we will denote $\rho$ from Equation \eqref{rhopi-nonarch} by $\rho_w$ with $w$ a non-archimedean place.)
Let $I(\rho)$ be the representation defined by admissible induction:
$I(\rho)=\mathrm{Ind}_{P(\mathbb{Q}_v)}^{GU(r+1,s+1)(\mathbb{Q}_v)}\rho$.
As in the Archimedean case, for each $f\in I(\rho)$ and each
$s\in \mathbb{C}$ we define a function $f_s$ on $GU(\mathbb{Q}_v)$ by
$$f_s(g):=\delta(m)^{\frac{a+2b+1}{2}+s}\rho(m)f(k),g=mk\in P(\mathbb{Q}_v)K_v$$
and a representation $\sigma(\rho,s)$ of $GU(r+1,s+1)(F_v)$ on $I(\rho)$
by
$$(\sigma(\rho,s)(g)f)(k):=f_s(kg).$$

  Let $(\pi^\vee, V)$ be given by $\pi^\vee(g)=\pi(\eta^{-1}g\eta).$
This representation is also tempered and unitary. We denote by
$\rho^\vee,I(\rho^\vee)$, and $(\sigma(\rho^\vee,s),I(\rho^\vee))$
the representations and spaces defined as above but with $\pi$
and $\tau$ replaced by $\pi^\vee\otimes(\tau\circ
\det)$, and $\bar \tau^c$, respectively.

  For $f\in I(\rho),k\in
K_v$, and $s\in \mathbb{C}$ consider the integral
\begin{equation}
A(\rho,s,f)(k):=\int_{N_P(F_v)}f_s(wnk)dn.
\end{equation}
As a consequence of our hypotheses on $\pi$, this integral converges
absolutely and uniformly for $s$ and $k$ in compact subsets of
$\{s:Re(s)>(a+2b+1)/2\}\times K_v$. Moreover, for such $s$,
$A(\rho,s,f)\in I(\rho^\vee)$ and the operator $A(\rho,s,-)\in
\Hom_\mathbb{C}(I(\rho),I(\rho^\vee))$ intertwines the actions of
$\sigma(\rho,s)$ and $\sigma(\rho^\vee,-s).$

For each open subgroup $U\subseteq K_v$, let $I(\rho)^U\subseteq
I(\rho)$ be the finite-dimensional subspace consisting of functions
satisfying $f(ku)=f(k)$ for all $u\in U$. Then the function
\begin{align*}
\left\{s\in
\mathbb{C}:Re(s)>(a+2b+1)/2\right\}&\rightarrow \Hom_\mathbb{C}(I(\rho)^U,
I(\rho^\vee)^U)\\ s&\mapsto A(\rho,s,-)
\end{align*}
 is holomorphic. This map
has a meromorphic continuation to all of $\mathbb{C}$.

  Note that when $\pi$ and $\tau$ are unramified, there is a unique (up to scalar) unramified vector $F_v^{\sph}\in I(\rho)$. \index{$F_{\rho_v}$}

\subsubsection{Global Picture}

  We follow \cite[9.1.4]{SU} for this part. Let $(\pi,V)$ be an irreducible cuspidal tempered automorphic representation of $GU(\mathbb{A}_\mathbb{Q})$. It is an admissible $(\mathfrak{gu}(\mathbb{R}),K_\infty')\times GU(\mathbb{A}_f)$-module which is a restricted tensor product of local irreducible admissible representations. Let $\psi, \tau:\mathbb{A}_\mathcal{K}^\times \rightarrow \mathbb{C}^\times$ be Hecke characters such that $\psi$ is the central character of $\pi$. Let $\tau=\otimes \tau_w$ and $\psi=\otimes \psi_w$ be their local decompositions, $w$ running over places of $\mathbb{Q}$. Define a representation $\rho$ of $(P(\mathbb{Q}_\infty)\cap K_\infty)\times P(\mathbb{A}_{\mathbb{Q},f})$ by
 $$\rho(p)v:=\otimes(\rho_w(p_w)v_w),$$
where $\rho_w$ denotes the representation at the place $w$ that was denoted above simply by $\rho$ and, similarly, $v_w$ denotes a vector that was denoted above simply by $v$.  Let $I(\rho)$ be the restricted product $\otimes' I(\rho_w)$ with respect to those $w$ at which
 $\tau_w,\psi_w,\pi_w$ are unramified.  For each $s\in \mathbb{C}$ and $f\in I(\rho)$, we define a function $f_s$ on $GU(r+1,s+1)(\mathbb{A})$ by
 $$f_s(g):=\otimes f_{w,s}(g_w)$$
 where each section $f_{w,s}$ is the local section at $w$ denoted simply by $f_s$ earlier, and we define an action $\sigma(\rho,s)$ of $(\mathfrak{gu},K_{\infty})\otimes GU(r+1,s+1)(\mathbb{A}_f)$ on $I(\rho)$ by $\sigma(\rho,s):=\otimes\sigma(\rho_w,s).$  We write $I(\rho, s)$ for the space of all such $f_s$.
 Similarly, we define $\rho^\vee,I(\rho^\vee)$, and $\sigma(\rho^\vee,s)$ the representations and spaces defined as above but with $\pi$ and $\tau$ replaced by $\pi^\vee\otimes(\tau\circ \det )$ and $\bar{\tau}^c$, respectively.

\subsubsection{Klingen Eisenstein Series}

  Let $\pi,\psi,$ and $\tau$ be as in the above subsection. For $f\in I(\rho),s\in
 \mathbb{C}$, there are maps from
$I(\rho)$ and $I(\rho^\vee)$ to spaces of automorphic forms on $P(\mathbb{A}_\IQ)$ given by
$$f\mapsto (g\mapsto f_s(g)(1)).$$
In the following we often write $f_s$ for the automorphic form given by this recipe.

If $g\in GU(r+1,s+1)(\mathbb{A}_\IQ)$ it is well known that
\begin{equation}
E(f,s,g):=\sum_{\gamma\in P(\IQ)\setminus G(\IQ)} f_s(\gamma g)
\end{equation}
converges absolutely and uniformly for $(s,g)$ in compact subsets of $\{s\in\mathbb{C}:Re(s)>\frac{a+2b+1}{2}\}\times
GU(r+1,s+1)(\mathbb{A})$. The automorphic forms $E(f, s, g)$ are called {\it Klingen Eisenstein series}.

\begin{definition}\index{$E_P$} \index{$E_R$}
For any parabolic subgroup $R$ of $GU(r+1,s+1)$ and any automorphic form $\varphi$, we denote by $\varphi_R$ the constant term of $\varphi$ along $R$.
\end{definition}
The following lemma is well-known.  (See \cite[lemma 9.1.6]{SU}.)
\begin{lemma}\label{constant}
Let $R$ be a standard $F$-parabolic of $GU(r+1,s+1)$ (i.e. $R\supseteq B$ where $B$ is the standard Borel). Suppose $Re(s)>\frac{a+2b+1}{2}$.
\begin{enumerate}
\item[(i)]{If $R\not= P$ then $E(f,s,g)_R=0$;}
\item[(ii)]{$E(f,s,-)_P=f_s+A(\rho,f,s)_{-s}$.}
\end{enumerate}
\end{lemma}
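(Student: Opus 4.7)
The plan is to prove both parts via the standard Bruhat decomposition / cuspidality argument (as in \cite[Lemma 9.1.6]{SU}). First I would substitute the series definition of $E$ into the constant-term integral:
\[
E(f,s,-)_R(g) = \int_{N_R(\IQ)\backslash N_R(\adeles)} \sum_{\gamma \in P(\IQ)\backslash G(\IQ)} f_s(\gamma n g)\, dn.
\]
Absolute convergence for $\mathrm{Re}(s) > (a+2b+1)/2$ lets me interchange sum and integral. Then decompose $P(\IQ)\backslash G(\IQ)$ according to the Bruhat cells $P(\IQ)\backslash P(\IQ) w R(\IQ)$ indexed by $w \in W_P\backslash W/W_R$. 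Each Bruhat cell contributes a term that, after standard unfolding against the $N_R$-integral, can be rewritten as
\[
\int_{(w^{-1}N_P w \cap N_R)(\adeles)\backslash N_R(\adeles)} f_s(w n g)\, dn
\]
up to a sum over the remaining coset representatives that get absorbed into the integration domain.

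For part (i), I would argue as follows. Since $P$ is the (maximal) Klingen parabolic and $R\ne P$ is any other standard parabolic containing $B$, the Weyl combinatorics show that for each $w$ the group $w^{-1}P w \cap R$ fails to contain $N_R$. Writing the inner integrand in terms of the section $f_s$ and using that $f_s$ transforms by $\rho(m) = \tau(a)\pi(g)$ on the Levi, the remaining integration is seen to include integration of the cuspform in $\pi$ over a nontrivial unipotent subgroup of $GU(r,s)$. By cuspidality of $\pi$, every such term is zero, so $E(f,s,-)_R = 0$.

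For part (ii), when $R = P$ the double coset space $W_P\backslash W/W_P$ has exactly two representatives: the identity and the long element $w$ of Equation \eqref{wdefn}. The identity double coset contributes $f_s(g)$ itself, since $f_s$ is already $N_P(\adeles)$-invariant on the left (so the $dn$-integral is trivial on a set of measure one). The $w$-cell contributes, after unfolding,
\[
\int_{N_P(\adeles)} f_s(w n g)\, dn,
\]
which factors as a product of the local integrals defining $A(\rho,s,f)$, and which by construction (and analytic continuation from $\mathrm{Re}(s) > (a+2b+1)/2$) equals the value of $A(\rho,s,f)_{-s}$ at $g$, viewed as an element of $I(\rho^\vee,-s)$ and then pushed into automorphic forms by the same $g\mapsto (\cdot)(1)$ recipe. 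Combining the two contributions gives $f_s + A(\rho,f,s)_{-s}$.

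The main obstacle will be the bookkeeping in the Bruhat-decomposition step: identifying exactly which unipotent subgroups of $M_P$ arise as $\pi$-integration domains for each $w$, and checking that none of them is trivial when $R\ne P$. Once that combinatorial input is in hand, cuspidality of $\pi$ delivers (i), and the identification of the single surviving cell delivers (ii).
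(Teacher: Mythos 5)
Your proposal follows exactly the route the paper intends: the paper offers no proof of its own, merely citing \cite[Lemma 9.1.6]{SU}, and that reference carries out precisely the Bruhat--decomposition--plus--cuspidality computation you describe. One caveat on part (ii): your claim that $W_P\backslash W/W_P$ has exactly two relevant representatives is literally true only when $s+1=1$ (Witt index one, so two distinct isotropic lines are never orthogonal); for the general signature $GU(r+1,s+1)$ in which the lemma is stated there is an intermediate cell corresponding to distinct \emph{orthogonal} isotropic lines, and its contribution must be --- and is --- killed by the same cuspidality mechanism you already invoke in part (i) (it produces a constant term of $\varphi$ along a proper parabolic of $GU(r,s)$), so the conclusion is unaffected once that cell is accounted for.
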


\subsection{Siegel Eisenstein Series on $GU_n$}
\subsubsection{Local Picture}

  Our discussion in this section follows \cite[Sections 11.1-11.3]{SU} closely.  For each positive integer $n$, let $Q=Q_n$ \index{$Q_n$} be the Siegel parabolic subgroup of $GU_n=GU(n,n)$ consisting of matrices
$\begin{pmatrix}A&B\\0\cdot 1_n&D\end{pmatrix}$.
We denote by $K_{n,v}$ the maximal compact subgroup $GU_n\left(\ZZ_v\right)\subseteq GU_n\left(\IQ_v\right)$.  For each positive integer $n$, place $v$ of $\mathbb{Q}$, and character $\tau$ of $\mathcal{K}_v^\times$, we let $I_n(\tau)$ \index{$I_n(\tau)$} denote the space of smooth
$K_{n,v}$-finite functions\index{$K_{n,v}$}
\begin{align*}
f: K_{n,v}\rightarrow \mathbb{C}
\end{align*}
such that $f(qk)=\tau(\det D_q)f(k)$ for all $q\in Q_n(\mathbb{Q}_v)\cap K_{n,v}$.  (We write $q$ as block matrix $q=\begin{pmatrix}A_q&B_q\\0&D_q\end{pmatrix}$.) For $s\in \mathbb{C}$ and $f\in I_n(\tau)$, we also define a function
\begin{align*}
f(s,-):GU_n(\mathbb{Q}_v)\rightarrow \mathbb{C}
\end{align*}
 by
 \begin{align*}
 f(s,qk):=\tau(\det D_q))\left|\det A_q D_q^{-1}\right|_v^{s+n/2}f(k),
 \end{align*}
for all $q\in Q_n(\mathbb{Q}_v)$  and $k\in K_{n,v}.$

  Let $\tau$ be a unitary character of $\mathcal{K}_v^\times$, and let $v$ be a place of $\mathbb{Q}$. For $f\in I_n(\tau),s\in \mathbb{C},$ and $k\in K_{n,v}$, the intertwining integral is defined by
$$M(s,f)(k):=\bar\tau^n(\mu(k))\int_{N_{Q_n}(F_v)}f(s,w_nrk)dr,$$
where $N_{Q_n} = N_Q$ denotes the unipotent radical of the parabolic subgroup $Q_n = Q$ and $w_n=\begin{pmatrix}0 & 1_n\\ -1_n & 0\end{pmatrix}$.  For $s$ in compact subsets of $\{Re(s)>n/2\}$, this integral converges absolutely and uniformly, with the convergence being uniform in $k$. In this case it is easy to see that $M(s,f)\in I_n(\bar{\tau}^c).$  A standard fact from the theory of Eisenstein series says that this has a continuation to a meromorphic section on all of $\mathbb{C}$.

  Let $\mathcal{U}\subseteq \mathbb{C}$ be an open set. By a meromorphic section of $I_n(\tau)$ on $\mathcal{U}$, we mean a function $\varphi:\mathcal{U}\mapsto I_n(\tau)$ taking values in a finite-dimensional subspace $V\subset I_n(\tau)$ and such that
$\varphi:\mathcal{U}\rightarrow V$ is meromorphic.

\subsubsection{Global Picture}
For an id\`ele class character $\tau=\otimes\tau_v$ of $\mathbb{A}_\mathcal{K}^\times$ we define the space $I_n(\tau)$ to be the restricted tensor product defined using the spherical vectors $f_v^{\sph}\in I_n(\tau_v),f_v^{\sph}(K_{n,v})=1$ at the finite places $v$ where $\chi_v$ is unramified.

  For $f\in I_n(\tau)$ we consider the Siegel Eisenstein series
\begin{equation}\label{SiegelEisenstein}
E(f,s,g):=\sum_{\gamma\in Q_n(\mathbb{Q})\setminus GU_n(\mathbb{Q})} f(s,\gamma g).
\end{equation}
This series converges absolutely and uniformly for $(s,g)$ in compact subsets of $\{Re(s)>n/2\}\times GU_n(\mathbb{A}_\mathbb{Q})$.

  The Eisenstein series $E(f,s,g)$ has a meromorphic continuation in $s$ to all of $\mathbb{C}$ in the following sense. If $\varphi:\mathcal{U}\rightarrow I_n(\tau)$ is a meromorphic section, then we put
\begin{align*}
E(\varphi, s,g)=E(\varphi(s), s,g).
\end{align*}
This is defined at least on the region of absolute convergence and it is well known that it can be meromorphically continued to all $s\in \mathbb{C}$.

  Now for $f\in I_n(\tau),s\in \mathbb{C}$ and $k\in \prod_{v\nmid \infty}K_{n,v}\prod_{v|\infty}K_\infty$, there is a similar intertwining integral $M(s,f)(k)$ as above but with the integration being over $N_{Q_n}(\mathbb{A}_\mathbb{Q})$. This again converges absolutely and uniformly for $s$ in compact subsets of $\{Re(s)>n/2\}\times K_n$. Thus $s\mapsto M(s,f)$ defines a holomorphic section $\{Re(s)>n/2\}\rightarrow \Hom(I_n(\tau)\rightarrow I_n(\bar{\tau}^c))$. This has a continuation to a meromorphic section on $\mathbb{C}$. For $Re(s)>n/2$, we have
$$M(s,f)=\otimes_v M(s,f_v),f=\otimes f_v.$$
\index{$M(z;-)$}

  The functional equation for the Siegel Eisenstein series is
$$E(f,s, g)=\tau^n(\mu(g))E(M(s,f);-s,g)$$
in the sense that both sides can be meromorphically continued to all of $\IC$.

\subsubsection{Fourier Coefficients}
\begin{lemma}
Let $f=\otimes_vf_v\in I_n(\tau)$ be such that for some prime $\ell$ the support of $f_\ell$ is in $Q_n(\mathbb{Q}_\ell)w_nQ_n(\mathbb{Q}_\ell)$. Let $\beta$ be an element of the space $S_n\left(\CK_v\right)$ of Hermitian matrices with entries in $\CK_v$, and let $q = \left(q_v\right)$ be an element of $Q_n(\mathbb{A}_\mathbb{Q})$. If $\mathrm{Re}(s)>n/2$, then the Fourier coefficient at $\beta$ is
$$E_\beta(f;s,q)=\prod_v\int_{S_n\left(\CK_v\right)}f_v\left(s,w_nr(S_v)q_v\right)e_v(-\mathrm{Tr}\beta S_v)dS_v,$$
where $r(S_v)=\begin{pmatrix}1_n & S_v\\ 0 & 1_n\end{pmatrix}$.  The right hand side is absolutely convergent for $\mathrm{Re}(s)>n/2$.
\end{lemma}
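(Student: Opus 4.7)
The plan is to follow the standard unfolding for Fourier coefficients of Siegel Eisenstein series, using the support hypothesis on $f_\ell$ to collapse the Bruhat sum to a single cell. First I would start from the definition
\begin{align*}
E_\beta(f;s,q) = \int_{S_n(\mathbb{Q})\backslash S_n(\mathbb{A}_\mathbb{Q})} E(f;s,r(S)q)\, e(-\mathrm{Tr}\,\beta S)\, dS,
\end{align*}
substitute the series expression \eqref{SiegelEisenstein}, and interchange sum and integral (justified in the region $\mathrm{Re}(s) > n/2$ by the absolute and uniform convergence recorded just after \eqref{SiegelEisenstein}).

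Next I would use the Bruhat decomposition of $GU_n(\mathbb{Q})$ with respect to the Siegel parabolic $Q_n$. The double cosets $Q_n(\mathbb{Q})\backslash GU_n(\mathbb{Q})/Q_n(\mathbb{Q})$ are indexed by Weyl elements, and for each representative $w'$ different from $w_n$, the local component $f_{\ell,s}$ vanishes on $Q_n(\mathbb{Q}_\ell)\,w'\,Q_n(\mathbb{Q}_\ell)$ by our support hypothesis. Thus only the big cell $Q_n(\mathbb{Q})\,w_n\,Q_n(\mathbb{Q})$ contributes, and a set of representatives for $Q_n(\mathbb{Q})\backslash Q_n(\mathbb{Q})w_n Q_n(\mathbb{Q})$ is given by $\{w_n r(S) : S\in S_n(\mathbb{Q})\}$ (since $w_n Q_n w_n^{-1}\cap Q_n = M_{Q_n}$ and the unipotent radical is parametrized by the $S_n$ factor). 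So the Eisenstein sum restricted to this cell becomes
\begin{align*}
E(f;s,r(S)q) = \sum_{S'\in S_n(\mathbb{Q})} f\bigl(s,\, w_n r(S') r(S) q\bigr) = \sum_{S'\in S_n(\mathbb{Q})} f\bigl(s,\, w_n r(S'+S) q\bigr).
\end{align*}

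Substituting into the integral over $S_n(\mathbb{Q})\backslash S_n(\mathbb{A}_\mathbb{Q})$, the sum over $S'\in S_n(\mathbb{Q})$ unfolds the compact quotient to all of $S_n(\mathbb{A}_\mathbb{Q})$ in the standard fashion, giving
\begin{align*}
E_\beta(f;s,q) = \int_{S_n(\mathbb{A}_\mathbb{Q})} f\bigl(s,\, w_n r(S) q\bigr)\, e(-\mathrm{Tr}\,\beta S)\, dS.
\end{align*}
Since $f = \otimes_v f_v$ is a pure tensor and the additive character $e = \prod_v e_v$ factors, and since the Haar measure on $S_n(\mathbb{A}_\mathbb{Q})$ is the product measure, the adelic integral factors as a product of local integrals, yielding the stated formula. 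The absolute convergence of the product for $\mathrm{Re}(s) > n/2$ follows from the same estimates that give absolute convergence of $E(f;s,g)$ in this range, combined with the fact that at almost all finite places the local integral is a convergent intertwining integral on the spherical vector.

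The main obstacle is verifying the vanishing of the non-big-cell Bruhat contributions before unfolding: the support condition on $f_\ell$ makes this automatic, but one must check that the Bruhat representatives intersect the support of the global section $f$ only in the big cell, and that the set of big-cell representatives is indeed parametrized bijectively by $S_n(\mathbb{Q})$ modulo the stabilizer (so that no overcounting occurs when the sum is unfolded against the integral over $S_n(\mathbb{Q})\backslash S_n(\mathbb{A}_\mathbb{Q})$).
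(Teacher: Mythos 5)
Your proof is correct and is exactly the standard Bruhat-cell unfolding argument that the paper implicitly relies on (the lemma is stated there without proof, being the classical computation from Shimura and from the corresponding lemma in [SU]). The two points you flag — that a rational $\gamma$ lies in the big cell over $\mathbb{Q}$ iff it does over $\mathbb{Q}_\ell$ (so the support condition at the single place $\ell$ kills all smaller cells globally), and that $w_n r(S)$, $S\in S_n(\mathbb{Q})$, are genuine coset representatives because $w_n^{-1}Q_nw_n\cap Q_n$ is the Levi — are precisely the checks needed, and both go through as you describe.
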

Recall the definitions of $S$, $S'$, and the embedding $\alpha$ from Section \ref{unitarygroups-section}. Recall also that $E$ is the Siegel Eisenstein series defined in \eqref{SiegelEisenstein}.

We define $f_{v,\beta}$ to be the local integral on the right hand side of the above expression.
\subsection{Pull-Back Formulas}\label{pbsection}
Let $\tau$ be a unitary idele class character of $\mathbb{A}_\mathcal{K}^\times$. Given a cusp form $\varphi$ on $GU(r,0)$ we consider
\begin{align}\label{Fvarphi-equation}
F_\varphi(f;s,g):=\int_{U(0,r)(\mathbb{A}_\mathbb{Q})} f(s,S^{-1}\mathrm{diag}(g,g_1h)S)\bar\tau(\det g_1h)\varphi(g_1h)dg_1,
\end{align}
where $S$ is defined as in Equation \eqref{Sdefn-equ}, $f\in I_{r+1}(\tau)$, $g\in GU(r+1,1)(\mathbb{A}_\IQ)$, $h\in GU(0, r)(\mathbb{A}_\IQ)$, and $\mu(g)=\mu(h)$.
This is independent of $h$.  We also define
$$F_\varphi'(f';s,g):=\int_{U(0,r)(\mathbb{A}_\mathbb{Q})} f'(s,S^{'-1}\mathrm{diag}(g,g_1h)S')\bar\tau(\det g_1h)\varphi(g_1h)dg_1,$$
where $S'$ is defined as in Equation \eqref{Sprimedefn-equ}, $f'\in I_{r}(\tau)$, $g\in GU(r,0)(\mathbb{A}_\mathbb{Q})$, $h\in GU(r,0)(\mathbb{A}_\IQ)$, and $\mu(g)=\mu(h).$

We will work with the pull-back formulas given in Proposition \ref{pullbackidentities-prop}.
\begin{proposition}\label{pullbackidentities-prop}
Let $\tau$ be a unitary idele class character of $\mathbb{A}_\mathcal{K}^\times$.
\begin{enumerate}
\item[(i)]{ If $f'\in I_{r}(\tau),$ then $F_\varphi'(f';s,g)$ converges absolutely and uniformly for $(s,g)$ in compact sets of $\{Re(s)>r\}\times GU(r,0)(\mathbb{A}_\mathbb{Q})$, and for any $h\in GU(r,s)(\mathbb{A}_\IQ)$ such that $\mu(h)=\mu(g)$
\begin{align}\label{pullbak1-equ}
\int_{U(0,r)(\IQ)\backslash U(0, r)(\mathbb{A}_{\IQ})} E(f';s,S'^{-1}(g,g_1h)S')\bar{\tau}(\det g_1h)\varphi(g_1h)dg_1=F_\varphi'(f';s,g).
\end{align}}
\item[(ii)]{If $f\in I_{r+1}(\tau)$, then $F_\varphi(f;s,g)$ converges absolutely and uniformly for $(s,g)$ in compact sets of $\{Re(s)>r+1/2\}\times GU(r+1,s+1)(\mathbb{A}_\mathbb{Q})$ such that $\mu(h)=\mu(g)$
\begin{align}\label{pullbak2-equ}
\begin{split}
\int_{U(0,r)(\mathbb{Q})\setminus U(0, r)(\mathbb{A}_\mathbb{Q})} E(f;s,S^{-1}(g,g_1h)S)\bar{\tau}&(\det g_1h)\varphi(g_1h)dg_1\\
&=\sum_{\gamma\in P(\mathbb{Q})\setminus G(r+1,1)(\mathbb{Q})} F_\varphi(f;s,\gamma g),
\end{split}
\end{align}
with the series converging absolutely and uniformly for $(s,g)$ in compact subsets of $\{Re(s)>r+1/2\}\times G(r+1,1)(\mathbb{A}_\mathbb{Q}).$}
\end{enumerate}
\end{proposition}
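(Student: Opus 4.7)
The plan is to adapt the Garrett--Shimura pullback argument (in its unitary incarnation, treated in \cite[Section 11]{SU} and \cite[Section 4]{Hsieh}) to both cases in a uniform way. The common strategy is to substitute the defining series expansion of the Siegel Eisenstein series,
\[
E(f;s,x)=\sum_{\gamma\in Q_n(\mathbb{Q})\backslash GU_n(\mathbb{Q})}f(s,\gamma x),
\]
into the left-hand side, interchange summation and the integration over $U(0,r)(\mathbb{Q})\backslash U(0,r)(\mathbb{A}_{\IQ})$ (justified in the region of absolute convergence), and then classify the orbits of the embedded subgroup on $Q_n(\mathbb{Q})\backslash GU_n(\mathbb{Q})$. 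The embedding relevant to part (i) is $\gamma'$ defined via $S'$ in \eqref{Sprimedefn-equ}; for part (ii) it is $\gamma$ defined via $S$ in \eqref{Sdefn-equ}.

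For part (i), the embedded subgroup is $\gamma'(U(r,0)\times U(0,r))\subseteq GU_r$. Since $U(r,0)$ is definite, the group $U(0,r)\cong U(r,0)$ has no proper $\IQ$-rational parabolic, which forces the orbit structure to be very rigid: a direct computation with the explicit matrix $S'$ shows that the only orbit contributing to the integral is the open one (represented by the identity coset), and that its stabilizer in $U(r,0)\times U(0,r)$ is the diagonally embedded copy of $U(r,0)$. Combining this diagonal stabilizer with the adelic quotient $U(0,r)(\IQ)\backslash U(0,r)(\mathbb{A})$ collapses the integration domain onto the full group $U(0,r)(\mathbb{A})$, which matches the defining integral of $F'_\varphi(f';s,g)$ verbatim.

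For part (ii), the identical method is applied to $\gamma(U(r+1,1)\times U(0,r))\subseteq GU_{r+1}$, but now $U(r+1,1)$ is \emph{not} anisotropic; it admits the Klingen parabolic $P$ of Section \ref{unitarygroups-section}. Consequently the orbit decomposition produces a family of orbits parameterized precisely by the double coset space $P(\IQ)\backslash GU(r+1,1)(\IQ)$. An explicit matrix calculation with $S$ from \eqref{Sdefn-equ} verifies this bijection and shows that, for each representative $\gamma\in P(\IQ)\backslash GU(r+1,1)(\IQ)$, the associated unfolded integral is exactly $F_\varphi(f;s,\gamma g)$, so summing over representatives yields the right-hand side of \eqref{pullbak2-equ}.

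Absolute and uniform convergence on the stated half-planes follows from the absolute convergence of the Siegel Eisenstein series combined with the boundedness of $\varphi$ (since $U(r,0)$ is definite, the relevant adelic quotient is a finite set, so $\varphi$ takes only finitely many values) and an application of Fubini. The strengthened conditions $\mathrm{Re}(s)>r$ in (i) and $\mathrm{Re}(s)>r+1/2$ in (ii) reflect the additional integration over $U(0,r)(\mathbb{A})$ and, in (ii), the further summation over $P(\IQ)\backslash GU(r+1,1)(\IQ)$. The principal obstacle is the orbit analysis in (ii): one must verify, via a direct matrix-level computation using the explicit $S$, both the bijection between the relevant $Q_{r+1}$-double cosets and $P(\IQ)\backslash GU(r+1,1)(\IQ)$ and the fact that the stabilizer of the distinguished representative is precisely $P$. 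This computation is strictly parallel to the unfolding calculations in \cite[Section 11]{SU}, and I would follow that template in the details.
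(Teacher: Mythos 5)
Your unfolding argument is exactly the standard Garrett--Shimura--Piatetski-Shapiro--Rallis orbit analysis that the paper's proof invokes by citation: the paper proves this proposition simply by quoting \cite[Proposition 11.2.3]{SU}, which attributes (i) to \cite{GPSR} and (ii) to Shimura and Garrett, and those references carry out precisely the substitution-of-the-series, orbit-classification, and stabilizer computation you describe (with anisotropy of $U(0,r)$ eliminating all but the main orbit). So your proposal is correct and takes essentially the same approach as the paper, only written out rather than cited.
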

\begin{proof}
This is \cite[Proposition 11.2.3]{SU}; as explained in the proof of \cite[Proposition 11.2.3]{SU}, statement (i) is due to \cite{GPSR},  and statement (ii) is a straight-forward generalization due to Shimura and Garrett.
\end{proof}

\section{Local Computations}\label{local-computations-section}
For the remainder of the paper, we specialize to the case $s=0$.  There are two reasons for our restriction on the signature.  First, the pairings we consider can be expressed as finite sums in this case, which simplifies the construction.  Second, in this case, the Klingen Eisenstein series we construct is holomorphic by Proposition \ref{questionprop}; to make the construction work for arbitrary signature, one needs a holomorphic projection operator (which we have not touched for this project) since the forms obtained by applying a differential operator are merely nearly holomorphic.

Let $d=2(a_1+\cdots+a_r)$, where $a_1\geq \cdots \geq a_r$ as above.
For each positive integer $\kappa\geq r+1$, we will construct a differential operator on the space of holomorphic forms of scalar weight $\kappa$ on $GU(r+1,r+1)$. We consider the representation of $GL_{r+1}\times GL_1\times GL_r$ of highest weight $(a_1,\ldots,a_r,0),(0),(a_1,\ldots,a_r)$. Note that this representation is a summand of $(\mathrm{St}_{\GL_{r+1}}\otimes \mathrm{St}_{\GL_{r+1}})^d$ via the inclusion $\GL_{r+1}\times \GL_1\times \GL_r\hookrightarrow GL_{r+1}\times GL_{r+1}$.
We obtain a differential operator $D_{(\underline{k},0,\kappa)}$ from $\mathcal{M}_\kappa^{(r+1,r+1)}$ to nearly holomorphic forms or $p$-adic modular forms on $U(r+1,1)\times U(0,r)$ of weight $(a_1,\cdot, a_r,0;\kappa), (a_1,\ldots,a_r)$, by applying the following steps:
First, we apply the differential operators $\partial (\rho_\kappa,p-\mathrm{adic},d)$ or $\partial(\rho_\kappa,C^{\infty},d)$ (where $\rho_\kappa$ denotes the representation of weight $\kappa$), and then we pull back to $U(r+1,1)\times U(0,r)$, and finally, we project to the summand of the above representation. \\

  We are also going to construct a differential operator on the space of holomorphic forms on $U(r,r)$ of scalar weight $\kappa$. We consider the representation of $GL_{r}\times GL_r$ with highest weight $(a_1,\ldots,a_r),(a_1,\ldots,a_r)$. Note that this representation is a summand of $(\mathrm{St}_{GL_{r}}\otimes \mathrm{St}_{GL_{r}})^d$.
Similarly to the procedure discussed in the previous paragraph,we obtain a differential operator $D_{(\underline{k},0,\kappa)}$ from $\mathcal{M}_\kappa^{(r,r)}$ to nearly holomorphic forms or $p$-adic modular forms on $U(r, 0)\times U(0,r)$ of weight $(a_1,\ldots,a_r), (a_1,\ldots,a_r)$, by applying the following steps:
First, we apply the differential operator $\partial (\rho_\kappa,p-\mathrm{adic},d)$ or $\partial(\rho_\kappa,C^{\infty},d)$, then we pull back to $U(r,0)\times U(0,r)$, and finally, we project to the summand of the above representation. \\

\subsection{Finite Primes, Unramified Case}
In this subsection we define $f_{v,\sieg}=f_v^{sph}\in I_{r+1}(\tau)$ and $f_{v,\sieg}'=f_v^{sph}\in I_r(\tau)$.
\subsubsection{Fourier Coefficients}
  Let $v\neq p$ be a prime of $\IQ$, and let $\tau$ be a character of $\mathcal{K}_v^\times$, where $\mathcal{K}_v:=\mathcal{K}\otimes \ZZ_v$. For $n=r$ or $r+1$, $f\in I_n(\tau)$, and $\beta\in S_m(F_v),0\leq m\leq n$, the local Fourier-Jacobi coefficient is defined by
$$f_{v,\beta}(s,g):=\int_{S_m(F_v)}f\left(s,w_n\begin{pmatrix}1_n&S\\&1_n\end{pmatrix}g\right)e_v(-\mathrm{Tr}\beta S)dS,$$
as in \cite[Sections 18.9 and 18.10]{Shi97}.

We denote by $\chi_{\mathcal{K}}$ the quadratic character associated to the extension $\mathcal{K}/\IQ$ by class field theory.  We record \cite[Proposition 18.14 and 19.2]{Shi97}, a special case of which appears in \cite[Lemma 11.4.6]{SU}.
\begin{lemma}
Let $\beta\in S_n(F_v)$ and let $r:=rank(\beta)$.  We denote by $\tau'$ the restriction of the character $\tau$ on $\mathcal{K}_p$ to $\IQ_p^\times$.  Then for each $y\in GL_n(\mathcal{K}_v)$,
\begin{eqnarray*}
f_{v,\beta}^{\sph}(s,\mathrm{diag}(y,{}^t\!\bar y^{-1}))=&\tau(\det y)|\det y\bar y|_v^{-s+n/2}\\
&\times \frac{\prod_{i=r}^{n-1} L(2s+i-n+1,\bar\tau'\chi_\mathcal{K}^i)}{\prod_{i=0}^{n-1}L(2s+n-i,\bar\tau\chi_\mathcal{K}^i)}h_{v,{}^t\!\bar y\beta y}(\bar\tau'(\varpi)q_v^{-2s-n}),\end{eqnarray*}
where $\tilde{\pi}$ denotes the contragredient of $\pi$, $\tilde{\tau}$ denotes the contragredient of $\tau$, $\varpi$ is a uniformizer, $q_v$ is the size of the residue field, and $h_{v,{}^t\!\bar y\beta y}\in \mathbb{Z}[X]$ is a monic polynomial depending on $v$ and ${}^t\!\bar y\beta y$ but not on $\tau$. If $\beta\in S_n(\mathcal{O}_v)$ and $\det \beta\in \ZZ_v^\times$, then $h_{v,\beta}=1$; in this case, we say that $\beta$ is \emph{$v$-primitive}.\end{lemma}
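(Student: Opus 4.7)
The plan is to prove this in two stages: first reduce to the central point $y = 1$ using the transformation formula for the spherical section, and then evaluate the resulting intertwining-type integral by Shimura's local computation.

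First I would exploit the fact that $q := \mathrm{diag}(y, {}^t\!\bar y^{-1})$ lies in the Siegel parabolic $Q_n(\IQ_v)$, so the induced section $f^{\sph}(s,-)$ transforms by a character. The key identity is
\begin{equation*}
w_n\, r(S)\, \mathrm{diag}(y,{}^t\!\bar y^{-1}) \;=\; \mathrm{diag}({}^t\!\bar y^{-1},y)\, w_n\, r\bigl(y^{-1}S\,{}^t\!\bar y^{-1}\bigr),
\end{equation*}
which follows from a direct matrix manipulation using $w_n = \begin{pmatrix}0&1_n\\-1_n&0\end{pmatrix}$. Applying the transformation rule for $f^{\sph}(s,-)$ under $\mathrm{diag}({}^t\!\bar y^{-1},y) \in Q_n$ and then performing the change of variable $S \mapsto y\,S\,{}^t\!\bar y$ (whose Jacobian on the space of Hermitian matrices is $|\det y\bar y|_v^n$) yields
\begin{equation*}
f_{v,\beta}^{\sph}\bigl(s,\mathrm{diag}(y,{}^t\!\bar y^{-1})\bigr) \;=\; \tau(\det y)\,|\det y\bar y|_v^{-s+n/2}\, f_{v,\,{}^t\!\bar y\beta y}^{\sph}(s,1).
\end{equation*}
This absorbs all the $y$-dependence except through the twisted matrix ${}^t\!\bar y\beta y$, and produces the expected similitude factor.

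Next I would compute the central Fourier coefficient $f_{v,\beta'}^{\sph}(s,1)$ for $\beta' = {}^t\!\bar y\beta y$. This is the technical heart of the argument and is carried out in \cite{Shi97}. The method is to use the Iwasawa decomposition $w_n r(S) = q(S)\, \kappa(S)$ with $q(S) \in Q_n(\IQ_v)$ and $\kappa(S) \in K_{n,v}$, so the integrand $f^{\sph}(s, w_n r(S))$ is replaced by the character $\tau(\det D_{q(S)})\,|\det A_{q(S)}D_{q(S)}^{-1}|_v^{s+n/2}$. Stratifying $S_n(F_v)$ by the elementary-divisor type of $S$ (or equivalently, by the $Q_n(\mathcal{O}_v)$-orbit of $S$ mod lattices) reduces the integral to a geometric sum whose generating function is a ratio of products of local abelian $L$-factors $L(2s+j,\bar\tau'\chi_{\mathcal K}^j)$. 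The denominator factors $\prod_{i=0}^{n-1} L(2s+n-i,\bar\tau\chi_\mathcal{K}^i)$ arise as the normalization constants for the spherical vector, and the numerator factors $\prod_{i=r}^{n-1}L(2s+i-n+1,\bar\tau'\chi_\mathcal{K}^i)$ appear only when $\beta'$ is of rank $r < n$, reflecting the contribution of the singular radical to the integral. The polynomial $h_{v,\beta'}\in \mathbb{Z}[X]$ evaluated at $\bar\tau'(\varpi)q_v^{-2s-n}$ captures the residual dependence on the elementary divisors of $\beta'$; it is independent of $\tau$ because, after normalization, only the absolute norm $q_v^{-2s-n}$ and the value of the quadratic character $\chi_\mathcal{K}$ enter the geometric sum — Shimura identifies this explicitly via a recursive computation on the rank stratification.

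The main obstacle is the second stage, namely the geometric evaluation: one must control the orbit sum for Hermitian matrices carefully enough to extract both the clean $L$-factor quotient and the correct polynomial $h_{v,\beta'}$, and in particular verify the final clause that $h_{v,\beta'}=1$ whenever $\det\beta' \in \ZZ_v^\times$ (the $v$-primitive case), where the integral collapses to a single stratum. Once the central formula is established and combined with the reduction step above, the claim follows at once.
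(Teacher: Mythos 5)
Your proposal is correct and follows essentially the same route as the paper, which simply records this as \cite[Propositions 18.14 and 19.2]{Shi97}: the technical heart (the evaluation of $f_{v,\beta'}^{\sph}(s,1)$ via the Iwasawa decomposition and the orbit/generating-function computation, including the identification of $h_{v,\beta'}$ and the $v$-primitive case) is exactly Shimura's local computation, to which both you and the paper defer. Your explicit reduction to $y=1$ -- the identity $w_n r(S)\,\mathrm{diag}(y,{}^t\!\bar y^{-1})=\mathrm{diag}({}^t\!\bar y^{-1},y)\,w_n r(y^{-1}S\,{}^t\!\bar y^{-1})$, the parabolic transformation factor $\tau(\det y)|\det y\bar y|_v^{-(s+n/2)}$, and the Jacobian $|\det y\bar y|_v^{n}$ combining to give $|\det y\bar y|_v^{-s+n/2}$ -- checks out and is a correct unpacking of the first step of that computation.
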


\subsubsection{Pullback Integrals}\label{pbints}

Recall the relationship between $\rho$ and $\pi$ from Equation \eqref{rhopi-nonarch}.
\begin{lemma}\label{lemma4.2}
Suppose that $\pi,\psi$, and $\tau$ are unramified and that $\varphi\in\pi$ is a new vector. If $Re(s)>r/2$, then the pull back integral converges, and
$$f_{v,\Kling}(s, g):=F_\varphi(f_v^{\sph}; s, g)=
\frac{L(\tilde\pi,\bar\tau^c,s+1)}{\prod_{i=0}^{r-1}L(2s+r+1-i,\bar\tau'\chi_\mathcal{K}^i)}
F_{\rho,s}(g),$$
and
$$F_\varphi'(f_v^{\sph,'};s,g)=
\frac{L(\tilde\pi,\bar\tau^c,s+\frac{1}{2})}{\prod_{i=0}^{r-1}L(2s+r-i,\bar\tau'\chi_\mathcal{K}^i)}
\varphi(g),$$
where $\varphi\in\pi_v$ is the spherical vector and $F_{\rho, s}$ is the spherical section and recall that $\tilde{\pi}$ is the contragradient representation of $\pi$.
\end{lemma}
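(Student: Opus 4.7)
The plan is to exploit the uniqueness of spherical vectors in the induced representation. A direct computation shows that $g \mapsto F_\varphi(f_v^{\sph}; s, g)$, as a function on $GU(r+1,1)(\mathbb{Q}_v)$, lies in $I(\rho, s)$ in the Klingen case, and is right-invariant under the hyperspecial maximal compact $K_v$. Since $\rho_v$ (built from the unramified $\pi_v$ and $\tau_v$) is itself an unramified principal series of $P(\mathbb{Q}_v)$, the space $I(\rho, s)^{K_v}$ is one-dimensional, spanned by $F_{\rho, s}$. Hence $F_\varphi(f_v^{\sph}; s, g) = c(s) \cdot F_{\rho, s}(g)$ for some scalar $c(s)$, and the entire problem is reduced to computing $c(s)$ by evaluating at a single point, for which $g = 1$ is the natural choice. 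The primed case is even easier: since $GU(r,0)$ is just the target group itself, $F_\varphi'(f_v^{\sph,'}; s, g)$ must be a scalar multiple of $\varphi(g)$, again by the uniqueness of spherical vectors in $\pi_v$.

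Next I would carry out the scalar computation via the doubling method of Garrett--Piatetski-Shapiro--Rallis. Setting $g = 1$ and choosing $h$ so that $\mu(h) = 1$, the definition \eqref{Fvarphi-equation} unfolds to the standard doubling zeta integral
\begin{align*}
c(s) \cdot F_{\rho, s}(1) = \int_{U(0,r)(\mathbb{Q}_v)} f_v^{\sph}\bigl(s, S^{-1}\mathrm{diag}(1, g_1 h) S\bigr) \, \bar{\tau}(\det g_1 h) \, \varphi(g_1 h) \, dg_1.
\end{align*}
The pullback section $f_v^{\sph}$, evaluated on the doubled group at an element coming from $\gamma'$ (after the inner Siegel-parabolic Bruhat decomposition in $GU_{r+1}$), matches up with the spherical Whittaker or zonal spherical function attached to $\pi_v$. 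The classical doubling computation then evaluates this integral to the ratio
\begin{align*}
\frac{L(\tilde{\pi}_v, \bar{\tau}_v^c, s+1)}{\prod_{i=0}^{r-1} L(2s+r+1-i, \bar{\tau}_v' \chi_{\mathcal{K},v}^i)},
\end{align*}
where the denominator is precisely the normalizing factor coming from $f_v^{\sph}(s, -)$ itself (matching the Gindikin--Karpelevich factors in the image of the intertwining operator), and the numerator is the unramified standard $L$-factor produced by the doubling zeta integral. The primed case is completely analogous, but uses the embedding $\gamma'$ into $GU_r$ rather than $GU_{r+1}$, which shifts the argument of $L(\tilde{\pi}, \bar{\tau}^c, \cdot)$ by $1/2$ relative to the unprimed case and replaces the denominator index $r+1-i$ by $r-i$.

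The main obstacle is not conceptual -- the structure of the argument is the standard unramified doubling computation -- but rather a careful matching of normalizations. In particular, one must verify (i) that the doubling embedding induced by $S$ in Equation \eqref{Sdefn-equ} sends $f_v^{\sph}$ to a vector whose restriction to the relevant parabolic matches the chosen model of $I(\rho)$; (ii) that the character $\bar{\tau}^c$ appearing on the $L$-function is consistent with the convention $\tau^c(x) = \tau(x^c)$ fixed in Section \ref{notation-section} and with the twist $\pi^\vee \otimes (\tau \circ \det)$ built into $\rho^\vee$; and (iii) that the local measure on $U(0,r)(\mathbb{Q}_v)$ is the one implicitly used in the global factorization. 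Once these bookkeeping items are settled, the formula is an instance of the GPSR unramified computation -- essentially \cite[Prop.~3]{GPSR} in the variant recorded in the proof of \cite[Lemma 11.4.6]{SU} -- and can be cited.
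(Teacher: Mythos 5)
Your proposal is correct and follows essentially the same route as the paper, which gives no argument of its own but simply cites the more general \cite[Lemma 4.2.3]{WAN}, itself resting on the unramified doubling computation of Lapid--Rallis \cite{LR05} (equivalently the GPSR computation you invoke via \cite[Lemma 11.4.6]{SU}). Your added reduction via uniqueness of the spherical vector in $I(\rho)^{K_v}$ is exactly the standard first step of that cited computation, so there is no substantive divergence.
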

A more general version of Lemma \ref{lemma4.2} appears in \cite[Lemma 4.2.3]{WAN}, which cites a proof given in \cite{LR05}.

\subsection{$\ell$-adic Computations}
\subsubsection{Pullback Integrals}

  Let $\ell$ be a rational prime that is prime to $p$.  By \cite[Lemma 4.3.2]{WAN}, there is an element $y$ in $\mathcal{O}_\ell$ (where $\mathcal{O}_\ell:=\mathcal{O}_{\mathcal{K}}\otimes\ZZ_\ell$) divisible by sufficiently high powers of $\ell$ so that Lemma \ref{lemma4.4} and Lemma \ref{lemma4.5} below hold; throughout this section, we take $y$ to be such an element.
\begin{definition}
Let $f^\dag$ be the Siegel section in $I_{r+1}(\tau)$ (resp. $I_r(\tau)$) supported in $Q(\mathbb{Q}_\ell)w_{r+1}N_Q(\mathbb{Z}_\ell)$ (resp. $Q(\mathbb{Q}_\ell)w_{r+1}N_Q(\mathbb{Z}_\ell)$) which takes the value $1$ on $w_{r+1}$ (resp. $w_r$) and is invariant under $N_Q(\mathbb{Z}_\ell)$. We define a Siegel section by
$$f_{\ell,\sieg}(g)=f^\dag(g\tilde{\gamma}_\ell)$$
for $\tilde{\gamma}_\ell=\begin{pmatrix}1&&&\\&1_r&&\frac{1}{y\bar{y}}\\&&1&\\&&&1_r\end{pmatrix}$.
Similarly, we define $f_{\ell,\sieg}'(g)=f^\dag(g\tilde{\gamma}_\ell')$ for $\tilde{\gamma}_\ell'=\begin{pmatrix}1&\frac{1}{y\bar{y}}\\&1\end{pmatrix}$.
\end{definition}
The following two lemmas are proved in \cite[Lemmas 4.3.2 and lemma 4.3.3]{WAN}:
\begin{lemma}\label{lemma4.4}
Let $K^{(2)}_\ell$ be the subgroup of $GU(r+1,1)(\mathbb{Q}_\ell)$ consisting of matrices of the form $\begin{pmatrix}1&f&c\\&1&g\\&&1\end{pmatrix}$ such that $f$ is a $1\times r$ matrix, $g$ is a $r\times 1$ matrix,
$c-\frac{1}{2} f\zeta{ }^t\bar{f}\in\ZZ_\ell$, $g=\zeta{ }^t\bar{f}$, and $f\in \left(y\bar{y}\right)M_{1\times r}\left(\mathcal{O}_\ell\right)$.
Let $\varphi\in\pi_v$. Then the pullback section $F_{\varphi}(s;w,f_{v,\sieg})$ is supported in $PwK_\ell^{(2)}$ and is invariant under the action of $K_\ell^{(2)}$, where $w$ is defined as in Equation \eqref{wdefn}.
\end{lemma}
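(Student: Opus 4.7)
The plan is to reduce the lemma to an explicit block-matrix computation inside $GU_{r+1}(\IQ_\ell)$, exploiting the support condition for $f^\dag$ (which lives on the big Bruhat cell $Q(\IQ_\ell)w_{r+1}N_Q(\ZZ_\ell)$) together with the $N_Q(\ZZ_\ell)$-invariance of $f^\dag$. Expanding the definition of $f_{\ell,\sieg}$, the pullback integral is
$$F_\varphi(s;w,f_{\ell,\sieg})(g)=\int_{U(0,r)(\IQ_\ell)} f^\dag\!\left(s,\,S^{-1}\mathrm{diag}(g,g_1h)S\,\tilde{\gamma}_\ell\right)\bar{\tau}(\det g_1h)\varphi(g_1h)\,dg_1,$$
so the whole statement reduces to analyzing the set of $g$ for which $S^{-1}\mathrm{diag}(g,g_1h)S\,\tilde{\gamma}_\ell$ can be made to lie in $Q(\IQ_\ell)w_{r+1}N_Q(\ZZ_\ell)$ for some $g_1\in U(0,r)(\IQ_\ell)$, and then studying how small right translates of $g$ alter the integrand.

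First I would establish the support statement. Using Bruhat decomposition for $GU(r+1,1)(\IQ_\ell)$ along the Klingen parabolic $P$, every $g$ can be written as $g=p\cdot w\cdot u$ with $p\in P(\IQ_\ell)$ and $u$ in a Bruhat coordinate of the opposite unipotent. By left $P$-equivariance of the integrand and by absorbing the $U(0,r)$-factor into a change of variables $g_1\mapsto g_1'$, one reduces the question to whether $S^{-1}\mathrm{diag}(wu,g_1'h)S\,\tilde{\gamma}_\ell$ meets the support of $f^\dag$. A direct block expansion of the conjugation by $S$ and the twist by $\tilde{\gamma}_\ell$ shows that the Bruhat position of the output is governed by the entries of $u$ together with the fixed shift $\frac{1}{y\bar{y}}$ appearing in $\tilde{\gamma}_\ell$; the conditions for landing in $Q w_{r+1}N_Q(\ZZ_\ell)$ translate back to saying $u\in K_\ell^{(2)}$, which recovers the support statement.

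Next I would verify the $K_\ell^{(2)}$-invariance. For $k\in K_\ell^{(2)}$, substitute $gk$ for $g$ and perform a change of variable in $g_1$ (using the adjoint action of $k$ transported across the embedding $\gamma$) that eliminates $k$ from the $U(r+1,1)$-slot and redistributes it as an element $n\in N_Q(\IQ_\ell)$ multiplying $\tilde{\gamma}_\ell$. The defining constraints of $K_\ell^{(2)}$, namely $g=\zeta\,{}^t\bar{f}$, $c-\frac{1}{2}f\zeta\,{}^t\bar{f}\in\ZZ_\ell$, and $f\in(y\bar{y})M_{1\times r}(\mathcal{O}_\ell)$, are precisely what forces $n$ to lie in $N_Q(\ZZ_\ell)$ rather than in a larger integral group: the isotropy condition $g=\zeta\,{}^t\bar{f}$ reflects the Hermitian structure preserved by $\gamma$, the Hermitian part $c-\frac{1}{2}f\zeta\,{}^t\bar{f}$ corresponds to the Hermitian block in $N_Q$, and the $y\bar{y}$-divisibility compensates for the denominator $\frac{1}{y\bar{y}}$ appearing in $\tilde{\gamma}_\ell$. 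Invariance then follows from the $N_Q(\ZZ_\ell)$-invariance built into the definition of $f^\dag$.

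The main obstacle is the block-matrix bookkeeping: conjugation by $S$ mixes the $(r+1,1)$- and $(0,r)$-coordinates in a nontrivial way involving $\zeta$, and inserting the shift $\tilde{\gamma}_\ell$ requires tracking where each entry lands in the upper-triangular Siegel decomposition. The choice of $y\in\mathcal{O}_\ell$ divisible by a sufficiently high power of $\ell$ is exactly what is needed to clear the denominators in these entries, so that the relevant products land in $N_Q(\ZZ_\ell)$ and the computation closes up. Since the analogous calculation has been carried out in \cite{WAN}, I would match conventions with that paper and import the matrix identities, rather than redo them from scratch.
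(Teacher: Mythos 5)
Your outline is sound and matches the paper's treatment: the paper gives no proof of its own here, simply citing \cite[Lemma 4.3.2]{WAN}, and your sketch (support of $f^\dag$ on the big cell, tracking the conjugation by $S$ and the shift $\tilde{\gamma}_\ell$, using the $y\bar{y}$-divisibility to land in $N_Q(\ZZ_\ell)$, then invoking the built-in $N_Q(\ZZ_\ell)$-invariance) is exactly the computation carried out there, to which you also ultimately defer. No discrepancy to report.
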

Now we let $\mathfrak{Y}$ be the subset of $U(r)(\mathbb{Q}_\ell)$ consisting of matrices $g$ such that $(1-g)\in\zeta y\bar{y}(1+y\bar{y}\tilde{N})$ for some $\tilde{N}$ with $\ell$-integral coefficients.
\begin{lemma}\label{lemma4.5}
Let $\varphi\in\pi_\ell$ be a vector invariant under the action of $\mathfrak{Y}$.  Then $F_{\varphi}(s;w,f_{v,\sieg})=\tau(y\bar{y})|(y\bar{y})^2|_\ell^{-s-\frac{r+1}{2}}\mathrm{Vol}(\mathfrak{Y})\varphi$ and $F_\varphi'(f_{v,\sieg}'; s,1)=\tau(y\bar{y})|(y\bar{y})^2|_\ell^{-s-\frac{r}{2}}\mathrm{Vol}(\mathfrak{Y})\phi$.
\end{lemma}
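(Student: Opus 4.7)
The plan is to unfold the pullback integral definition of $F_\varphi$ using the support condition provided by Lemma \ref{lemma4.4} to reduce the integration over $U(0,r)(\IQ_\ell)$ to a single double coset and then extract the claimed constants via the $Q$-equivariance of $f^\dag$.

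First I would substitute $g = w$ and $h = 1$ (permissible since $\mu(w)=\mu(1)$) into the definition
\begin{align*}
F_\varphi(f_{\ell,\sieg};s,w) = \int_{U(0,r)(\IQ_\ell)} f^\dag\!\left(s,\ S^{-1}\mathrm{diag}(w,g_1)S\,\tilde\gamma_\ell\right)\bar\tau(\det g_1)\,\varphi(g_1)\,dg_1.
\end{align*}
The key point is that $f^\dag$ is supported on $Q(\IQ_\ell)w_{r+1}N_Q(\ZZ_\ell)$, so the integrand is nonzero only for those $g_1\in U(0,r)(\IQ_\ell)$ such that the conjugated element lies in this double coset. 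A direct block computation (using the explicit form of $S$ and $\tilde\gamma_\ell$) shows that the product $S^{-1}\mathrm{diag}(w,g_1)S\,\tilde\gamma_\ell$ factors as an element of $Q(\IQ_\ell)\cdot w_{r+1}\cdot n$ with $n\in N_Q(\ZZ_\ell)$ if and only if $g_1$ satisfies the defining congruence $(1-g_1)\in \zeta y\bar y(1+y\bar y\tilde N)$ for some $\ell$-integral $\tilde N$, i.e.\ $g_1\in\mathfrak{Y}$. This is the step that requires care; it is essentially the same matrix identity used in the parallel computation of \cite{WAN}, and it is the main obstacle since one must keep careful track of the various $y\bar y$ factors inserted by $\tilde\gamma_\ell$.

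Once the integration is reduced to $\mathfrak{Y}$, I would read off the $Q$-factor that comes out of $f^\dag$: writing the factorization as $q \cdot w_{r+1}\cdot n$, the $A_q D_q^{-1}$ block has determinant of $\ell$-adic norm $|y\bar y|_\ell^{2}$, and $\det D_q$ contributes a $\tau(y\bar y)^{-1}$ (or $\tau(y\bar y)$ after the $\bar\tau(\det g_1)$ factor is accounted for). Combining with the $|\det A_q D_q^{-1}|_\ell^{s+(r+1)/2}$ factor from the definition $f(s,qk)=\tau(\det D_q)|\det A_q D_q^{-1}|_\ell^{s+n/2}f(k)$ gives the overall scalar $\tau(y\bar y)\,|(y\bar y)^2|_\ell^{-s-(r+1)/2}$.

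Finally, since $\varphi$ is invariant under $\mathfrak{Y}$ by hypothesis and $\bar\tau(\det g_1)=1$ for $g_1\in\mathfrak{Y}$ (as $\det g_1\equiv 1$ modulo the relevant unit ideal and $\tau$ is a character of $\mathcal{K}_\ell^\times$, which we may arrange by taking $y$ sufficiently $\ell$-divisible), we pull $\varphi(1)=\varphi$ out of the integral, and the remaining integral reduces to $\int_{\mathfrak{Y}} dg_1 = \mathrm{Vol}(\mathfrak{Y})$. This yields the first identity. The second identity for $F_\varphi'(f_{\ell,\sieg}';s,1)$ is proved by the identical argument with $S$ replaced by $S'$, $\tilde\gamma_\ell$ by $\tilde\gamma_\ell'$, and the exponent shift $(r+1)/2\mapsto r/2$ reflecting that we are now inducing from $I_r(\tau)$ rather than $I_{r+1}(\tau)$.
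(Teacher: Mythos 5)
The paper does not actually prove this lemma: it is quoted from \cite[Lemmas 4.3.2 and 4.3.3]{WAN}, and your unfolding strategy (restrict the $g_1$-integral using the support $Q(\mathbb{Q}_\ell)w_{r+1}N_Q(\ZZ_\ell)$ of $f^\dag$, then read off the $Q$-part) is indeed the standard route and the one taken in that reference. However, as a proof your proposal has a genuine gap: the entire content of the lemma \emph{is} the explicit block factorization of $S^{-1}\mathrm{diag}(w,g_1)S\,\tilde\gamma_\ell$ (resp.\ $S'^{-1}\mathrm{diag}(1,g_1)S'\,\tilde\gamma_\ell'$) as $q\cdot w_{r+1}\cdot n$, and you assert both halves of it --- that the factorization exists precisely for $g_1\in\mathfrak{Y}$, and that the resulting $q$ contributes the scalar $\tau(y\bar y)\,|(y\bar y)^2|_\ell^{-s-\frac{r+1}{2}}$ --- without carrying out the computation. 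Since the constants you ``read off'' are exactly the constants to be proved, the argument as written is reverse-engineered from the statement rather than a derivation of it. (A smaller point: ``$\varphi(g_1)$'' in the local integral should be $\pi_\ell(g_1)\varphi$, and the hypothesis is $\pi_\ell(u)\varphi=\varphi$ for $u\in\mathfrak{Y}$; your ``$\varphi(1)=\varphi$'' phrasing papers over this but the intent is right. Also note that Lemma \ref{lemma4.4} constrains the support of the pullback section in the $g$-variable, not of the integrand in $g_1$; the $g_1$-support comes directly from the support of $f^\dag$, as you in fact use.)

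There is also a concrete sign of the missing computation: you claim $|\det A_qD_q^{-1}|_\ell=|y\bar y|_\ell^{2}$, but substituting into $f(s,qk)=\tau(\det D_q)\left|\det A_qD_q^{-1}\right|_\ell^{s+\frac{r+1}{2}}f(k)$ that would produce $|(y\bar y)^2|_\ell^{+s+\frac{r+1}{2}}$, the reciprocal of the factor in the lemma; consistency with the stated answer forces $|\det A_qD_q^{-1}|_\ell=|y\bar y|_\ell^{-2}$. To close the gap you must exhibit the factorization explicitly: using the shapes of $S$, $\tilde\gamma_\ell$ and $w$, show that for $g_1\in\mathfrak{Y}$ one can solve for $q\in Q(\mathbb{Q}_\ell)$ with $\det D_q$ a unit multiple of $(y\bar y)^{\pm 1}$ and for $n\in N_Q(\ZZ_\ell)$ (this is where the congruence $(1-g_1)\in\zeta y\bar y(1+y\bar y\tilde N)$ enters, guaranteeing integrality of $n$ and that $\bar\tau(\det g_1)=1$), and that for $g_1\notin\mathfrak{Y}$ the element falls outside the big cell coset. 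This is precisely the computation recorded in \cite{WAN}.
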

We fix once for all such a vector $\varphi$ and define $f_{v,Kling}=F_{\varphi}(s;w,f_{v,\sieg})$
\subsubsection{Fourier Coefficients}
In Lemma \ref{lemma4.6} below, we let $e_\ell$ denote the exponential function at $\ell$. Let $S_n(\mathbb{Z}_\ell)^*$ be the dual of $S_n(\mathbb{Z}_\ell)$ under the pairing $<g,h>=\mathrm{tr}(gh)$.
\begin{lemma}\label{lemma4.6}
Let $f_v$ be a Siegel section.
\begin{itemize}
\item[(i)] Let $\beta  = \left(\beta_{ij}\right)\in S_{r+1}(\mathbb{Q}_\ell)$. Then $f_{v,\beta}(s,1)=0$ if $\beta\not\in S_{r+1}(\mathbb{Z}_\ell)^*$. If $\beta\in S_{r+1}(\mathbb{Z}_\ell)^*$ then
$$f_{v,\beta}(s,\mathrm{diag}(A,{}^t\!\bar{A}^{-1}))=D_\ell^{-\frac{(r+1)r}{4}}\tau(\det A)|\det A\bar{A}|_\ell^{-s+\frac{r+1}{2}}e_\ell\left(\frac{\beta_{22}+\cdots+\beta_{r+1r+1}}{y\bar{y}}\right)$$
where $D_\ell$ is the discriminant of $\mathcal{K}_\ell/\mathbb{Q}_\ell$.
\item[(ii)] If $\beta=\left(\beta_{ij}\right)\in S_r(\mathbb{Q}_\ell)$, then $f_{v,\beta}(s,1)=0$ if $\beta\in S_r(\mathbb{Z}_\ell)^*$. If $\beta\in S_r(\mathbb{Z}_\ell)^*$ then
$$f_{v,\beta}'(s,\mathrm{diag}(A,{}^t\!\bar{A}^{-1}))=D_\ell^{-\frac{r(r-1)}{4}}\tau(\det A)|\det A\bar{A}|_\ell^{-s+\frac{r}{2}}e_\ell\left(\frac{\beta_{11}+\cdots+\beta_{rr}}{y\bar{y}}\right).$$
\end{itemize}
\end{lemma}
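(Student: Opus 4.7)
The strategy is to exploit the sharp support condition on $f^\dag$ (it vanishes off the ``big Bruhat cell'' $Q w_n N_Q(\mathbb{Z}_\ell)$) and reduce the defining Fourier integral to the Fourier transform of the indicator function of a translate of $S_n(\mathbb{Z}_\ell)$. The first move is a change-of-variable / conjugation argument: using the relation $w_n r(S)\,\mathrm{diag}(A,{}^t\bar A^{-1}) = \mathrm{diag}({}^t\bar A^{-1},A)\,w_n r(A^{-1}S\,{}^t\bar A^{-1})$, I would substitute $S = A S' {}^t\bar A$ (which has Jacobian $|\det(A\bar A)|_\ell^{n}$ on $S_n(\mathcal{K}_\ell)$) to pull $g=\mathrm{diag}(A,{}^t\bar A^{-1})$ past $w_n$. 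The $Q$-equivariance of $f^\dag$ then peels off the expected factor $\tau(\det A)\,|\det(A\bar A)|_\ell^{-s - n/2}$ from the diagonal part.

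Next, writing $\tilde\gamma_\ell = \begin{pmatrix}1 & T\\ 0 & 1\end{pmatrix}$ with $T$ the Hermitian matrix whose only nonzero block is $\tfrac{1}{y\bar y}\,1_r$ in the lower right (for part (ii), $T'=\tfrac{1}{y\bar y}\,1_r$), the commutativity of $N_Q$ gives $r(S')\,\tilde\gamma_\ell = r(S'+T)$, so the integrand becomes $f^\dag(s,w_n r(S'+T))$. A short matrix calculation shows $w_n r(X) \in Q w_n N_Q(\mathbb{Z}_\ell)$ if and only if $X \in S_n(\mathbb{Z}_\ell)$, and on that locus $f^\dag$ equals $1$. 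Shifting once more by $S'' = S'+T$ reduces the integral to
\[
e_\ell\bigl(\mathrm{Tr}({}^t\bar A \beta A\cdot T)\bigr)\int_{S_n(\mathbb{Z}_\ell)} e_\ell\bigl(-\mathrm{Tr}({}^t\bar A \beta A\cdot S'')\bigr)\,dS''.
\]
By orthogonality of additive characters on the lattice $S_n(\mathbb{Z}_\ell)$, the inner integral vanishes unless ${}^t\bar A \beta A\in S_n(\mathbb{Z}_\ell)^*$, and equals $\mathrm{Vol}(S_n(\mathbb{Z}_\ell))$ otherwise; under the self-dual additive measure this volume is exactly $D_\ell^{-n(n-1)/4}$. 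Combining with the Jacobian from the first step converts $|\det(A\bar A)|_\ell^{-s-n/2}$ into $|\det(A\bar A)|_\ell^{-s + n/2}$, and evaluating the phase against the explicit block-form of $T$ produces $e_\ell\bigl(\tfrac{\beta_{22}+\cdots+\beta_{(r+1)(r+1)}}{y\bar y}\bigr)$ in part (i).

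Part (ii) runs verbatim with $n=r$ and $T'=\tfrac{1}{y\bar y}\,1_r$ (so the trace extends over all diagonal entries of $\beta$), and the smaller Hermitian space $S_r$ produces the discriminant factor $D_\ell^{-r(r-1)/4}$. I expect the main obstacle to be careful bookkeeping of three distinct normalizations: the Jacobian $|\det(A\bar A)|_\ell^n$ of $S\mapsto A S {}^t\bar A$ on the $n^2$-dimensional $\mathbb{Q}_\ell$-space $S_n(\mathcal{K}_\ell)$, the self-dual volume of $S_n(\mathbb{Z}_\ell)$ producing the exponent $n(n-1)/4$ on the discriminant, and the fact that after the change of variable the Fourier coefficient at $\beta$ really tests whether ${}^t\bar A\beta A$ lies in the dual lattice -- so the stated support condition ``$\beta\in S_n(\mathbb{Z}_\ell)^*$'' (rather than the conjugate) is valid under the implicit assumption that $A\in GL_n(\mathcal{O}_\ell)$, which is the only case used downstream.
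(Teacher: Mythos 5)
Your proposal is correct and is essentially the argument the paper is invoking: the paper gives no proof here, deferring to \cite[Lemma 4.3.5]{WAN}, and that proof is precisely this big-cell computation --- peel off the Levi part by the substitution $S\mapsto AS'\,{}^t\bar A$ (Jacobian $|\det A\bar A|_\ell^{n}$ combining with the $Q$-equivariance factor to give the exponent $-s+n/2$), observe $w_nr(X)\in Qw_nN_Q(\mathbb{Z}_\ell)$ iff $X\in S_n(\mathbb{Z}_\ell)$, and reduce to the Fourier transform of the indicator of the shifted lattice, which yields the support condition, the volume $D_\ell^{-n(n-1)/4}$, and the phase $e_\ell(\mathrm{Tr}(\beta T))$. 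Your closing caveat is also apt: as literally stated the lemma suppresses the conjugation, and the general formula tests ${}^t\bar A\beta A$ and has phase $e_\ell(\mathrm{Tr}({}^t\bar A\beta A\,T))$, which reduces to the displayed expression only in the cases actually used.
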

The proof of \ref{lemma4.6} appears in \cite[Lemma 4.3.5]{WAN}.

\subsection{$p$-adic Computations}
Let $\tau_p=(\tau_1,\tau_2)$ be a character of $\mathcal{K}_p^\times = \left(\mathcal{K}\otimes\ZZ_p\right)^\times$, where we have identified $\mathcal{K}_p^\times$ with $\IQ_p^\times\times\IQ_p^\times$ (so $\tau_1$ is a character on the first factor and $\tau_2$ is a character on the second factor).  Suppose $(\tau_1\tau_2)$ has conductor $p$. Then we denote by $\tilde{f}_1\in I_n(\tau)$ the Siegel section supported in $Q(\mathbb{Q}_p)N(\mathbb{Z}_p)$ that takes the value $1$ on the identity and is invariant under right action of $N(\mathbb{Z}_p)$.

\subsubsection{Stabilizations}
  Let $(\chi_1,\ldots,\chi_n)$ be an $r$-tuple of characters of $\mathbb{Q}_p^\times$ whose conductors divide $(p)$. Suppose $(\chi_1,\ldots,\chi_n)$ is regular in the sense of Casselman \cite{casselman}.  (This is a condition to guarantee that the induced representation is irreducible and that by changing the order of the characters we still get the same representation.)  Let $\pi=I(\chi_1,\ldots,\chi_n)$ be the corresponding principal series representation. We consider the space $\pi^{\Gamma_1(p)}$ of vectors invariant under $\Gamma_1(p)$. By the Bruhat decomposition, this is a space of dimension $n!$. By a {\it stabilization} of $\pi$ we mean a vector $v\in\pi$ which is an eigenvector for all the Hecke operators $U_{t_i}$ (depending on a weight $\underline{k}$. This is equivalent to choosing an ordering of the $\chi_i$'s.
(Recall from Definition \ref{defkappai}, that the $\kappa_i$'s are defined in terms of the $\underline{k}$.)
\begin{proposition}
Let $\chi_1,\chi_2,\ldots,\chi_n$ be $n$ characters of $\mathbb{Q}_p^\times$ and $\pi\simeq I(\chi_1,\ldots,\chi_n)$ be the principle series representation of $\mathrm{GL}_n(\mathbb{Q}_p)$. We identify $\pi$ with the model of $\mathrm{Ind}_{B(\mathbb{Q}_p)}^{GL_n(\mathbb{Q}_p)}(\chi_1\otimes\cdots\otimes \chi_n)\cdot\delta_B)$.  Suppose $v\in\pi$ is the function on $B(\mathbb{Q}_p)w_\ell\Gamma_1(p)$ which is $1$ on $w_\ell$ (where $w_\ell$ denotes the longest Weyl element) and is invariant under the action of $N(\mathbb{Z}_p)$. Then $v$ is an eigenvector for the Hecke operators $U_{t_i}$ for $i=1,2,\ldots,n$ with Hecke eigenvalues
$$\chi_1\ldots\chi_i(p^{-1})p^{\kappa_1+\cdots+\kappa_i}.$$
\end{proposition}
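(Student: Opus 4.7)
The plan is to compute the action of $U_{t_i}$ on $v$ directly by decomposing the Atkin--Lehner double coset into $N(\mathbb{Z}_p)$-cosets and using that $v$ is supported on the single Bruhat cell $B(\mathbb{Q}_p)w_\ell\Gamma_1(p)$ to collapse the Hecke sum to a single term.

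First, I would decompose $N(\mathbb{Z}_p)\alpha_i N(\mathbb{Z}_p) = \bigsqcup_U N(\mathbb{Z}_p)\alpha_{i,U}$ as a disjoint union of left $N(\mathbb{Z}_p)$-cosets. Relative to the parabolic with Levi $\mathrm{GL}_{n-i}\times\mathrm{GL}_i$ one has $N = N_i^{-}\cdot N_i^{+}$, and the identities $\alpha_i N_i^{-}\alpha_i^{-1} = N_i^{-}$ together with $\alpha_i^{-1} N_i^{+}(\mathbb{Z}_p)\alpha_i = N_i^{+}(p\mathbb{Z}_p)$ yield a set of $p^{i(n-i)}$ representatives $\alpha_{i,U} = \alpha_i \cdot n(U)$, where $n(U)$ is upper-unitriangular with upper-right $(n-i)\times i$ block equal to $U$ and $U$ ranges over $M_{(n-i)\times i}(\mathbb{Z}_p/p\mathbb{Z}_p)$.

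Second, I would evaluate $(v\mid t_i)(w_\ell) = \sum_U v(w_\ell \alpha_{i,U}^{-1})$ and show only the term $U = 0$ contributes. Setting $\alpha_i' := w_\ell\alpha_iw_\ell^{-1} = \mathrm{diag}(p\cdot 1_i,\, 1_{n-i})$, one rearranges
\begin{align*}
w_\ell \, n(U)^{-1}\alpha_i^{-1} \;=\; \alpha_i'^{\,-1}\, w_\ell \, (1 - p^{-1}\widetilde U),
\end{align*}
where $\widetilde U$ is $U$ relocated to the upper-right block after transport through the two conjugations. The factor $1 - p^{-1}\widetilde U$ is upper-unitriangular with entries in $p^{-1}\mathbb{Z}_p$; trying to absorb it into $\Gamma_1(p)$ by left-multiplying the whole element by a suitable $b\in B$ leads, after an Iwasawa/Bruhat calculation, to the integrality condition $AU \equiv 0 \pmod{p}$ for some invertible matrix $A$, forcing $U \equiv 0$. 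Thus $v(w_\ell\alpha_{i,U}^{-1}) = 0$ for $U \ne 0$.

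Third, the surviving $U=0$ term is computed from the left-$B$ transformation law: since $\alpha_i'^{\,-1} \in T$,
\begin{align*}
v(w_\ell\alpha_i^{-1}) \;=\; v(\alpha_i'^{\,-1}w_\ell) \;=\; \chi_1(p^{-1})\cdots\chi_i(p^{-1}) \cdot \delta_B^{1/2}(\alpha_i'^{\,-1}) \cdot v(w_\ell) \;=\; \chi_1\cdots\chi_i(p^{-1}) \cdot p^{i(n-i)/2},
\end{align*}
using $\delta_B(\alpha_i'^{\,-1}) = p^{i(n-i)}$. Combining with the normalization $v\|t_i = \delta_B(\alpha_i)^{-1/2}p^{\kappa_1+\cdots+\kappa_i}(v\mid t_i)$ and $\delta_B(\alpha_i) = p^{i(n-i)}$ produces the asserted eigenvalue $\chi_1\cdots\chi_i(p^{-1})\, p^{\kappa_1+\cdots+\kappa_i}$. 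To promote the numerical identity at $w_\ell$ to the operator identity $v\|t_i = \lambda\, v$, I would note that $U_{t_i}$ preserves right $\Gamma_1(p)$-invariance and rerun the support analysis at a general $g = bw_\ell k$, $k\in\Gamma_1(p)$, to show that $v\mid t_i$ remains supported on $Bw_\ell\Gamma_1(p)$; the induced representation transformation law then forces the two sides to coincide.

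The main obstacle is the support-vanishing argument in the second step: while the mechanism is transparent — conjugating $n(U)$ through $\alpha_i$ introduces a $p^{-1}$ factor that cannot be absorbed into $\Gamma_1(p)$ unless $U\equiv 0\pmod{p}$ — the Iwasawa/Bruhat bookkeeping for a general $k\in\Gamma_1(p)$ (as opposed to $k = 1$) requires careful use of the Iwahori factorization of $\Gamma_1(p)$ to expose the obstruction precisely in the upper-right block.
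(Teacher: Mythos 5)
Your computation is correct and is essentially the standard Casselman-style calculation of the Iwahori--Hecke action on a principal series vector supported on the big cell; the paper itself gives no argument for this proposition but simply cites \cite[Lemma 4.4.2]{WAN}, so your write-up supplies the proof that the paper defers. Two remarks. First, a normalization point you handle implicitly but should make explicit: the paper's model is written as $\mathrm{Ind}_B^{\GL_n}((\chi_1\otimes\cdots\otimes\chi_n)\cdot\delta_B)$, but the stated eigenvalue $\chi_1\cdots\chi_i(p^{-1})p^{\kappa_1+\cdots+\kappa_i}$ only comes out if the left transformation law carries $\delta_B^{1/2}$ (normalized induction), which is what you use; with the full $\delta_B$ the answer would acquire an extra $p^{i(n-i)/2}$. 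Second, your last step (rerunning the support analysis at a general $g=bw_\ell k$) can be shortened: since $\Gamma_1(p)=\bar N(p\ZZ_p)T(1+p\ZZ_p)N(\ZZ_p)$ and $\alpha_i^{-1}\bar N(p\ZZ_p)\alpha_i\subseteq\bar N(p\ZZ_p)$, $\alpha_i^{-1}N(\ZZ_p)\alpha_i\subseteq N(\ZZ_p)$, one gets $B w_\ell\Gamma_1(p)\alpha_i N(\ZZ_p)\subseteq Bw_\ell N(\ZZ_p)=Bw_\ell\Gamma_1(p)$ directly, so every translate $\pi(\alpha_{i,U}^{-1})v$ is already supported on the big cell and $v\mid t_i$ is pinned down by its value at $w_\ell$. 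Likewise your step-two vanishing is cleanest phrased as: $w_\ell n(-p^{-1}U)=\bar n(V)w_\ell$ with $V$ the transported block, and $\bar n(V)\in Bw_\ell\Gamma_1(p)w_\ell^{-1}=B\bar N(\ZZ_p)$ forces $V\in M(\ZZ_p)$ because $B\bar N(\ZZ_p)\cap\bar N=\bar N(\ZZ_p)$, i.e. $U\equiv 0\pmod p$. With these tightenings the argument is complete.
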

\begin{proof}
See \cite[Lemma 4.4.2]{WAN}.
\end{proof}
Now suppose we have a weight $(a_1,\cdots,a_r,0;\kappa)$ for $U(r+1,1)$. Let $w_r=\begin{pmatrix}&1\\1_r&\end{pmatrix}\in M_{r+1,r+1}$.
\begin{proposition}\label{prop4.9}
Let the notation be as above. Let $P\subset \mathrm{GL}_{r+2}$ be the maximal parabolic consisting of matrices such that the below- or left-to-diagonal entries of the first column and of the last row are $0$. Suppose $\pi(\chi_1,\ldots,\chi_r)$ is an unramified principal series representation such that the $\chi_i$'s are pairwise distinct. Let $\tau_1,\tau_2$ be two characters of $\mathbb{Q}_p^\times$ with conductor $(p)$ such that $\mathrm{cond}(\tau_1\tau_2)=(p)$ as well. Consider $I(\tau_2^{-1}, \chi_1,\ldots,\chi_r,\tau_1)$, and identify it with the induced representation $\mathrm{Ind}_{P(\mathbb{Q}_p)}^{GL_{r+2}(\mathbb{Q}_p)}((\tau_2^{-1}\otimes \pi\otimes\tau_1)\cdot\delta_P)$. Then there is $v\in\pi$ a unique up to scalar stabilized vector such that the eigenvalues for $U_{t_i}$ are $a_{p,i}=\chi_1\cdots\chi_i(p^{-1})p^{\kappa_1+\cdots\kappa_i}$ for $i=1,\ldots,r$ such that if $v'\in I(\rho)$ is the function on $GL_{r+2}(\mathbb{Z}_p)$ supported in $P(\mathbb{Z}_p)w_r\Gamma_1(p)$ such that $v'(w_r)=v$ and $v'$ is invariant under $\Gamma_1(p)$, then $v'$ is an eigenvector for Hecke operators $U_{t_i}$, with eigenvalues $a_{p,i}$,\ldots,$a_{p,r}$, $a_{p,r}\tau_1(p)^{-1}p^{-\frac{r+\kappa}{2}}$, $a_{p,r}\tau_1(p)^{-1}\tau_2(p)p^{\kappa-r-1}$.
\end{proposition}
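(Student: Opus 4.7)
The plan is to reduce to the preceding proposition, applied to the full principal series on $\GL_{r+2}(\IQ_p)$ via transitivity of parabolic induction, and then to unpack the resulting Hecke eigenvalues.

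First, by transitivity of induction there is a canonical isomorphism of $\GL_{r+2}(\IQ_p)$-representations
$$I(\rho)=\mathrm{Ind}_{P(\IQ_p)}^{\GL_{r+2}(\IQ_p)}\bigl((\tau_2^{-1}\otimes\pi\otimes\tau_1)\cdot\delta_P\bigr)\;\xrightarrow{\sim}\;\mathrm{Ind}_{B(\IQ_p)}^{\GL_{r+2}(\IQ_p)}\bigl((\tau_2^{-1}\otimes\chi_1\otimes\cdots\otimes\chi_r\otimes\tau_1)\cdot\delta_B\bigr),$$
sending $f\mapsto\tilde f$ with $\tilde f(g):=(f(g))(1_r)$, where we regard $f(g)\in\pi$ as a function on $\GL_r(\IQ_p)$. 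My first task would be to identify the image of $v'$ under this isomorphism. Using that $v'$ is supported on $P(\ZZ_p)w_r\Gamma_1(p)$ with $v'(w_r)=v$, and that by the preceding proposition applied to $\pi$ the vector $v$ is supported on $B_r(\IQ_p)w_\ell^{(r)}\Gamma_1^{(r)}(p)$ with $v(w_\ell^{(r)})=1$, together with the fact that the product $w_r\cdot m(1,w_\ell^{(r)},1)$ equals the longest Weyl element $w_\ell^{(r+2)}\in\GL_{r+2}$ up to an element of $B(\IQ_p)$, one verifies directly that $\tilde v'$ is supported on the single big Iwahori-Bruhat cell $B(\IQ_p)w_\ell^{(r+2)}\Gamma_1(p)$ and takes the value $1$ at $w_\ell^{(r+2)}$.

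Next, I would apply the preceding proposition directly to $\tilde v'$, with $n=r+2$ and character tuple $(\tau_2^{-1},\chi_1,\ldots,\chi_r,\tau_1)$. This immediately yields that $\tilde v'$, and hence $v'$, is a simultaneous eigenvector for all of $U_{t_1},\ldots,U_{t_{r+2}}$, with eigenvalues of the form $\psi_1\cdots\psi_i(p^{-1})\,p^{\kappa_1+\cdots+\kappa_i}$, where the character product runs over the first $i$ entries of the tuple and the $\kappa_j$'s come from Definition \ref{defkappai} applied to $\underline k=(a_1,\ldots,a_r,0;\kappa)$ on $U(r+1,1)$.

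Finally, I would rewrite these eigenvalues using two elementary identities for the $\kappa$'s that follow by inspection from Definition \ref{defkappai}: $\kappa_1+\kappa_2=\kappa$, and the shift $\kappa_{j+2}$ for $\underline k$ on $U(r+1,1)$ equals the $j$-th $\kappa$-value associated to the restricted weight $(a_1,\ldots,a_r)$ on $U(r,0)$, for $j=1,\ldots,r$. Combined with the conductor hypotheses on $\tau_1,\tau_2$, these allow one to rearrange the eigenvalue expressions so that, for $1\le i\le r$, the contributions of $\tau_1,\tau_2$ collapse and one recovers $a_{p,i}$; for $i=r+1,r+2$ the remaining factors assemble into the claimed extras $\tau_1(p)^{-1}p^{-(r+\kappa)/2}$ and $\tau_2(p)\tau_1(p)^{-1}p^{\kappa-r-1}$. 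The main obstacle is the Bruhat-cell bookkeeping in step one: one must carefully check that $w_r$ (from the parabolic description of $v'$) really lines up with $w_\ell^{(r+2)}$ (without any unexpected twist by shorter Weyl elements) under the transitivity-of-induction isomorphism, so that the preceding proposition can be invoked verbatim. The subsequent eigenvalue manipulations are then routine computations with the normalizations from Definition \ref{defkappai}, following the pattern of the argument used in the proof of the preceding proposition in \cite[Lemma 4.4.2]{WAN}.
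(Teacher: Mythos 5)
There is a genuine gap, and it sits precisely in the step you dismiss as ``routine'': the eigenvalue bookkeeping at the end. Under transitivity of induction the natural identification is with $I(\tau_2^{-1},\chi_1,\ldots,\chi_r,\tau_1)$ \emph{in that order}, so if your support computation were correct and $\tilde v'$ really were the big-cell vector of this ordered principal series, the preceding proposition would give $U_{t_i}$-eigenvalues equal to the partial products of the tuple $(\tau_2^{-1},\chi_1,\ldots,\chi_r,\tau_1)$; in particular the $U_{t_1}$-eigenvalue would be $\tau_2^{-1}(p^{-1})p^{\kappa_1}=\tau_2(p)p^{\kappa_1}$, which involves $\tau_2$ and not $\chi_1$. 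The eigenvalues asserted in Proposition \ref{prop4.9} are instead the partial products of the tuple $(\chi_1,\ldots,\chi_r,\tau_1,\tau_2^{-1})$: one checks directly that $a_{p,i}$ for $i\le r$, then $a_{p,r}\tau_1(p)^{-1}p^{-\frac{r+\kappa}{2}}$, then $a_{p,r}\tau_1(p)^{-1}\tau_2(p)p^{\kappa-r-1}$ correspond to successively multiplying in $\chi_1,\ldots,\chi_r,\tau_1,\tau_2^{-1}$ evaluated at $p^{-1}$. No identity among the $\kappa_j$'s from Definition \ref{defkappai} can convert a partial product beginning with $\tau_2^{-1}$ into one beginning with $\chi_1$ --- the character parts, not just the powers of $p$, disagree --- so the ``contributions of $\tau_1,\tau_2$ collapse'' step cannot be carried out. (Indeed, if both your support claim and your eigenvalue computation went through, you would have \emph{disproved} the proposition.)

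The idea your proposal is missing is the re-ordering of the characters via an intertwining operator, which is the heart of the paper's argument. The paper starts from the big-cell vector of the principal series with the \emph{re-ordered} tuple (the one whose partial products give the stated eigenvalues), uses Casselman regularity of $(\tau_2^{-1},\chi_1,\ldots,\chi_r,\tau_1)$ to know that the intertwining operator to $I(\tau_2^{-1},\chi_1,\ldots,\chi_r,\tau_1)$ is an isomorphism preserving Hecke eigenvalues, and only \emph{afterwards} identifies the image with the $v'$ of the statement: the right $\Gamma_0(p)$-action forces the image to be supported in $P(\ZZ_p)w_r\Gamma_0(p)$, the Hecke action of $U_{t_1},\ldots,U_{t_r}$ forces its value at $w_r$ to be a stabilized vector with eigenvalues $a_{p,i}$, and uniqueness of that stabilization (from the $\chi_i$'s being pairwise distinct) pins it down as $v$ up to scalar. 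In particular the image of the re-ordered big-cell vector under the intertwining operator is \emph{not} the big-cell vector of the target ordering, so your step-one identification of $\tilde v'$ with the single-cell function cannot be squared with the claimed eigenvalues; the intertwining operator is not optional bookkeeping but the mechanism that produces the correct eigenvector.
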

\begin{proof}
By assumption $(\tau_2^{-1},\chi_1,\cdots,\chi_r,\tau_1)$ is regular in the sense of Casselman. By re-ordering the characters properly the image of the vector provided by the last proposition under certain intertwining operator is a Hecke eigenvector with the eigenvalues given in the proposition. We prove it is the $v'$ described in the proposition. By checking the right action of $\Gamma_0(p)$ on $v'$, we see that it has to be supported in $P(\mathbb{Z}_p)w_r\Gamma_0(p)$. By checking the Hecke actions of $U_{p,i},\ldots,U_{p,r}$, we see that $v'(1)=v$ is a Hecke eigenvector with eigenvalues $a_{p,i}$. The uniqueness follows from the assumption that the $\chi_i$'s are pairwise distinct. The proposition follows.
\end{proof}

\begin{remark}
Note that we can allow critical slope, as long as it is finite.  We require that the level of $\varphi$ has to be at most $(p)$ (no deeper level), because it is otherwise difficult to prove that the pullback section is an eigenvector for the $U_p$-operators.
\end{remark}

\subsubsection{Pullback Formulas}

  Suppose $\tau_p=(\tau_1,\tau_2)$ is such that each of $\tau_1$, $\tau_2$, and  $\tau_1\tau_2$ has conductor $(p)$.  We let\footnote{In \cite{WAN}, $\xi_1^\dag$ is used to denote a product of $\tau_2$ times another character.   In the present situation, though, that character is trivial; so $\xi_1^\dag = \tau_2$.} $\xi_1^\dag=\tau_2$.  Definition \ref{expression} is a special case of part of \cite[Corollary 4.4.29]{WAN}.
\begin{definition}\label{expression}
We define
$$f_{v,\sieg}=f^0_p(s,g):=p^{-\sum_{i=1}^{r} i}\mathfrak{g}(\xi_1^\dag)^s
\times\sum_{A}\prod_{i=1}^r\bar\xi_1^\dag(\det A)\tilde{f}_1\left(s,g\begin{pmatrix}1_{r+1}&\begin{matrix}0&A\\0&0\end{matrix}\\&1_{r+1}\end{pmatrix}\right)$$
where $A$ runs through the set of $r\times r$ matrices
$$\begin{pmatrix}1&\cdots&m_{1r}\\&1&\cdots\\&&1\end{pmatrix}\begin{pmatrix}x_1&&\\&\ddots&\\&&x_r\end{pmatrix}\begin{pmatrix}1&&\\\cdots&1&\\n_{r1}&\cdots&1\end{pmatrix}$$
with $x_i\in p^{-t}\mathbb{Z}_p^\times\mathrm{mod}\ \mathbb{Z}_p$, $m_{ij},n_{ij}\in\mathbb{Z}_p/p\mathbb{Z}_p$.
We also define a Siegel section
$$f_{v,\sieg}'=f^{0'}_p(s,g)=p^{-\sum_{i=1}^{r} i}\mathfrak{g}(\xi_1^\dag)^a
\times\sum_{A}\prod_{i=1}^r\bar\xi_1^\dag(\det A)\tilde{f}_1\left(s,g\begin{pmatrix}1_{r}&A\\&1_{r}\end{pmatrix}\right)$$
for $A$ as above.
\end{definition}

We define elements $\Upsilon\in U(r+1,r+1)(\mathbb{Q}_p)$ and $\Upsilon'\in U(r,r)(\mathbb{Q}_p)$ such that $\Upsilon_{v_0}=S_{v_0}$ and $\Upsilon'_{v_0}=S_{v_0}'$.

\begin{proposition}\label{prop4.10}
Let the notation be as in Proposition \ref{prop4.9}. Let $\varphi\in\pi_p$ be a stabilized vector with Hecke eigenvalues $a_{p,i}$ for $U_{p,i}$ $(i=1,\ldots,r)$ provided in the last proposition. Then $F_\varphi(f^0;s,-)$ is the Klingen section supported in $P(\mathbb{Q}_p)w_r\Gamma_0(p)$ such that the right action of $\Gamma_0(p)$ is given by $\tau_2^{-1}(g_{r+1,r+1})\tau_1(g_{r+2,r+2})$ (where $g_{i, i}$ denotes the $i$-th diagonal entry of $g$), and such that
$$F_\varphi\left(f^0_p(s, -\Upsilon);s,w_r\right)=p^{\frac{\kappa r}{2}-\frac{r(r+1)}{2}}\mathfrak{g}(\tau_{1}^{-1})^r\prod_{i=1}^r(\chi_{i}\tau_{1})(p)\prod_{i=1}^r(\chi_{i}^{-1}\tau_{2})(p)
\bar{\tau}^c((p^r,1))\tau'(p^{-1})p^{\kappa-r}\mathfrak{g}(\bar{\tau}')^{-1}\varphi.$$
\end{proposition}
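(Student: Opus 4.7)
The plan is to follow the strategy used for the analogous computations in \cite{WAN}, adapting them to the present setting with vector-valued weights and the specific stabilized vector provided by Proposition \ref{prop4.9}. I would first unwind the definition of the pullback
\begin{align*}
F_\varphi(f^0_p; s, g) = \int_{U(0,r)(\mathbb{Q}_p)} f^0_p\bigl(s, S^{-1}\mathrm{diag}(g, g_1) S\bigr)\, \bar\tau(\det g_1)\, \varphi(g_1)\, dg_1,
\end{align*}
and use the explicit description of $f^0_p$ from Definition \ref{expression} as a finite sum of $\tilde{f}_1$ translated by the unipotent elements associated to the matrices $A$. Pre-composing with $\Upsilon^{-1}$ switches to the split coordinates at $p$ coming from the identification $\mathcal{K}_p\simeq \mathbb{Q}_p\times\mathbb{Q}_p$; in these coordinates the integrand becomes a finite sum over $A$ of products of values of $\tilde{f}_1$ against translates of $\varphi$, which can be analyzed one $A$ at a time.

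Next I would establish the support statement. Since $\tilde{f}_1$ is supported on $Q(\mathbb{Q}_p) N(\mathbb{Z}_p)$, contributions to $F_\varphi(f^0_p; s, g)$ only arise from $g$ in the specific Bruhat cell that, after the twist by $\Upsilon$ and the translation by $A$, pushes $\mathrm{diag}(g, g_1)$ into that support. A Bruhat-type analysis analogous to the one carried out in \cite[Section 4.4]{WAN} forces $g$ to lie in $P(\mathbb{Q}_p) w_r \Gamma_0(p)$. The right action of $\Gamma_0(p)$ is then read off from how $\tilde{f}_1$ transforms under the Borel-level decomposition of the shifted argument, producing the scalar $\tau_2^{-1}(g_{r+1,r+1})\tau_1(g_{r+2,r+2})$, and the assertion that $F_\varphi(f^0_p;s,-)$ is the claimed Klingen section then follows from combining this with the invariance properties of the stabilized vector $\varphi$ guaranteed by Proposition \ref{prop4.9}.

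For the explicit evaluation at $w_r$ (with the extra twist by $\Upsilon$), the integral reduces to a finite sum over the $A$-parameters. The summation over the off-diagonal entries $m_{ij}$, $n_{ij}$ collapses using the orthogonality of additive characters and the fact that $\varphi$ is invariant under $N(\mathbb{Z}_p)\cap\Gamma_1(p)$, leaving only the diagonal entries $x_i\in p^{-t}\mathbb{Z}_p^\times\,\mathrm{mod}\,\mathbb{Z}_p$. The sum $\sum_{x_i} \bar\xi_1^\dag(x_1\cdots x_r)\, e_p(\sum x_i/p^t)$ assembles (after the substitution $x_i\mapsto p^{-t}x_i^{-1}$) into $\mathfrak{g}(\tau_1^{-1})^r$. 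Pairing this with the Hecke eigenvalue computation of Proposition \ref{prop4.9}, together with the modulus character factor and the normalization $p^{-\sum i}\mathfrak{g}(\xi_1^\dag)^s$ in the definition of $f^0_p$, produces $p^{\frac{\kappa r}{2} - \frac{r(r+1)}{2}}\mathfrak{g}(\tau_1^{-1})^r\prod(\chi_i\tau_1)(p)\prod(\chi_i^{-1}\tau_2)(p)$; the remaining factor $\bar\tau^c((p^r,1))\tau'(p^{-1})p^{\kappa-r}\mathfrak{g}(\bar\tau')^{-1}$ comes from the conjugation by $\Upsilon$ and the central-character contribution in evaluating $F_\varphi$ at $w_r$.

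The hard part will be bookkeeping. The normalizations of Haar measures on $U(0,r)(\mathbb{Q}_p)$, the volumes of the relevant stabilizer subgroups, the Gauss sum conventions, and the sign conventions for $w_r$ versus the longest Weyl element all have to be matched consistently with \cite{WAN}. I also expect the delicate place to be verifying that the sum over $A$ really reproduces exactly $\mathfrak{g}(\tau_1^{-1})^r$ rather than some other product of Gauss sums: this relies on the pairwise distinctness of the $\chi_i$ (to force uniqueness of the stabilized vector) and on the precise combinatorial interaction between the $x_i$-integrations and the action of $\varphi$ on the transformed arguments. Once these are in hand, the formula follows by assembling the pieces.
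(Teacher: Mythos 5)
Your plan is a direct unwinding of the pullback integral; the paper does something different. Its proof of Proposition \ref{prop4.10} is a one-line reduction to the argument following Remark 4.4.2 of \cite{WAN}, whose essential mechanism is explicitly ``the trick of functional equations'': rather than evaluating the pullback of $f^0_p$ at $w_r$ directly, one evaluates the pullback of an intertwined section (accessible from the Fourier-coefficient computations, cf.\ Lemma \ref{lemma4.12}) and transfers the answer back through the local functional equation. Parts of your direct route are plausible --- the support statement and the $\Gamma_0(p)$-transformation law can indeed be read off from the support of $\tilde f_1$ and a Bruhat analysis, much as in the $\ell\neq p$ computations --- but the plan has a concrete gap at exactly the factor that matters most.

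The factor $\tau'(p^{-1})p^{\kappa-r}\mathfrak{g}(\bar\tau')^{-1}$ is the only difference between Proposition \ref{prop4.10} and Proposition \ref{prop411}, and it is precisely the $p$-adic multiplier of the Kubota--Leopoldt $L$-function $\mathcal{L}^\Sigma_{\bar\tau'}$ that makes Theorem \ref{thisprop-proposition}(ii) work. It cannot ``come from the conjugation by $\Upsilon$ and the central-character contribution,'' as you assert: conjugation by a fixed matrix and central-character evaluations contribute monomial factors such as $\bar\tau^c((p^r,1))$, and cannot produce a Gauss sum of the character $\bar\tau'=(\tau_1\tau_2)^{-1}|_{\IQ_p^\times}$. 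That Gauss sum is a local $\epsilon$-factor: it is the gamma factor of the rank-one (abelian) piece produced by the extra unipotent/intertwining integration that distinguishes the $U(r+1,r+1)$ pullback evaluated at $w_r$ from the $U(r,r)$ pullback evaluated at $1$ --- exactly what the functional-equation trick is designed to extract. A direct computation would have to exhibit the additive-character sum over that extra direction yielding $\mathfrak{g}(\bar\tau')^{-1}$ together with the Euler-type factor $\tau'(p^{-1})p^{\kappa-r}$; your accounting of the $A$-sum produces only $\mathfrak{g}(\tau_1^{-1})^r$ (and even that needs care, since $\bar\xi_1^\dag=\tau_2^{-1}$ in Definition \ref{expression}, so the naive $x_i$-sums give Gauss sums attached to $\tau_2^{-1}$ rather than $\tau_1^{-1}$). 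Finally, the direct route must also address convergence: the pullback integral converges only for $\mathrm{Re}(s)$ large, and unless the support conditions genuinely reduce it to a finite sum at the argument in question (not obvious for $f^0_p$, a sum of translates of $\tilde f_1$ by unipotents with $p^{-t}$-denominators), the evaluation must pass through meromorphic continuation --- which is again what the functional equation supplies.
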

\begin{proof}
The proof is similar to the argument after \cite[Remark 4.4.2]{WAN}. The proof uses the trick of functional equations.
\end{proof}
  We define
\begin{align}\label{fpkling-equ}
f_{p,\Kling}(s, g):=F_\varphi\left(f^{0}_p;s,g\right).
\end{align}
It follows from the above two propositions that this is an eigenvector of $U_{t_i}$'s for each $i$ with non-zero eigenvalues. The following proposition can be proved similarly.

\begin{proposition}\label{prop411}
Let the notation be the same as in Proposition \ref{prop4.10}. Let $\varphi\in\pi_p^{\Gamma_1(p)}$ be a stabilized vector with Hecke eigenvalues $a_{p,i}$ for $U_{t_i}$ $(i=1,\ldots,r)$. Then
$$F_\varphi'(f^{0'}_p(s, -\Upsilon');s,1)=p^{\frac{\kappa r}{2}-\frac{r(r+1)}{2}}\mathfrak{g}(\tau_{1}^{-1})^r\prod_{i=1}^r(\chi_{i}\tau_{1})(p)\prod_{i=1}^r(\chi_{i}^{-1}\tau_{2})(p)
\bar{\tau}^c((p^r,1))\varphi.$$
\end{proposition}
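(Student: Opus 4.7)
The plan is to mimic the proof of Proposition~\ref{prop4.10}, simplifying it to reflect that here we work with the embedding $\gamma'\colon GU(r,0)\times U(0,r)\hookrightarrow GU(r,r)$ rather than $\gamma\colon GU(r+1,1)\times U(0,r)\hookrightarrow GU(r+1,r+1)$. In particular, there is no Klingen parabolic structure to track on the $GU(r,0)$ side, so the pullback will produce a scalar multiple of $\varphi$ directly rather than a Klingen section.

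First I would unfold the definition of $F_\varphi'$ from Equation~\eqref{Fvarphi-equation} at $g=1$, writing
$$F_\varphi'(f^{0'}_p(s,-\Upsilon');s,1)=\int_{U(0,r)(\mathbb{Q}_p)}f^{0'}_p\bigl(s,S'^{-1}\mathrm{diag}(1,g_1)S'\Upsilon'\bigr)\bar\tau(\det g_1)\varphi(g_1)\,dg_1,$$
and then substitute the explicit sum for $f^{0'}_p$ from Definition~\ref{expression}. Since $\tilde f_1$ is supported on $Q(\mathbb{Q}_p)N(\mathbb{Z}_p)$, for each matrix $A$ in the sum the integrand vanishes unless the Bruhat decomposition of $S'^{-1}\mathrm{diag}(1,g_1)S'\Upsilon'\begin{pmatrix}1_r&A\\&1_r\end{pmatrix}$ lands in this support; this cuts the integration over $g_1$ down to an open cell and reduces the computation to a sum indexed by the Hecke data.

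Next I would apply the functional-equation trick used after \cite[Remark~4.4.2]{WAN}: the remaining integral is reinterpreted as an intertwining operator applied to the simpler section $\tilde f_1$, paired against the stabilized vector $\varphi$. Using that $\varphi$ is the $U_{t_i}$-eigenvector with eigenvalues $a_{p,i}=\chi_1\cdots\chi_i(p^{-1})p^{\kappa_1+\cdots+\kappa_i}$ provided by Proposition~\ref{prop4.9}, the sum over the diagonal entries $x_i$ of $A$ produces $r$ Gauss-sum factors $\mathfrak{g}(\tau_1^{-1})$; the twist $\prod\bar\xi_1^\dag(\det A)=\prod\bar\tau_2(\det A)$ combined with the eigenvalue action yields $\prod_{i=1}^r(\chi_i\tau_1)(p)(\chi_i^{-1}\tau_2)(p)$; the factor $\bar\tau(\det g_1)$ evaluated at the Hecke shift contributes $\bar\tau^c((p^r,1))$; the modulus character specialized at the relevant Levi element contributes $p^{\kappa r/2}$; and the normalization constant $p^{-\sum_{i=1}^r i}=p^{-r(r+1)/2}$ from Definition~\ref{expression} supplies the remaining power of $p$.

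The main obstacle will be verifying the bookkeeping of constants, in particular the discrepancy with the $GU(r+1,1)$ case of Proposition~\ref{prop4.10}: there, extra factors $\tau'(p^{-1})p^{\kappa-r}\mathfrak{g}(\bar\tau')^{-1}$ appear from the action on the additional rank-one block of the Klingen Levi and from the intertwining over the extra unipotent coordinates present in $GU(r+1,r+1)$ but not in $GU(r,r)$. One must check carefully that the symmetric embedding $\gamma'$ does not produce analogous factors: because $\gamma'$ pairs the two copies of $U(r,0)$ without any extra rank-one piece, the intertwining integral is one dimension shorter, and no corresponding factor is generated. Up to this careful check, the argument is a direct transposition of the calculation performed in the proof of Proposition~\ref{prop4.10} (and of \cite[Proposition~4.4.29]{WAN}) to the lower-rank setting, with attention paid to the preservation of the support and eigenvalue properties of $\varphi$ under $\Gamma_1(p)$.
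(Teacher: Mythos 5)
Your proposal matches the paper's approach: the paper proves this proposition only by the remark that it ``can be proved similarly'' to Proposition \ref{prop4.10}, whose own proof is the functional-equation trick from the argument after \cite[Remark 4.4.2]{WAN}, which is exactly the strategy you lay out. Your additional bookkeeping of which factors disappear in the $GU(r,r)$ setting (the absence of the extra rank-one Levi block and hence of the factors $\tau'(p^{-1})p^{\kappa-r}\mathfrak{g}(\bar{\tau}')^{-1}$) is a correct and more explicit elaboration of what the paper leaves implicit.
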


  \subsubsection{Fourier Coefficients}

  We define the set $\mathfrak{X}=\mathfrak{X}_{\xi_1^\dag}$ to be the set of $r\times r$ matrices $x$ with $\mathbb{Z}_p$-coefficients such that $x_{11}\in\mathbb{Z}_p^\times$, $\det\begin{pmatrix}x_{11}&x_{12}\\x_{21}&x_{22}\end{pmatrix}\in\mathbb{Z}_p^\times$,\ldots,$\det x\in\mathbb{Z}_p^\times$, i.e. all the (determinants of the) upper left minors are in $\ZZ_p^\times$.  We define $\Phi_{\xi_1^\dag}$ to be the function on the space $M_{r\times r}(\ZZ_p)$ of $r\times r$ matrices with coefficients in $\ZZ_p$ such that $\Phi_{\xi_1^\dag}(x)=0$ if $x\not\in\mathfrak{X}$ and $\Phi_{\xi_1^\dag}(x)=\xi_1^\dag(\det x)$ if $x\in\mathfrak{X}$.

Proofs of the following two lemmas (Lemmas \ref{lemma4.12} and \ref{lemma4.13}) appear in \cite[Lemma 4.4.30]{WAN}.
\begin{lemma}\label{lemma4.12}
Suppose $\beta = \begin{pmatrix}\beta_{11}&\hdots& \beta_{1,r+1}\\
\vdots & \vdots & \vdots\\
\beta_{r+1, 1}&\hdots& \beta_{r+1, r+1}
\end{pmatrix}\in S_{r+1}\left(\IQ_p\right)$ is such that $\det\beta\not=0$. Let $\tilde{\beta}$ be the $r\times r$ matrix $\begin{pmatrix}\beta_{12}&\cdots&\beta_{1r+1}\\\cdots&\cdots&\cdots\\ \beta_{r2}&\cdots&\beta_{rr+1}\end{pmatrix}$.  If $\beta\not\in S_{r+1}(\mathbb{Z}_p)$, then $f^0_\beta(s,1)=0$. If $\beta\in S_{r+1}(\mathbb{Z}_p)$ then
$$f_\beta^0(s,1)=\bar{\tau}'(\det\beta)|\det\beta|_p^{2s}\mathfrak{g}(\tau')^{r+1}c_{r+1}(\bar{\tau}',-s)\Phi_{\xi_1^\dag}({}^t\!\tilde{\beta})$$
where $c_n(\tau',s)=\tau'(p^{n})p^{2ns-n(n+1)/2}$.
\end{lemma}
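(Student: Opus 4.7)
The plan is to unwind the definition of $f^0_p$ in the Fourier integral, reduce everything to the Fourier coefficient of the elementary Siegel section $\tilde{f}_1$, and then recognize the resulting character sum over $A$ as the function $\Phi_{\xi_1^\dag}({}^t\tilde{\beta})$.

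First I would substitute the formula for $f^0_p$ from Definition \ref{expression} into the Fourier integral and swap the finite sum over $A$ with the integration in $S$. Writing $u(A)$ for the unipotent factor in Definition \ref{expression}, the matrix $u(A)$ lies in $N_Q(\mathbb{Q}_p)$, so $r(S)u(A) = r(S+X(A))$, where $X(A)$ denotes the upper-right Hermitian block attached to $A$. Translating $S\mapsto S - X(A)$ then factors each summand as
\[
\bar\xi_1^\dag(\det A)\, e_p\!\bigl(\operatorname{tr}(\beta X(A))\bigr)\, \tilde{f}_{1,\beta}(s,1),
\]
so the whole Fourier coefficient splits into a product of (i) the ``base'' Fourier coefficient $\tilde f_{1,\beta}(s,1)$, and (ii) a finite character sum over $A$.

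Next I would compute $\tilde{f}_{1,\beta}(s,1)$ directly from the support and normalization of $\tilde f_1$. Using the Bruhat factorization $w_{r+1}r(S) = q(S)\,n(S)$ on the open cell where $S$ is invertible, the condition that $\tilde f_1$ be supported on $Q(\mathbb{Q}_p)N(\mathbb{Z}_p)$ (together with invariance under $N(\mathbb{Z}_p)$ on the right) pins the support of the integrand to $\beta\in S_{r+1}(\mathbb{Z}_p)$ and produces, after the standard change of variables $S\mapsto \beta^{-1}S\beta^{-1}$ (or directly evaluating $\tau(\det A_{q(S)})|\det A_qD_q^{-1}|^{s+n/2}$ and pairing with the additive character), the desired factor
\[
\bar\tau'(\det\beta)\,|\det\beta|_p^{2s}\,\mathfrak{g}(\tau')^{r+1}\,c_{r+1}(\bar\tau',-s).
\]
This step uses only the restriction $\tau' = \tau_1\tau_2$ of $\tau$ to $\mathbb{Q}_p^\times$ and the fact that $\operatorname{cond}(\tau_1\tau_2)=(p)$, so a Gauss sum $\mathfrak{g}(\tau')$ appears once for each of the $r+1$ diagonal blocks of the Bruhat factorization.

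It then remains to show
\[
p^{-\sum_i i}\,\mathfrak{g}(\xi_1^\dag)^{?}\sum_A \bar\xi_1^\dag(\det A)\, e_p\!\bigl(\operatorname{tr}(\beta X(A))\bigr) \;=\; \Phi_{\xi_1^\dag}({}^t\tilde\beta),
\]
where $\tilde\beta$ is the $r\times r$ block of $\beta$ specified in the statement and the sum runs over the Bruhat-style parameter set described in Definition \ref{expression}. Because $X(A)$ places $A$ in the upper-right $r\times r$ block of a Hermitian matrix, the trace collapses to a pairing between ${}^t\tilde\beta$ and $A$. Splitting $A = U_1 D U_2$ according to the given decomposition, the sums over the unipotent coordinates $m_{ij},n_{ij}\in \mathbb{Z}_p/p\mathbb{Z}_p$ produce $\delta$-functions forcing the corresponding entries of ${}^t\tilde\beta$ to lie in $\mathbb{Z}_p$, while the sum over the diagonal part $D$ (with $x_i\in p^{-t}\mathbb{Z}_p^\times/\mathbb{Z}_p$) together with the character $\bar\xi_1^\dag(\det A)$ forces the successive upper-left minors of ${}^t\tilde\beta$ to lie in $\mathbb{Z}_p^\times$ and yields exactly $\xi_1^\dag(\det{}^t\tilde\beta)$ on this locus; the remaining coordinates off this support give zero by orthogonality of additive characters. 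Matching constants (the factor $p^{-\sum i}$ together with the Gauss-sum powers that arise from pulling out the $x_i$) recovers the claimed formula.

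The main obstacle is the last paragraph: the combinatorial identification of the character sum with $\Phi_{\xi_1^\dag}({}^t\tilde\beta)$. One must carefully stratify ${}^t\tilde\beta$ according to which upper-left minors are units versus non-units, check that the nested sums over $U_1$, $D$, $U_2$ precisely mirror this stratification, and track the Haar-measure normalizations so that the prefactor $p^{-\sum_i i}\mathfrak{g}(\xi_1^\dag)^{?}$ reduces cleanly to $1$. This bookkeeping is exactly what is carried out in \cite[Lemma 4.4.30]{WAN}, whose argument transports verbatim to the present setting.
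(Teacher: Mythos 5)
Your proposal is correct and follows essentially the same route as the paper, which itself gives no independent argument but simply defers to \cite[Lemma 4.4.30]{WAN}: unwinding Definition \ref{expression} inside the Fourier integral, translating $S\mapsto S-X(A)$ to isolate the Bruhat-cell computation of $\tilde f_{1,\beta}(s,1)$ (yielding the $\bar\tau'(\det\beta)|\det\beta|_p^{2s}\mathfrak{g}(\tau')^{r+1}c_{r+1}(\bar\tau',-s)$ factor and the integrality condition on $\beta$), and then identifying the remaining character sum over $A=U_1DU_2$ with $\Phi_{\xi_1^\dag}({}^t\tilde\beta)$ is exactly the argument carried out there. Your honest flagging of the ambiguous exponent on $\mathfrak{g}(\xi_1^\dag)$ and of the minor-stratification bookkeeping as the delicate points is consistent with where the actual work lies in that reference.
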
\label{lemma4.13}

\begin{lemma}
Suppose $\beta\in S_{r\times r}(\mathbb{Q}_p)$ is such that $\det\beta\not=0$. Then if $\beta\not\in S_{r}(\mathbb{Z}_p)$ then $f^{0'}_\beta(s,1)=0$. If $\beta\in S_{r+1}(\mathbb{Z}_p)$ then
$$f_\beta^{0'}(s,1)=\bar{\tau}'(\det\beta)|\det\beta|_p^{2s}\mathfrak{g}(\tau')^{r}c_{r}(\bar{\tau}',-s)\Phi_{\xi_1^\dag}({}^t\!{\beta}).$$
\end{lemma}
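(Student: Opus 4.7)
The plan is to imitate the proof of Lemma \ref{lemma4.12}, adjusting for the fact that we are working on $U(r,r)$ rather than $U(r+1,r+1)$. The local Fourier coefficient to compute is
$$f^{0'}_\beta(s,1) = \int_{S_r(\mathbb{Q}_p)} f^{0'}_p\!\left(s,\, w_r \begin{pmatrix} 1_r & S \\ & 1_r \end{pmatrix}\right) e_p(-\mathrm{Tr}(\beta S))\, dS,$$
so I would begin by substituting the expression for $f^{0'}_p$ from Definition \ref{expression}, which rewrites $f^{0'}_\beta(s,1)$ as a finite sum, indexed by the parametrization of $A$, of integrals of the standard section $\tilde{f}_1$. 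Using the identity $w_r \begin{pmatrix} 1_r & S \\ & 1_r \end{pmatrix} \begin{pmatrix} 1_r & A \\ & 1_r \end{pmatrix} = w_r \begin{pmatrix} 1_r & S+A \\ & 1_r \end{pmatrix}$ and the substitution $S \mapsto S - A$, each of these integrals reduces to one basic integral of $\tilde{f}_1$, now weighted by the additive character evaluated at $\mathrm{Tr}(\beta A)$.

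Next I would invoke the formula for the Fourier coefficient of $\tilde{f}_1$: because $\tilde{f}_1$ is supported in $Q(\mathbb{Q}_p) N(\mathbb{Z}_p)$ and takes value $1$ on the identity coset, the Bruhat decomposition of $w_r \begin{pmatrix} 1_r & S \\ & 1_r \end{pmatrix}$ forces $S$ to be invertible with $S^{-1} \in M_r(\mathbb{Z}_p)$, and the resulting integral against $e_p(-\mathrm{Tr}(\beta S))$ produces $\bar{\tau}'(\det\beta)|\det \beta|_p^{2s}$ times the characteristic function of $\beta \in S_r(\ZZ_p)$. This handles the vanishing assertion for $\beta \notin S_r(\ZZ_p)$ simultaneously. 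It remains to evaluate the weighted sum over $A$ with weight $\bar{\xi}_1^\dag(\det A)\, e_p(\mathrm{Tr}(\beta A))$. The three-factor Iwahori-style factorization of $A$ as upper unipotent $\cdot$ diagonal $\cdot$ lower unipotent separates this into a product: the contribution from the $r$ diagonal entries $x_i \in p^{-t}\ZZ_p^\times$ assembles (once the normalization $p^{-\sum_i i}\mathfrak{g}(\xi_1^\dag)^a$ is absorbed) to the constant $\mathfrak{g}(\tau')^r c_r(\bar{\tau}', -s)$, while the contribution from the unipotent entries $m_{ij}, n_{ij}$, by Fourier-dual analysis against the corresponding entries of $\beta$, produces precisely $\Phi_{\xi_1^\dag}({}^t\!\beta)$.

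The main obstacle will be the combinatorial bookkeeping in this last step: one must carefully match the three-factor factorization of $A$ to the ordered unit condition on upper-left minors that defines $\Phi_{\xi_1^\dag}$, and gather the scattered powers of $p$ from each factor to obtain $c_r(\bar{\tau}',-s) = \bar{\tau}'(p^r)\, p^{-2rs - r(r+1)/2}$. This is structurally identical to the computation carried out in \cite[Lemma 4.4.30]{WAN} for the $(r+1)$-dimensional case treated in Lemma \ref{lemma4.12}; since the dimensions enter only through the number of summation variables and the exponent of $\mathfrak{g}(\tau')$, that argument transposes essentially verbatim with the obvious replacement of $r+1$ by $r$.
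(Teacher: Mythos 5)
Your proposal is correct and follows essentially the same route as the paper, which in fact offers no proof of its own here but defers both this lemma and Lemma \ref{lemma4.12} to \cite[Lemma 4.4.30]{WAN}: that argument is exactly the unfolding you describe (substitute Definition \ref{expression}, shift $S\mapsto S+A$, apply the big-cell Fourier coefficient of $\tilde f_1$ to get the support condition and the factor $\bar\tau'(\det\beta)|\det\beta|_p^{2s}$, then evaluate the character sum over $A$ to produce $\mathfrak{g}(\tau')^{r}c_r(\bar\tau',-s)\Phi_{\xi_1^\dag}({}^t\!\beta)$). The only caveat is that your sketch delegates the one genuinely delicate step --- matching the Iwahori factorization of $A$ against the upper-left-minor conditions defining $\Phi_{\xi_1^\dag}$ --- to the cited computation, which is also what the paper does.
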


\subsection{The sections at $\infty$}
We first define Siegel sections and give the associated Fourier coefficients (Sections \ref{ssections} and \ref{fcoef}, respectively).  Then we use pullback integrals for both the Klingen Eisenstein case and the $p$-adic $L$-functions cases (Sections \ref{pb1} and \ref{pb2}, respectively).
\subsubsection{Siegel Sections}\label{ssections}
We define $\mathbf{i} = \begin{pmatrix}i &\\&\frac{\zeta}{2}\end{pmatrix}$ and $\tilde{\mathbf{i}}:=i 1_{r+1}$, $\mathbf{i}'=\frac{\zeta}{2}$, $\tilde{\mathbf{i}}'=i1_r$.  We define auxiliary sections $\tilde{f}_\kappa\in I_{r+1}(\tau)$ and $\tilde{f}_\kappa'\in I_r(\tau)$.
The Siegel sections we choose are $\tilde{f}_\kappa(g,s):=J_{r+1}(g,\tilde{\mathbf{i}})^{-\kappa}|J_{r+1}(g,\tilde{\mathbf{i}})|^{\kappa-2s-r-1}$ and $\tilde{f}_\kappa'(g,s)=J_r(g, \tilde{\mathbf{i}})^{-\kappa}|J_r(g,\tilde{\mathbf{i}})|^{\kappa-2s-r}$.  We also define sections $f_\kappa$ and $f_\kappa'$ similarly to $\tilde{f}_\kappa$ and $\tilde{f}_\kappa'$ but with $\tilde{\mathbf{i}}$ replaced by $\mathbf{i}$.

\subsubsection{Fourier Coefficients}\label{fcoef}
\begin{lemma}\label{Fourier}
Suppose $\beta\in S_n(\mathbb{R})$. Then the function $\rightarrow \tilde{f}_{\kappa,\beta}(s,g)$ has a meromorphic continuation to all of $\mathbb{C}$.
Suppose $\kappa\geq n$.  Then  $\tilde{f}_{\kappa,\beta}(s,g)$ is holomorphic at $z_\kappa:=(\kappa-n)/2$.  Also, for $y\in \mathrm{GL}_n(\mathbb{C}),\tilde{f}_{\kappa,\beta}(z_\kappa,\mathrm{diag}(y,{}^t\!\bar{y}^{-1}))=0$ if $\det\beta\leq 0$; and if $\det\beta>0$, then
$$\tilde{f}_{\kappa,\beta}(z_\kappa,\mathrm{diag}(y,{}^t\!\bar{y}^{-1}))=\frac{(-2)^{-n}(2\pi i)^{n\kappa}(2/\pi)^{n(n-1)/2}}{\prod_{j=0}^{n-1}(\kappa-j-1)!}e(i\mathrm{Tr}(\beta y{}^t\!\bar{y}))\det(\beta)^{\kappa-n}\det\bar{y}^\kappa.$$
\end{lemma}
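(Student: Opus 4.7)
The plan is to follow Shimura's classical analysis of archimedean Fourier coefficients (see \cite[Section 18]{Shi97} and compare \cite[Lemma 11.4.6]{SU}), reducing the general case to a fundamental Fourier-transform evaluation on hermitian matrices.

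The first step is to reduce to $g = 1_{2n}$. I would write the Fourier coefficient as
\begin{align*}
\tilde{f}_{\kappa,\beta}(s,g) = \int_{S_n(\mathbb{R})} \tilde{f}_\kappa\!\left(s,\, w_n\, n(S)\, g\right) e_\infty(-\mathrm{Tr}(\beta S))\, dS,
\end{align*}
with $n(S) = \begin{pmatrix}1_n & S\\ 0 & 1_n\end{pmatrix}$. The matrix identity
\begin{align*}
w_n\, n(S)\, \mathrm{diag}(y,\,{}^t\!\bar{y}^{-1}) = \mathrm{diag}({}^t\!\bar{y}^{-1},\, y)\, w_n\, n(y^{-1} S\,{}^t\!\bar{y}^{-1}),
\end{align*}
together with the left $Q$-equivariance of $\tilde{f}_\kappa$ under the Siegel Levi and the substitution $S \mapsto y S\,{}^t\!\bar{y}$ (whose Jacobian on $S_n(\mathbb{R})$ equals $|\det y|^{2n}$), yields
\begin{align*}
\tilde{f}_{\kappa,\beta}(s,\mathrm{diag}(y,\,{}^t\!\bar{y}^{-1})) = \det(y)^{-\kappa}\, |\det y|^{\kappa - 2s + n}\, \tilde{f}_{\kappa,\,\beta'}(s,\, 1_{2n}),
\end{align*}
where $\beta' := {}^t\!\bar{y}\beta y$. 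Since $\det(\beta') = |\det y|^2 \det(\beta)$ and $\mathrm{Tr}(\beta') = \mathrm{Tr}(\beta y\,{}^t\!\bar{y})$, and the positivity of $\beta$ is preserved, it suffices to treat $g = 1_{2n}$.

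For $g = 1_{2n}$ the cocycle evaluates to $J(w_n n(T),\tilde{\mathbf{i}}) = -(T + i 1_n)$, hence
\begin{align*}
\tilde{f}_\kappa(s, w_n\, n(T)) = (-1)^{n\kappa}\, \det(T + i 1_n)^{-\kappa}\, |\det(T + i 1_n)|^{\kappa - 2s - n}.
\end{align*}
This integrand is absolutely integrable over $S_n(\mathbb{R})$ for $\mathrm{Re}(s)$ sufficiently large and admits a meromorphic continuation in $s$ via the gamma-type regularization in \cite[Section 18]{Shi97}. At $s = z_\kappa = (\kappa - n)/2$ the exponent $\kappa - 2s - n$ vanishes, reducing matters to the classical integral
\begin{align*}
\int_{S_n(\mathbb{R})} \det(T + i 1_n)^{-\kappa}\, e_\infty(-\mathrm{Tr}(\beta' T))\, dT,
\end{align*}
which converges absolutely (giving holomorphy at $z_\kappa$) precisely when $\kappa \geq n$.

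The final substantive step is Shimura's Fourier-transform formula for the confluent hypergeometric kernel on hermitian matrices: the above integral vanishes unless $\beta'$ is positive definite, and for $\beta' > 0$ it equals an explicit constant multiple of $e_\infty(i \mathrm{Tr}(\beta'))\det(\beta')^{\kappa - n}$. Substituting back and using $\det(y)^{-\kappa}|\det y|^{2\kappa} = \det(\bar{y})^\kappa$, together with $\det(\beta') = |\det y|^2 \det\beta$ and $\mathrm{Tr}(\beta') = \mathrm{Tr}(\beta y\,{}^t\!\bar{y})$, recovers the stated formula (noting $\det \beta > 0 \Leftrightarrow \det\beta' > 0$ since $|\det y|^2 > 0$). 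The main obstacle is pinning down the explicit constant $\frac{(-2)^{-n}(2\pi i)^{n\kappa}(2/\pi)^{n(n-1)/2}}{\prod_{j=0}^{n-1}(\kappa-j-1)!}$; my approach would be to diagonalize $\beta'$ by a unitary substitution on $S_n(\mathbb{R})$, expand $\det(T + i)^{-\kappa}$ via the identity $\Gamma_n(\kappa)^{-1}\int_{Y > 0} e^{-\mathrm{Tr}((T + i)Y)}\det(Y)^{\kappa - n}\,dY$, interchange the order of integration to evaluate a Gaussian in $T$, and then track the normalizations against the one-dimensional contour integral $\int(x+i)^{-\kappa}e^{-2\pi i \lambda x}\, dx$ -- or else quote the constant directly from Shimura's tables.
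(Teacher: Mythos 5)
The paper gives no proof of Lemma \ref{Fourier}: it is imported as a known result (it is essentially \cite[Lemma 11.4.7]{SU}, which in turn rests on Shimura's analysis of confluent hypergeometric functions on Hermitian tube domains in \cite{Shi97}). Your architecture is the standard one and the reduction to $g=1_{2n}$ is correct: the matrix identity, the Jacobian $|\det y|^{2n}$, and the final bookkeeping $\det(y)^{-\kappa}|\det y|^{2\kappa}=\det(\bar y)^{\kappa}$, $\det(\beta')=|\det y|^{2}\det\beta$, $\mathrm{Tr}(\beta')=\mathrm{Tr}(\beta y\,{}^t\!\bar y)$ all check out against the stated formula.

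There is one genuine error: the claim that the integral $\int_{S_n(\mathbb{R})}\det(T+i1_n)^{-\kappa}e_\infty(-\mathrm{Tr}(\beta'T))\,dT$ ``converges absolutely \dots precisely when $\kappa\geq n$.'' Here $S_n(\mathbb{R})$ is the space of $n\times n$ Hermitian matrices over $\mathcal{K}_\infty\simeq\mathbb{C}$, of real dimension $n^2$, and absolute convergence requires $\kappa>2n-1$; equivalently, $z_\kappa=(\kappa-n)/2$ lies in the region of absolute convergence of $\tilde f_{\kappa,\beta}(s,1)$ only for $\kappa\geq 2n$. For $n\leq\kappa\leq 2n-1$ (a range the lemma must cover, since the paper only assumes $\kappa>r+1$) the value at $z_\kappa$ exists only via meromorphic continuation, and the asserted holomorphy there is a nontrivial output of Shimura's theory: one rewrites $\tilde f_{\kappa,\beta}(s,1)$ in terms of his function $\omega(y,h;\alpha,\beta)$ and uses that $\Gamma_n(\beta)^{-1}\omega(y,h;\alpha,\beta)$ is entire in $(\alpha,\beta)$, together with its explicit value when the second parameter specializes to $0$. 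Your proof should invoke this rather than convergence. Relatedly, the constant you defer is indeed the delicate point, but it cannot be ``quoted from Shimura's tables'' without first fixing the normalization of the measure $dS$ on $S_n(\mathbb{R})$: the factors $(-2)^{-n}$ and $(2/\pi)^{n(n-1)/2}$ are exactly the ones sensitive to that choice (already at $n=1$ a naive Lebesgue-measure residue computation gives $(2\pi i)^{\kappa}h^{\kappa-1}e^{-2\pi h}/(\kappa-1)!$, differing from the stated constant by $-1/2$), so the measures must be matched to those of \cite{SU} and \cite{Shi97} before the constant can be transcribed.
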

Taking $y$ to be the diagonal matrix such that the entry in the diagonal is a square root of the corresponding entry of $i^{-1}{\mathbf{i}}$, we obtain the Fourier coefficients of $f_\kappa$ and $f_\kappa'$ at the identity from those for $\tilde{f}_\kappa$ and $\tilde{f}_\kappa'$.\\

\subsubsection{Pullback integrals: the Klingen Eisenstein series case}\label{pb1}
  Let $\underline{k}=(a_1,\ldots,a_r)$ be a weight for $U(r,0)$ with $a_r\geq 0$, and let $\pi_\infty$ be the corresponding finite dimensional representation of $U(r,0)(\mathbb{R})$. Suppose $\tau_\infty$ is of infinity type $z\mapsto z^{-\frac{\kappa}{2}}\bar{z}^{\frac{\kappa}{2}}$, where $\kappa>r+1$ is a positive integer. Let $z_\kappa=\frac{\kappa}{2}-\frac{r+1}{2}$. Consider the standard representation
\begin{align*}
V_d:=(St_{\GL_{r+1}}\boxtimes St_{\GL_{r+1}})^d
\end{align*}
 of $GL_{r+1}\times GL_{r+1}$ and
 \begin{align*}
 V_{\kappa,d}=V_d\otimes (1\boxtimes \det)^\kappa.
 \end{align*}
 Recall that $d=2(a_1+\cdots+a_r)$. Note that \cite[Appendix]{Shi00} gives another interpretation of the $C^\infty$-differential operators $D_\kappa^d$ as a vector of elements in the enveloping algebra of $\mathfrak{u}(r+1,r+1)(\mathbb{R})$, which we denote by $D_\kappa^d$ as well.  Thus we can apply $D_\kappa^d$ to $f_\kappa\in I_{r+1}(\tau)$ to obtain an element $D_\kappa^d f_\kappa\in (I_{r+1}(\tau)\otimes V_{\kappa,d})^K$. Under the natural embedding
\begin{align}\label{emeddingq}
GL_{r+1}\times GL_1\times GL_r&\hookrightarrow GL_{r+1}\times GL_{r+1}\\
(\alpha, \beta, \gamma)&\mapsto \left(\alpha, \begin{pmatrix}\beta & 0\\ 0 & \gamma\end{pmatrix}\right),
\end{align}
the representation $L^{(\underline{k},0;\kappa)}\boxtimes (L^{(\underline{k})}\otimes\det^\kappa)$ (here $L^{(\underline{k},0;\kappa)}$ means $L^{(\underline{k},0)}\boxtimes L^\kappa$) shows up as a summand of $V_{\kappa,d}$. Let $f_\infty^{L^{(\underline{k},0;\kappa)}\boxtimes (L^{(\underline{k})}\otimes\det^\kappa)}$ be the $L^{(\underline{k},0;\kappa)}\boxtimes (L^{(\underline{k})}\otimes\det^\kappa)$-valued function obtained by pulling back $D_\kappa^df_\kappa$ to $U(r+1,1)\times U(0,r)$ by $\gamma$ and taking the summand corresponding to $L^{(\underline{k},0;\kappa)}\boxtimes (L^{(\underline{k})}\otimes\det^\kappa)$.

  We consider the representation $I(\rho_\infty)=I(\rho_\infty,z_\kappa)$ of $U(r+1,1)(\mathbb{R})$. As in \cite[1.4]{SU06},
$$(V_\pi^{\smfin}\otimes L^{\underline{k}})^{K^{(r,0)}_\infty}=(V_\pi^{\smfin}\otimes L^{(\underline{k},0;\kappa)})^{K^{(r,0)}_\infty}.$$
Here we use the superscript $(r,s)$ to denote that it is the maximal compact subgroup for $U(r,s)(\mathbb{R})$ and ``$\smfin$'' denotes the smooth vectors that are $K_\infty$-finite. Moreover, by Frobenius reciprocity we have a canonical isomorphism:
$$(I(\rho_\infty)\otimes L^{(\underline{k},0;\kappa)})^{K^{(r+1,1)}_\infty}\simeq (V_\pi^{\smfin}\otimes L^{(\underline{k},0;\kappa)})^{K^{(r,0)}_\infty}.$$
We let $\varphi_{(\underline{k},0;\kappa)}\in I(\rho_\infty)\otimes L^{(\underline{k},0;\kappa)}$ be the element corresponding to $\varphi\in (V_\pi^{sm,fin}\otimes L^{(\underline{k},0;\kappa)})^{K^{(r,0)}_\infty}$ under the above isomorphism.\\

\begin{definition}\label{csubscriptdef1}
Let $\varphi_{\infty}\in(\pi_\infty\otimes L^{(\underline{k})})^{U(\mathbb{R})}$. We define a $L^{(\underline{k},0;\kappa)}\boxtimes (L^{(\underline{k})}\otimes\det^\kappa)$-valued section
$$F_{\varphi_\infty}(g)=\int_{U(0,r)(\mathbb{R})}\langle f_{\infty}^{L^{(\underline{k},0;\kappa)}\boxtimes (L^{(\underline{k})}\otimes\det^\kappa)}(\gamma(g,g_1))\bar{\tau}(g_1),\pi(g_1)\varphi_{\infty}\rangle dg_1.$$
This is in $(I(\rho)\otimes (L^{(\underline{k},0;\kappa)}\boxtimes (L^{(\underline{k})}\otimes\det^\kappa)))^{K_\infty}$ and is a constant times $\varphi_{(\underline{k},0;\kappa)}$ defined above. We denote this constant by $c_{(\underline{k},0;\kappa)}$ and define $f_{v,Kling}:=F_{\varphi_\infty}$.
\end{definition}
\begin{lemma}\label{Harris}
Under the above situation, suppose the Klingen Eisenstein series is in the absolute convergence range for $P$. Then in part (ii) of Lemma \ref{constant}, we have $A(\rho,f,z_\kappa)_{-z_\kappa}=0$.
\end{lemma}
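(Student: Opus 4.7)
The plan is to deduce the vanishing of $A(\rho,F_{\varphi_\infty},z_\kappa)_{-z_\kappa}$ from the pullback construction together with the holomorphy of the scalar-weight $\kappa$ Siegel Eisenstein series at $s=z_\kappa$, separated from the would-be nonvanishing piece by an archimedean $K_\infty$-type analysis.

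First, I would apply the pullback identity of Proposition \ref{pullbackidentities-prop}(ii) to express the Klingen Eisenstein series $E(F_{\varphi_\infty},s,g)$ as the integral of the Siegel Eisenstein series $E(D_\kappa^d f_\kappa,s,\gamma(g,g_1h))$ against $\bar{\tau}(\det g_1h)\varphi(g_1h)\,dg_1$ over $U(0,r)(\mathbb{Q})\backslash U(0,r)(\mathbb{A}_{\mathbb{Q}})$, after projection onto the summand $L^{(\underline{k},0;\kappa)}\boxtimes(L^{(\underline{k})}\otimes\det^\kappa)$. At $s=z_\kappa=(\kappa-r-1)/2$, Lemma \ref{Fourier} ensures that $f_\kappa$ is the archimedean component of a classical holomorphic scalar-weight $\kappa$ Siegel modular form on $GU_{r+1}$; applying $D_\kappa^d$ produces a nearly holomorphic form whose projection to the chosen representation summand is holomorphic by construction of the algebraic differential operator (compare the discussion in Section \ref{geomforms-section} and the definition of $D_{(\underline{k},0,\kappa)}$). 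Pulling back along $\gamma$ preserves this holomorphy on the $U(r+1,1)$ factor, so $E(F_{\varphi_\infty},z_\kappa,-)$ is a holomorphic Klingen Eisenstein series of weight $(\underline{k},0;\kappa)$.

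Next, I would invoke Lemma \ref{constant}(ii), which in the absolute convergence range gives the constant term
$E(F_{\varphi_\infty},s,-)_P=(F_{\varphi_\infty})_s+A(\rho,F_{\varphi_\infty},s)_{-s}$,
whose two summands lie in $I(\rho,s)$ and $I(\rho^\vee,-s)$ respectively. Using the Frobenius reciprocity identification
$(I(\rho_\infty)\otimes L^{(\underline{k},0;\kappa)})^{K^{(r+1,1)}_\infty}\simeq(V_\pi^{\smfin}\otimes L^{(\underline{k},0;\kappa)})^{K^{(r,0)}_\infty}$
recorded before Definition \ref{csubscriptdef1}, the vector $F_{\varphi_\infty}$ corresponds to the unique (up to scalar) holomorphic $K_\infty^{(r+1,1)}$-type in the source. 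The image of $A(\rho_\infty,z_\kappa,-)$ lies in $I(\rho_\infty^\vee,-z_\kappa)$, where the Levi now carries the contragredient $\pi^\vee\otimes(\tau\circ\det)$ against $\bar{\tau}^c$, and for $\kappa>r+1$ as assumed this contragredient induced representation does not contain the holomorphic $K_\infty^{(r+1,1)}$-type of weight $(\underline{k},0;\kappa)$. Since $E(F_{\varphi_\infty},z_\kappa,-)$ is holomorphic, its constant term sits entirely in the holomorphic $K_\infty$-isotypic component, forcing $A(\rho,F_{\varphi_\infty},z_\kappa)_{-z_\kappa}=0$ by linear independence.

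The main obstacle will be making rigorous the $K_\infty^{(r+1,1)}$-type decomposition for arbitrary vector-valued weight $\underline{k}=(a_1,\ldots,a_r)$: one must track the action of the differential operator $D_\kappa^d$ and the projection onto $L^{(\underline{k},0;\kappa)}\boxtimes(L^{(\underline{k})}\otimes\det^\kappa)$ through the Frobenius reciprocity isomorphism, and verify that the resulting $K_\infty$-type does not reappear in $I(\rho_\infty^\vee,-z_\kappa)\otimes L^{(\underline{k},0;\kappa)}$. This is the vector-valued analogue of Shimura's and Harris's classical computation for scalar-weight Siegel sections in the pullback method, and the bookkeeping, although standard in spirit, is the substantive input at the archimedean place.
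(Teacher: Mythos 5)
Your proposal is correct in outline, but it is worth noting that the paper does not actually argue this lemma at all: its proof consists of the citation to \cite[Theorem 2.4.5]{HarrisESeries} together with the observation that the archimedean section chosen here is exactly the one treated there. What you have written is essentially a reconstruction of the argument underlying that cited theorem (holomorphy of the pulled-back Eisenstein series at $s=z_\kappa$, plus the observation that the two summands $f_s$ and $A(\rho,f,s)_{-s}$ of the constant term in Lemma \ref{constant}(ii) live in $I(\rho,s)$ and $I(\rho^\vee,-s)$ and cannot both be compatible with holomorphy when $s\neq -s$), so in substance you are taking the same route, just unpacking the black box. Two caveats. First, your key archimedean claim should not be phrased as the absence of the $K_\infty^{(r+1,1)}$-type $L^{(\underline{k},0;\kappa)}$ in $I(\rho_\infty^\vee,-z_\kappa)$: as a $K_\infty$-module an induced representation is determined by the restriction of the inducing data to $P\cap K_\infty$, and the relevant $K$-type can perfectly well occur there. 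What distinguishes the two terms is the exponent along the split center of the Levi ($\tfrac{a+2b+1}{2}+z_\kappa$ versus $\tfrac{a+2b+1}{2}-z_\kappa$); holomorphy of weight $(\underline{k},0;\kappa)$ forces the first exponent, and since $z_\kappa>0$ in the range of absolute convergence, a nonzero $\mathfrak{p}^-$-annihilated vector of that $K$-type with the second exponent cannot exist. That exponent comparison, not a $K$-type multiplicity count, is the substantive step. Second, you invoke the holomorphy of $E(F_{\varphi_\infty},z_\kappa,-)$, which in the paper is Proposition \ref{questionprop} and appears \emph{after} Lemma \ref{Harris}; there is no circularity because Proposition \ref{questionprop} is proved independently (following \cite[p.~480]{SU06}), but you should say so explicitly if you present the lemmas in the paper's order.
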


\begin{proof}
See \cite[Theorem 2.4.5]{HarrisESeries}.  Note that our section is exactly the one chosen in \emph{loc.cit}.
\end{proof}
(See \cite[(2.5.1.3)]{HarrisESeries} for details on the range of absolute convergence for the parabolic subgroup $P$.)

We are going to define a vector-valued Eisenstein series as follows. Let $v_1,\ldots,v_n$ be a basis for $L^{(\underline{k},0;\kappa)}$, and let $\varphi_{(\underline{k},0;\kappa)}=\sum_i\varphi_{(\underline{k},0,\kappa),i}\otimes v_i$. Let the vector-valued section $f=\otimes_vf_v\in I(\rho)\otimes L^{(\underline{k},0;\kappa)}$ be such that $f_\infty=\varphi_{(\underline{k},0;\kappa)}$. We define
\begin{align*}
E(f,s,g)=\sum_i E(f_i,s,g)\otimes v_i
\end{align*}
 and for $h\in   U(r+1,1)(\mathbb{A}_f)$, a classical (compared to adelic)
 \begin{align}
 E(f;s,Z,h)=\rho_{(\underline{k},0;\kappa)}(J(g,i))E(f;s,gh)
 \end{align}
  for $g\in U(r+1,1)(\mathbb{R})$ such that $g(i)=Z$. The following proposition can be proved in the same way as \cite[Page 480]{SU06}.
\begin{proposition}\label{questionprop}
Let the assumptions be as above. Then $E(f;s,Z,h)$ is a $L^{(\underline{k},0;\kappa)}$-valued holomorphic modular form.
\end{proposition}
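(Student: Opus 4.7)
The plan is to follow the argument of \cite[p. 480]{SU06}, exploiting the fact that by Definition \ref{csubscriptdef1}, the Archimedean section $\varphi_{(\underline{k},0;\kappa)}$ was constructed from the scalar weight Siegel section $f_\kappa\in I_{r+1}(\tau)$ by applying the differential operator $D_\kappa^d$, pulling back via $\gamma$ to $U(r+1,1)\times U(0,r)$, and projecting onto the summand $L^{(\underline{k},0;\kappa)}\boxtimes (L^{(\underline{k})}\otimes\det^\kappa)$ of $V_{\kappa,d}$. Accordingly, I would deduce holomorphy of $E(f;s,Z,h)$ at $s=z_\kappa$ from the classical holomorphy of a scalar weight Siegel Eisenstein series on $GU(r+1,r+1)$, transported to $U(r+1,1)$ via pullback.

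Concretely, I would first form the Siegel Eisenstein series $E(\tilde f_\kappa,s,-)$ on $GU(r+1,r+1)$ built from $f_\kappa$ at infinity and the matching non-Archimedean sections used to define the global $f$. By classical results of Shimura (\cite{Shi97,Shi00}), this is a holomorphic scalar weight $\kappa$ modular form at $s=z_\kappa$. Viewing $D_\kappa^d$ as an element of the universal enveloping algebra of $\mathfrak{u}(r+1,r+1)(\mathbb{R})$, I would then apply $D_\kappa^d$ to obtain a nearly holomorphic $V_{\kappa,d}$-valued automorphic form, and pull back via $\gamma$ to $U(r+1,1)\times U(0,r)$. Since $U(0,r)$ is definite, its Hermitian domain is a point, so the resulting object is nearly holomorphic on $X_{r+1,1}$ and merely a function on the finite set underlying $U(0,r)$. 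I would then project to the highest-weight summand $L^{(\underline{k},0;\kappa)}\boxtimes(L^{(\underline{k})}\otimes\det^\kappa)$; the essential geometric input is that this projection turns the nearly holomorphic pullback into a \emph{genuinely holomorphic} vector-valued form on $X_{r+1,1}$, which is the content of Shimura's theory of holomorphic projection via differential operators, as recorded in \cite[Theorem 2.4.5]{HarrisESeries}.

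Pairing the resulting holomorphic form against $\bar\tau(g_1)\varphi(g_1)$ and integrating over $U(0,r)(\IQ)\backslash U(0,r)(\mathbb{A}_\IQ)$ — which is a finite sum since $U(0,r)$ is definite — preserves holomorphy in the $X_{r+1,1}$-variable. By Proposition \ref{pullbackidentities-prop}(ii), this pairing reproduces $\sum_{\gamma\in P(\IQ)\backslash G(\IQ)} F_\varphi(f;s,\gamma g)$ in the range of absolute convergence, which is the Klingen Eisenstein series $E(f,s,g)$. Invoking Lemma \ref{Harris} to rule out the intertwining contribution to the constant term along $P$, the classical renormalization $\rho_{(\underline{k},0;\kappa)}(J(g,\mathbf{i}))E(f;s,gh)$ is then holomorphic in $Z=g(\mathbf{i})$, which is the claim.

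The main obstacle is justifying the \emph{holomorphy} of the projection in the second step. In representation-theoretic terms, one must show that among the $K_\infty^{(r+1,1)}$-isotypic components of the pulled-back $V_{\kappa,d}$-valued nearly holomorphic form, the component isomorphic to $L^{(\underline{k},0;\kappa)}$ on the $U(r+1,1)$ factor has vanishing anti-holomorphic (unit root / $C^\infty$-Split) derivatives. Equivalently, the image of $D_\kappa^d f_\kappa$ under pullback and projection must land in the lowest piece of the filtration of nearly holomorphic forms on $X_{r+1,1}$. The cleanest way to discharge this is to match the representation-theoretic setup here with that of \cite[p. 480]{SU06}, where the argument is carried out in detail; the Frobenius reciprocity isomorphism stated immediately before Definition \ref{csubscriptdef1} together with the construction of $\varphi_{(\underline{k},0;\kappa)}$ is what makes this translation possible.
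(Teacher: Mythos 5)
There is a genuine gap, and in the context of this paper your route is actually circular. The crux of your argument is the claim that projecting the nearly holomorphic pullback of $D_\kappa^d E_\kappa(f_{\sieg},z_\kappa,-)$ onto the summand $L^{(\underline{k},0;\kappa)}\boxtimes (L^{(\underline{k})}\otimes\det^\kappa)$ yields a \emph{holomorphic} form on $X_{r+1,1}$. You flag this yourself as the main obstacle, but the way you discharge it does not work: \cite[Theorem 2.4.5]{HarrisESeries} is the statement that the intertwining-operator contribution $A(\rho,f,z_\kappa)_{-z_\kappa}$ to the constant term vanishes (that is Lemma \ref{Harris} of this paper); it says nothing about a projection of a nearly holomorphic form being annihilated by $\mathfrak{p}^-$. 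Moreover, in the paper the holomorphy of $E^{C^\infty}_{(\underline{k},0;\kappa)}$ is \emph{deduced from} Proposition \ref{questionprop}: one writes $E^{C^\infty}_{(\underline{k},0;\kappa)}=\sum_i E_i\boxtimes\varphi_i$, identifies each $E_i$ as a Klingen Eisenstein series via the pullback formula, and invokes the proposition to conclude each $E_i$ is holomorphic. So the fact you want to feed into your argument is, in the paper's logical order, a consequence of the statement you are trying to prove, and no independent proof of it is supplied. (Invoking Lemma \ref{Harris} at the end is also a non sequitur for holomorphy: that lemma controls the constant term along $P$, not the Cauchy--Riemann equations on $X_{r+1,1}$.)

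The paper's intended proof, following \cite[p.~480]{SU06}, works directly with the Archimedean induced representation rather than routing through the Siegel Eisenstein series. The point is that by Frobenius reciprocity and the minimal-$K$-type identity $(V_\pi^{\smfin}\otimes L^{\underline{k}})^{K^{(r,0)}_\infty}=(V_\pi^{\smfin}\otimes L^{(\underline{k},0;\kappa)})^{K^{(r,0)}_\infty}$, the $K^{(r+1,1)}_\infty$-type $L^{(\underline{k},0;\kappa)}$ occurs with multiplicity one in $I(\rho_\infty,z_\kappa)$; the holomorphic (lowest-weight) subrepresentation with that minimal $K_\infty$-type embeds in $I(\rho_\infty,z_\kappa)$, so the distinguished vector $\varphi_{(\underline{k},0;\kappa)}$ necessarily lies in it and is killed by $\mathfrak{p}^-$. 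Since $E(\cdot,s,\cdot)$ intertwines the $(\mathfrak{gu},K_\infty)$-action, $E(f,z_\kappa,-)$ is then killed by $\mathfrak{p}^-$, which is exactly the holomorphy of $E(f;z_\kappa,Z,h)$. If you want to keep your strategy, you would have to prove the holomorphy of the projected pullback by an analogous multiplicity-one and lowest-weight analysis inside the degenerate principal series $I_{r+1}(\tau)\otimes V_{\kappa,d}$ restricted to $\mathfrak{u}(r+1,1)\times\mathfrak{u}(0,r)$; that is not easier than the direct argument and is not what the cited references establish.
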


\subsubsection{Pullback integrals: the $p$-adic $L$-functions case}\label{pb2}
  Let $z_\kappa'=\frac{\kappa}{2}-\frac{r}{2}$. As above, there is another interpretation of the $C^\infty$-differential operators $D_\kappa^d$ as a vector of elements in the enveloping algebra of $\mathfrak{u}(r,r)(\mathbb{R})$, which we denote by $D_\kappa^d$ as well. Thus we can apply $D_\kappa^d$ to $f_\kappa\in I_{r}(\tau)$ to obtain an element $D_\kappa^d f_\kappa\in (I_{r}(\tau)\otimes V_{\kappa,d})^K$. Consider the representations $V_d:=(St_{\GL_r}\boxtimes St_{\GL_r})^d$ of $GL_{r}\times GL_{r}$ and $V_{\kappa,d}:=V_d\otimes (1\boxtimes \det)^\kappa$.
The representation $L^{(\underline{k})}\boxtimes (L^{(\underline{k})}\otimes \det^\kappa)$ shows up as a summand of $V_{\kappa,d}$. Let $f_\infty^{L^{(\underline{k})}\boxtimes (L^{(\underline{k})}\otimes \det^\kappa)}$ be the $L^{(\underline{k})}\boxtimes (L^{(\underline{k})}\otimes \det^\kappa)$-valued function obtained by $D_\kappa^df_\kappa$ pulled back to $U(r,0)\times U(0,r)$ by $\gamma'$, and take the summand corresponding to $L^{(\underline{k})}\boxtimes (L^{(\underline{k})}\otimes \det^\kappa)$.
\begin{definition}\label{csubscriptdef2}
Let $\varphi_{\infty}\in(\pi_\infty\otimes V_{\underline{k}})^{U(\mathbb{R})}$. We define the $V_{(\underline{k})}$-valued section:
$$F_{\varphi_\infty}'(g)=\int_{U(0,r)(\mathbb{R})}\langle f_{\infty}^{L^{(\underline{k})}\boxtimes (L^{(\underline{k})}\otimes \det^\kappa)}(\gamma'(g,g_1))\bar{\tau}(g_1),\pi(g_1)\varphi_{\infty}\rangle dg_1.$$
This is in $(I(\rho)\otimes V_{(\underline{k})})^{K_\infty}$ and is a constant times $\varphi_{\infty}$.  We denote this constant by $c_{(\underline{k},0;\kappa)}'$.
\end{definition}
The constant $c_{(\underline{k},0;\kappa)}'$ is an algebraic number by \cite{Harris}.

\section{Global Computations}\label{Global-Computations-section}
\subsection{$p$-adic Interpolation}
We define an ``Eisenstein datum'' $\mathcal{D}$ to be a pair $(\varphi,\xi_0)$ consisting of a cuspidal eigenform $\varphi$ of weight $\underline{k}=(a_1,\cdots,a_r), a_r\geq 0$ on $U(r, 0)$ like in Section \ref{pbsection} and  a Hecke character $\xi_0$ of $\mathcal{K}^\times\backslash \mathbb{A}_\mathcal{K}^\times$ such that $\xi_0|\cdot|^{\frac{r-1}{2}}$ is a finite order character.  We denote by $\alpha$ the $\mathcal{O}_L$-isomorphism $\Lambda_{\mathcal{K},\mathcal{O}_L}^+\rightarrow\Lambda_{\mathcal{K},\mathcal{O}_L}^-$ sending $\gamma^+$ to $\gamma^-$. Let $\sigma$ be the reciprocity map of class field theory $\mathcal{K}^\times\backslash \mathbb{A}_\mathcal{K}^\times\rightarrow \mathrm{Gal}_\mathcal{K}^{ab}$ normalized by the geometric Frobenius. We define
\begin{align*}
\tau_0 &:=\overline{(\xi_0|\cdot|^{\frac{r-1}{2}})}^c,\\
\boldsymbol{\xi} &:=\xi_0\cdot(\Psi\circ\sigma),\\
 \boldsymbol{\tau} &:=\tau_0\cdot(\Psi^+\circ\sigma)\cdot(\alpha\circ\Psi^+\circ\sigma),\\
 \psi_\mathcal{K}&:=(\Psi^-)^{\frac{1}{2}}(\alpha\circ\Psi^+)^{-\frac{1}{2}}.
\end{align*}
We define\footnote{The superscript ``pb'' stands for ``pullback.''} $\mathcal{X}^{pb}$ to be the set of $\bar{\mathbb{Q}}_p$-points $\phi\in\mathrm{Spec}\Lambda_{\mathcal{K},\mathcal{O}_L}$ such that $\phi\circ\boldsymbol{\tau}((1+p,1))=\tau_0((1+p,1))$,
\begin{align}\label{kappaphidefn}
\phi\circ\boldsymbol{\tau}((1,1+p))=(1+p)^{\kappa_\phi}\tau_0((1,1+p))
\end{align}
 for some integer $\kappa_\phi>r+1$ such that the weight $(c_r,\cdots,c_1,0;\kappa_\phi)$ is in the absolutely convergent range for $P$ in the sense of Harris (lemma \ref{Harris}), and such that
 \begin{align}\label{mphidefn}
 \phi\circ\psi_{\CK}(\gamma^-)=(1+p)^{\frac{m_\phi}{2}}
 \end{align}
  for some non-negative integer $m_\phi$, and such that the $\tau_\phi$ (to be defined in a moment) is such that, under the identification $\tau_\phi=(\tau_1,\tau_2)$ for $\mathcal{K}_p^\times\simeq \mathbb{Q}_p^\times\times\mathbb{Q}_p^\times$, we have $\tau_1$, $\tau_2$, $\tau_1\tau_2$ all have conductor $(p)$.

We denote by $\mathcal{X}$ the set of $\bar{\mathbb{Q}}_p$-points $\phi$ in $\mathrm{Spec}\Lambda_{\mathcal{K},\mathcal{O}_L}$ such that $\phi\circ\boldsymbol\tau((1,1+p))=(1+p)^{\kappa_\phi}\zeta_1\tau_0((1,1+p))$, $\phi\circ\boldsymbol{\tau}((p+1,1))=\tau_0((p+1,1))$, and $\phi\circ\psi_{\CK}(\gamma^-)=\zeta_2$ with $\zeta_1$ and $\zeta_2$ $p$-power roots of unity.
\begin{remark}
We will use the points in $\mathcal{X}^{pb}$ for $p$-adic interpolation of special $L$-values and Klingen Eisenstein series, and we will use the points in $\mathcal{X}$ to construct a Siegel Eisenstein measure.
\end{remark}

For each $\phi\in\Spec \Lambda_{\mathcal{K}, \mathcal{O}_L}$, we define Hecke characters $\psi_\phi$ and $\tau_\phi$ of $\mathcal{K}^\times \backslash\mathbb{A}_\mathcal{K}^\times$ by
\begin{align*}
\bar{\tau}_\phi^c(x)&:=\bar{x}_\infty^{\kappa_\phi}(\phi\circ\boldsymbol{\tau})(x)x_{\bar{v}}^{-\kappa_\phi}\cdot|\cdot|^{-\frac{\kappa_\phi}{2}},\\
\psi_\phi(x)&:=x_\infty^{\frac{m_\phi}{2}}\bar{x}_\infty^{-\frac{m_\phi}{2}}(\phi\circ\boldsymbol{\psi})x_v^{-\frac{m_\phi}{2}}x_{\bar{v}}^
{\frac{m_\phi}{2}},
\end{align*}
where $\kappa_\phi$ and $m_\phi$ are as in Equations \eqref{kappaphidefn} and \eqref{mphidefn}, respectively.
Let
\begin{align*}
\xi_\phi&=|\cdot|^{\frac{\kappa-r+1}{2}}\bar{\tau}_\phi^c\psi_\phi\psi_\phi^{-c},\\
\varphi_\phi&=\varphi\otimes\psi_\phi^{-1}.
\end{align*}
\begin{definition}
The element $\phi$ considered as a function on $\Lambda_\mathcal{K}$ restricts to a character of $\Gamma_\mathcal{K}$ which we denote by $\hat{\phi}$.
\end{definition}
We are going to construct $p$-adic families of modular forms that interpolate the Klingen Eisenstein series, and we are going to construct $p$-adic $L$-functions from the datum $(\varphi_\phi,\tau_\phi).$ (The families are parametrized by weights $\underline{k}_\phi$, where $\underline{k}_\phi=(a_1+m_\phi,\ldots,a_r+m_\phi)$.) For an arithmetic point $\phi\in\mathcal{X}$, recall that we have defined
$$f_{\sieg,\phi}=\prod_{i=0}^{r}L^\Sigma(2s+r+1-i,\bar\tau\chi_\mathcal{K}^i)\prod_{j=1}^r(\kappa-j-1)!(2\pi i)^{-(r+1)\kappa}\left(\frac{2}{\pi}\right)^{-\frac{(r+1)r}{2}}\otimes_{v\not=\infty}f_{v,\sieg}\otimes_\infty f_\kappa$$
$$f_{\sieg,\phi}'=\prod_{i=0}^{r-1}L(2s+r-i,\bar\tau\chi_\mathcal{K}^i)\prod_{j=0}^{r-1}(\kappa-j-1)!(2\pi i)^{-r\kappa}\left(\frac{2}{\pi}\right)^{\frac{-r(r+1)}{2}}\otimes_{v\not=\infty}f_{v,\sieg}'\otimes_\infty f_\kappa',$$
where $f_{p,\phi}$ (respectively, $f_{p,\phi}'$) is the Siegel section constructed in Section \ref{local-computations-section}, using the datum $\varphi_\phi,\tau_\phi$.
We define
$$f_{\Kling,\phi}=\prod_{i=0}^{r}L(2s+r+1-i,\bar\tau\chi_\mathcal{K}^i)\otimes_vf_{v,\Kling}$$
for $f_{v,\Kling}$ ($v\not=p,\infty$) as defined in Section \ref{local-computations-section} using the datum $(\varphi_\phi,\tau_\phi)$. (notation as in definition \ref{csubscriptdef1}).

\subsection{Action of the Differential Operators on $q$-expansion coefficients}
\subsubsection{The Klingen Eisenstein Series Case}
Recall that under the embedding $GL_{r+1}\times GL_1\times GL_r\hookrightarrow GL_{r+1}\times GL_{r+1}$ given in Equation \eqref{emeddingq}, $L^{(a_1,\ldots,a_r,0)}\times L^\kappa\times L^{(\kappa+a_1,\ldots,\kappa+a_r)}$ (viewed as a representation of $GL_{r+1}\times GL_1\times GL_{r}$) is a summand of $(St_{\GL_{r+1}}\boxtimes St_{\GL_{r+1}})^d\otimes (1\boxtimes \det_{r+1})^\kappa$ (for some $d$) restricted to $GL_{r+1}\times GL_1\times GL_r\hookrightarrow GL_{r+1}\times GL_{r+1}$. We choose such a summand and let $v_{(\underline{k},0,\kappa)}$ be the highest weight vector of it (viewed as an element of the representation of $GL_{r+1}\times GL_{r+1}$).  More precisely, if $F_\kappa$ is a form of scalar weight $\kappa$ on $U(r+1, r+1)$, we consider $D_\kappa^d F_\kappa$ as a $p$-adic automorphic form on $U(r+1,r+1)$.  We define an automorphic form $\hat{F}_{(\underline{k},0,\kappa)}$ on $U(r+1, r+1)$ by
$$\hat{F}_{(\underline{k},0,\kappa)}:=\langle D_\kappa^dF_\kappa,v_{(\underline{k},0,\kappa)}\rangle,$$
where $\langle,\rangle$ is the natural pairing between $V_{\kappa,d}^\vee$ and $V_{\kappa,d}$.  We will compute the $q$-expansion coefficients of $\hat{F}_{(\underline{k},0;\kappa)}$.
\begin{proposition}\label{qexpprop}
Let $F_\kappa$ be an automorphic form of scalar weight $\kappa$ on $U(r+1,r+1)$ with Fourier expansion $F_{\kappa,[g]}=\sum_{\beta\in S_{r+1}^+(\IQ)}F_{\beta,[g]} q^\beta$ at the cusp $[g]$. Then the $q$-expansion coefficient of $\hat{F}_{(\underline{k},0;\kappa)}$ at $\beta$ is given by
\begin{align}\label{fcoeffq}
F_{(\underline{k},0;\kappa),[g],\beta}=F_{\beta,[g]}\beta_{21}^{a_1-a_2}\det\begin{pmatrix}\beta_{21}&\beta_{22}\\ \beta_{31}&\beta_{32}\end{pmatrix}^{a_2-a_3}\cdots\det\begin{pmatrix}\beta_{21}&\cdots&\beta_{2r}\\\vdots&\ddots&\vdots\\ \beta_{r+1,2}&\cdots&\beta_{r+1,r}\end{pmatrix}^{a_r}.
\end{align}
\end{proposition}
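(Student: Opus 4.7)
The plan is to trace the action of $D_\kappa^d$ on Fourier expansion coefficients and then evaluate the resulting pairing against the highest weight vector $v_{(\uk,0,\kappa)}$. First, I would invoke the standard description of the $p$-adic differential operator on $q$-expansions: via the Kodaira-Spencer isomorphism, the cotangent bundle of the ordinary locus is identified with $\uo^+ \otimes \uo^-$, and on the Mumford family computing Fourier coefficients at a cusp $[g]$ this identification sends $d\log q^\beta$ to $\beta$ itself viewed inside $H^+ \otimes H^-$ using the unit root splitting. Thus, as established in the scalar-weight setting of \cite[Sections 7--9]{Eischen}, each application of $D^{\rho_\kappa}$ multiplies the coefficient $F_{\beta,[g]} \cdot q^\beta$ by the factor $\beta \in H^+ \otimes H^-$; after $d$ iterations one obtains
\begin{equation*}
D_\kappa^d F_\kappa \;=\; \sum_\beta F_{\beta,[g]} \cdot \beta^{\otimes d} \cdot q^\beta
\end{equation*}
as a section valued in $(\mathrm{St}_{\GL_{r+1}} \boxtimes \mathrm{St}_{\GL_{r+1}})^{\otimes d}$, with the $(1 \boxtimes \det)^\kappa$ twist of $V_{\kappa,d}$ absorbed into the scalar weight of $F_\kappa$.

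Next, I would unravel the pairing $\langle \beta^{\otimes d}, v_{(\uk,0,\kappa)} \rangle$. Recall that $v_{(\uk,0,\kappa)}$ lies in the summand $L^{(\uk,0)} \boxtimes L^{\kappa} \boxtimes L^{(\kappa + \uk)}$ of $V_{\kappa,d}$ restricted along the embedding $\GL_{r+1} \times \GL_1 \times \GL_r \hookrightarrow \GL_{r+1} \times \GL_{r+1}$, where the $\GL_1$ factor acts on one distinguished coordinate of the second copy of $\GL_{r+1}$ and the $\GL_r$ factor acts on the remaining $r$ coordinates. Tracking this through the natural identification of $\beta$ as an $(r+1)\times(r+1)$ Hermitian matrix, the entries picked out by the highest weight vector are exactly those of the off-diagonal block with row indices $2,\ldots,r+1$ and column indices $1,\ldots,r$. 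The $L^\kappa$ piece and the $\det^\kappa$ twist in $L^{(\kappa + \uk)}$ contribute only the overall scalar weight $\kappa$ already built into $F_\kappa$, and so do not appear in the polynomial factor.

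Finally, I would identify the resulting polynomial in the entries of this $r\times r$ off-diagonal block using the classical theory of rational representations of $\GL_r$: if $L^{(a_1,\ldots,a_r)}$ is realized as a subrepresentation of an appropriate tensor power of the standard representation, then its highest weight vector, paired against an $r\times r$ matrix $M$ of variables, evaluates to
\begin{equation*}
\Delta_1(M)^{a_1-a_2}\,\Delta_2(M)^{a_2-a_3}\cdots\Delta_{r-1}(M)^{a_{r-1}-a_r}\,\Delta_r(M)^{a_r},
\end{equation*}
where $\Delta_i(M)$ denotes the $i\times i$ upper-left principal minor. Evaluating on the off-diagonal block of $\beta$ yields precisely the product of minor determinants in (\ref{fcoeffq}). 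The main obstacle is the coordinate bookkeeping in the second step: one must verify that the particular choice of summand for $v_{(\uk,0,\kappa)}$ isolates this off-diagonal block rather than some other block of $\beta$, and match the tensor factor conventions of the embedding of Levi subgroups with those used to index the Hermitian matrix. Once these conventions are pinned down, Steps 1 and 3 are essentially standard, and the formula drops out.
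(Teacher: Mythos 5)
Your proposal is correct and follows essentially the same route as the paper: the paper likewise invokes the $q$-expansion formula for the differential operator (citing \cite[Theorem 9.2(4)]{Eischen}, which says $DF=(\sum_{i,j}\beta_{ij}c(F,\beta)v_j^\vee\boxtimes v_i^\vee)q^\beta$) and then pairs $\beta^{\otimes d}$ against an explicit highest weight vector realized as a product of powers of leading minors on $M_{r\times r}$. The ``coordinate bookkeeping'' you flag as the main obstacle is exactly the point the paper addresses by noting the $\omega^+$/$\omega^-$ convention mismatch between \cite{Hsieh} and \cite{Eischen}, which forces a transpose of $\beta$ and accounts for the shifted indices ($\beta_{21}$ rather than $\beta_{11}$) in the final formula.
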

\begin{proof}
Note that if we identify the representation $\mathrm{St}_{\GL_r}\boxtimes\mathrm{St}_{\GL_r}$ of $GL_{r}\times GL_r$ with the natural representation of it on $M_{r\times r}$ (i.e. $\alpha\mapsto { }^tg\alpha h$ for each $(g, h)\in \gl_r\times \gl_r$), then we have an action of $GL_r\times GL_r$ on the space of polynomial functions of entries of $M_{r\times r}$. There is a summand of this space with representation $L^{(a_1,\ldots,a_r)}\boxtimes L^{(a_1,\ldots,a_r)}$ whose highest weight vector is $x_{11}^{a_1-a_2}\det \begin{pmatrix}x_{11}&x_{12}\\x_{21}&x_{22}\end{pmatrix}^{a_2-a_3}\cdots(\det X)^{a_r}$. Let $(v_1,\ldots,v_{r+1})$ be a standard basis of the standard representation of $GL_{r+1}$. Note that we have used the notation of \cite{Hsieh}, and $\omega^+$ in \cite{Hsieh} is actually denoted $\omega^-$ in \cite{Eischen}.  (See the first paragraph in Section \ref{2.5}.) Thus, when applying the formula in \emph{loc.cit}, we need to give the transpose of each matrix $\beta$ appearing in the $q$-expansion coefficients.\\

  This is a straightforward application of \cite[Theorem 9.2(4)]{Eischen}. More precisely, we let
\begin{align}\label{dotequq}
v=(v_2\boxtimes v_1)^{a_1-a_2}\cdot ((v_2\boxtimes v_1)\cdot(v_3\boxtimes v_2)-(v_2\boxtimes v_2)\cdot(v_1\boxtimes v_3))^{a_2-a_3}\cdots.
\end{align}
 By \emph{loc.cit}, we have $DF=(\sum \beta_{ij}c(F,\beta)v_j^\vee\boxtimes v_i^\vee)q^\beta$. By definition, the resulting form is given by $\langle D^d_\kappa F,v\rangle$. Thus the $q$-expansion is as in the statement of Proposition \ref{qexpprop}.
\end{proof}
\subsubsection{The $p$-adic $L$-functions Case}
Recall that as a representation of $GL_{r}\times GL_{r}$, $L^{(a_1,\ldots,a_r)}\times L^{(\kappa+a_1,\ldots,\kappa+a_r)}$ is a summand of $(St_{\GL_r}\boxtimes St_{\GL_r})^d\otimes (1\boxtimes \det_r)^\kappa$ (for some $d$). We take such a summand and let $v_{(\underline{k},0;\kappa)}'$ be the highest weight vector.  Let $F'_\kappa$ be a form on $U(r,r)$ of scalar weight $\kappa$.  We define
\begin{align}\label{Epairingequ}
\hat{F}_{(\underline{k},0;\kappa)}'=\langle D_\kappa^dF_\kappa',v_{(\underline{k},0;\kappa)}'\rangle.
\end{align}
 We will compute the $q$-expansion coefficients of $\hat{F}_{(\underline{k},0;\kappa)}'$.
\begin{proposition}
Let $F_\kappa'$ be an automorphic form of scalar weight $\kappa$ on $U(r,r)$ with Fourier expansion $F'_\kappa=\sum_{\beta\in S_{r}^+(\beta)}F_{\beta,[g]}' q^\beta$ at the cusp $[g]$. Then the $q$-expansion coefficient at $\beta$ of the form $\hat{F}_{(\underline{k},0,\kappa)}'$ is given by
\begin{align}\label{expq2}
\hat{F}_{(\underline{k},0;\kappa),[g],\beta}'=F_{\beta,[g]}'\beta_{11}^{a_1-a_2}\det\begin{pmatrix}\beta_{11}&\beta_{12}\\ \beta_{21}&\beta_{22}\end{pmatrix}^{a_2-a_3}\cdots\det\begin{pmatrix}\beta_{11}&\cdots&\beta_{1 r}\\\vdots&\ddots&\vdots\\ \beta_{r1}&\cdots&\beta_{rr}\end{pmatrix}^{a_r}.
\end{align}
\end{proposition}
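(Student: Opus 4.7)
The proof will proceed in direct parallel to that of Proposition \ref{qexpprop}, with the relevant difference being that now only the full group $GL_r\times GL_r$ acts (rather than the three-factor group $GL_{r+1}\times GL_1\times GL_r$ embedded via \eqref{emeddingq}), which actually makes the bookkeeping simpler. The plan is to identify an explicit highest weight vector $v'_{(\uk,0;\kappa)}$ in the relevant summand, and then apply \cite[Theorem 9.2(4)]{Eischen} to read off the $q$-expansion of $\langle D_\kappa^d F'_\kappa, v'_{(\uk,0;\kappa)}\rangle$.

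First, I would identify the summand and its highest weight vector. As in the proof of Proposition \ref{qexpprop}, identify $\mathrm{St}_{GL_r}\boxtimes \mathrm{St}_{GL_r}$ with the natural representation of $GL_r\times GL_r$ on $M_{r\times r}$ via $(g,h)\cdot\alpha={}^t\!g\,\alpha\, h$, which identifies $(\mathrm{St}_{GL_r}\boxtimes\mathrm{St}_{GL_r})^d$ with a space of polynomial functions of degree $d$ in the $r^2$ entries $x_{ij}$. The highest weight vector of the summand isomorphic to $L^{(a_1,\ldots,a_r)}\boxtimes L^{(a_1,\ldots,a_r)}$ (of weight $(a_1,\ldots,a_r)$ with respect to the diagonal torus of each factor) is given by the product of leading principal minors
\begin{align*}
P(X)=x_{11}^{a_1-a_2}\det\begin{pmatrix}x_{11}&x_{12}\\ x_{21}&x_{22}\end{pmatrix}^{a_2-a_3}\cdots (\det X)^{a_r}.
\end{align*}
Twisting by $(1\boxtimes \det_r)^\kappa$ gives the highest weight vector $v'_{(\uk,0;\kappa)}$ of $L^{(a_1,\ldots,a_r)}\boxtimes(L^{(a_1,\ldots,a_r)}\otimes\det^\kappa)$ inside $V_{\kappa,d}$; the $\det^\kappa$ factor does not affect the polynomial form of the leading coefficient since it is absorbed into the weight-$\kappa$ Fourier coefficient of $F'_\kappa$.

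Next, I would invoke \cite[Theorem 9.2(4)]{Eischen}: if $F'_\kappa$ has Fourier expansion $\sum_{\beta\in S_r^+(\mathbb{Q})}F'_{\beta,[g]}\,q^\beta$, then applying the differential operator $D_\kappa^d$ produces a form whose $\beta$-th $q$-expansion coefficient is obtained by multiplying $F'_{\beta,[g]}$ by a tensor built out of $\beta$ viewed as a bilinear form on the standard basis via the pairing $v_i\boxtimes v_j\mapsto \beta_{ij}$. Concretely, the value of the polynomial $P(X)$ evaluated in this tensor sense at $\beta$ is exactly
\begin{align*}
\beta_{11}^{a_1-a_2}\det\begin{pmatrix}\beta_{11}&\beta_{12}\\ \beta_{21}&\beta_{22}\end{pmatrix}^{a_2-a_3}\cdots (\det\beta)^{a_r}.
\end{align*}
(Unlike in Proposition \ref{qexpprop}, where the embedding \eqref{emeddingq} meant the relevant block started in the second row/column — giving the shifted minors appearing in \eqref{fcoeffq} — here the relevant $r\times r$ matrix is $\beta$ itself and the minors are the leading principal minors.) Pairing with $v'_{(\uk,0;\kappa)}$ as in \eqref{Epairingequ} then immediately yields formula \eqref{expq2}.

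The main obstacle, to the extent there is one, is bookkeeping: one has to verify that the choice of summand and of highest weight vector in $(\mathrm{St}_{GL_r}\boxtimes\mathrm{St}_{GL_r})^d$ is consistent with the conventions for $L_{\uk}$ and $L^{\uk}$ from \cite{Hsieh} that we adopted in Section \ref{geomforms-section} and with the action of $GL_r$ on $\omega^\pm$ described in the proof of Proposition \ref{qexpprop}. Once the dictionary between the two conventions is fixed (exactly as noted in that proof), the identification of $P(X)$ as the highest weight vector, and hence formula \eqref{expq2}, follows mechanically from \cite[Theorem 9.2(4)]{Eischen}.
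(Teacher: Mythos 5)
Your proposal is correct and matches the paper's intent exactly: the paper simply states that the proof is similar to that of Proposition \ref{qexpprop}, and your argument carries out precisely that parallel — identifying the highest weight vector as the product of leading principal minors on $M_{r\times r}$, applying \cite[Theorem 9.2(4)]{Eischen}, and noting the convention bookkeeping, with the only change from the Klingen case being that the minors are unshifted because the embedding \eqref{emeddingq} is absent.
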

The proof is similar to the proof of Proposition \ref{qexpprop}.

\subsection{Construction of the Eisenstein Measure}
We start with Siegel Eisenstein series
\begin{align*}
E_\kappa&:=E_\kappa(f_{\sieg,\phi},z_\kappa,-)\\
E_\kappa'&:=E_\kappa'(f_{\sieg,\phi}',z_\kappa',-).
\end{align*}
We apply the differential operator constructed above to this Eisenstein series, pair with the vector $v_{(\underline{k},0;\kappa)}$ or $v_{(\underline{k},0;\kappa)}'$ and denote the resulting form by $\hat{E}_{\mathcal{D},\phi}(f_{\sieg,\phi},z_{\kappa_\phi},-)$ (resp. $\hat{E}_{\mathcal{D},\phi}'(f_{\sieg,\phi}',z_{\kappa_\phi},-)$).

\begin{proposition}\label{prop5.4}
There are $p$-adic measures $\mathcal{E}_{\mathcal{D},\sieg}$ and $\mathcal{E}_{\mathcal{D},\sieg}'$ on $\Gamma_\mathcal{K}\simeq \mathbb{Z}_p\times\mathbb{Z}_p$ with values in the space of $p$-adic automorphic forms on $U(r+1,r+1)$ and $U(r,r)$, respectively, such that
$$\int_{\Gamma_\CK}\hat{\phi}d\mathcal{E}_{\mathcal{D},\sieg}=\hat{E}_{\mathcal{D},\phi}
(f_{\sieg,\phi},z_{\kappa_\phi},-)$$
and
$$\int_{\Gamma_\CK}\hat{\phi}d\mathcal{E}_{\mathcal{D},\sieg}'=\hat{E}_{\mathcal{D},\phi}'
(f_{\sieg,\phi}',z_{\kappa_\phi}',-),$$
respectively.
\end{proposition}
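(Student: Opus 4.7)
The plan is to apply the $q$-expansion principle for $p$-adic modular forms on $U(r+1,r+1)$ (respectively $U(r,r)$) recalled in Sections \ref{2.5}--\ref{FJexpns-section}: a measure on $\Gamma_{\mathcal{K}}$ valued in $p$-adic modular forms corresponds to a compatible system of formal $q$-expansions $\sum_\beta \mathcal{A}_{[g],\beta}\, q^\beta \in \Lambda_{\mathcal{K},\mathcal{O}_L} \otimes_{\mathbb{Z}_p}\mathcal{R}_{[g],\infty}$ at each cusp $[g]$, whose specialization at any $\hat\phi$ with $\phi\in\mathcal{X}$ reproduces the $q$-expansion of $\hat{E}_{\mathcal{D},\phi}(f_{\sieg,\phi}, z_{\kappa_\phi}, -)$. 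Since $\mathcal{X}$ is Zariski dense in $\Spec\Lambda_{\mathcal{K},\mathcal{O}_L}$, such a compatible system is unique, so the task reduces to exhibiting each $\mathcal{A}_{[g],\beta}$ as a $p$-integral element of $\Lambda_{\mathcal{K},\mathcal{O}_L} \otimes_{\mathbb{Z}_p} H^0(\mathcal{Z}_{[g]}^\circ,\mathcal{L}(\beta))$ whose image under $\hat\phi$ is the $\beta$-th $q$-expansion coefficient of $\hat{E}_{\mathcal{D},\phi}$ at $[g]$.

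First I would fix $\phi\in\mathcal{X}$ and a positive-definite Hermitian $\beta$, and factor the $\beta$-th Fourier coefficient of $E_\kappa(f_{\sieg,\phi}, z_{\kappa_\phi}, -)$ as a product of local integrals via the factorization lemma at the end of Section \ref{EseriesPullbkFormulas-section}. Each local factor has been computed in Section \ref{local-computations-section}: at unramified places one obtains $\tau_\phi(\det y)|\det y\bar y|_v^{-s+n/2}$ times the polynomial $h_{v,\beta}$ divided by local $L$-factors; at $\ell\in\Sigma\setminus\{p\}$ Lemma \ref{lemma4.6} applies; at $p$ Lemma \ref{lemma4.12} gives the factor $\bar{\tau}'_\phi(\det\beta)|\det\beta|_p^{2z_{\kappa_\phi}}\mathfrak{g}(\tau'_\phi)^{r+1}c_{r+1}(\bar{\tau}'_\phi,-z_{\kappa_\phi})\Phi_{\xi_1^\dag}({ }^t\tilde\beta)$; and at $\infty$ Lemma \ref{Fourier} contributes the transcendental factor $(2\pi i)^{(r+1)\kappa_\phi}\det(\beta)^{\kappa_\phi-r-1}/\prod_{j=0}^{r}(\kappa_\phi-j-1)!$. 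The normalization constants $\prod_i L^\Sigma(2s+r+1-i,\bar\tau\chi_\mathcal{K}^i)$, $\prod_j(\kappa_\phi-j-1)!$, and $(2\pi i)^{-(r+1)\kappa_\phi}(2/\pi)^{-(r+1)r/2}$ built into $f_{\sieg,\phi}$ are chosen precisely to cancel the unramified $L$-factors, the factorials, and the complex periods, leaving an algebraic number.

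Next I would apply the $p$-adic differential operator and pair with the highest-weight vector $v_{(\underline{k},0;\kappa)}$. Since at an arithmetic point the $p$-adic and $C^\infty$ differential operators act identically on $q$-expansions of the Siegel Eisenstein series (both are Gauss--Manin-plus-Kodaira--Spencer and agree after specializing at an ordinary CM abelian variety), Proposition \ref{qexpprop} applies and the resulting $\beta$-th coefficient of $\hat{E}_{\mathcal{D},\phi}$ at $[g]$ is obtained from the Siegel Fourier coefficient above by multiplication by the polynomial in the entries of $\beta$ appearing in Equation \eqref{fcoeffq}. Assembling the pieces, the coefficient takes the form $\hat\phi(\mathcal{A}_{[g],\beta})$, where the $\phi$-dependence factors through the universal characters $\boldsymbol{\tau}$ and $\boldsymbol{\psi}$ defined at the beginning of Section \ref{Global-Computations-section}, via $\bar{\tau}_\phi^c(\det\beta)$, $\Phi_{\xi_1^\dag}$, the Gauss sums $\mathfrak{g}(\tau_\phi')^{r+1}$, the constants $c_{r+1}(\bar{\tau}_\phi',-z_{\kappa_\phi})$, and the weight shift $\psi_\phi$ implicit in the polynomial factor. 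The construction of $\mathcal{E}'_{\mathcal{D},\sieg}$ on $U(r,r)$ is entirely parallel, using Proposition \ref{prop411}, the $U(r,r)$-analogue of Lemma \ref{lemma4.12}, and the $q$-expansion formula \eqref{expq2} in place of \eqref{fcoeffq}.

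The main obstacle is verifying $p$-integrality and uniform-in-$\beta$ boundedness of $\mathcal{A}_{[g],\beta}$, so that the formal $q$-expansions genuinely lie in $\Lambda_{\mathcal{K},\mathcal{O}_L}\otimes_{\mathbb{Z}_p}\mathcal{R}_{[g],\infty}$ rather than in its total fraction field. Uniform boundedness holds because, once the transcendental factor has been absorbed into the normalization, the remaining $\beta$-dependence enters only through $\bar{\tau}'_\phi(\det\beta)$, $\Phi_{\xi_1^\dag}({ }^t\tilde\beta)$, the polynomials $h_{v,\beta}$ at bad primes, and the polynomial in $\beta_{ij}$ from \eqref{fcoeffq}, all of which take $\mathcal{O}_L$-values. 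The $p$-integrality of the factor $c_{r+1}(\bar{\tau}'_\phi,-z_{\kappa_\phi})$ and of $\mathfrak{g}(\tau_\phi')^{r+1}$ requires the assumption that the conductors of $\tau_1$, $\tau_2$, and $\tau_1\tau_2$ are all exactly $(p)$ built into $\mathcal{X}$, which is what makes these quantities $p$-adic units after our chosen normalization.
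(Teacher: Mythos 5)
Your proposal is correct and follows essentially the same route as the paper, whose proof is only a two-sentence reference to the Katz-style Eisenstein-measure construction: it asserts that the proposition ``follows from the formulas for the $q$-expansion coefficients, together with the $p$-adic $q$-expansion principle.'' Your write-up simply supplies the details implicit in that citation --- the local Fourier-coefficient computations, the cancellation of $L$-factors, factorials, and periods by the chosen normalization, the effect of the differential operator on $q$-expansions via Proposition \ref{qexpprop}, and the $p$-integrality/boundedness check --- all of which match the paper's intended argument.
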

\begin{proof}
The idea of the proof is the same as the main idea in the constructions of the Eisenstein measure in \cite[Section 4.2]{Katz78} and \cite[Section 4]{apptoHLS}.
More precisely, this proposition follows from the formulas for the $q$-expansion coefficients, together with the $p$-adic $q$-expansion principle for automorphic forms on unitary groups of signature $(r+1, r+1)$ (respectively, $(r, r)$).  (The $p$-adic $q$-expansion principle is given in \cite[Corollary 10.4]{hi05} and\cite[Section 8.4]{Hida04}.)
\end{proof}

\subsection{Construction of the Family}
In this section, we will prove parts (i) and (ii) of Theorem \ref{main}. Without loss of generality, we may take $K=\prod_vK_v\subset U(r,0)(\mathbb{A}_f)$ open compact subgroups such that $\mathcal{E}_{\mathcal{D},\sieg}$ and $\mathcal{E}_{\mathcal{D},\sieg}'$ are invariant under the action of $\gamma(1\times K)$.

  \subsubsection{Klingen Eisenstein Series}\label{section541}

In this section, we construct a bounded measure on $\mathbb{Z}_p^2$ with values in $M_{\underline{k}}^{(r+1,1)}(K_0(p),\mathcal{O}_L)$ that interpolates the Klingen Eisenstein series constructed before. Recall that we constructed a measure of Siegel Eisenstein series $\mathcal{E}_{\mathcal{D}, \sieg}$ in Proposition \ref{prop5.4}.  We consider the $V_{\infty,\infty}^{(r+1,1)}\otimes V_{\infty,\infty}^{(0,r)}$-valued measure $\tilde{\mathcal{E}}_\mathcal{D}$ on $\Gamma_\CK$ defined by
\begin{align*}
\tilde{\mathcal{E}}_\mathcal{D} = \boldsymbol{\tau}^{-1}(\det g_1)\boldsymbol{\psi}^{-1}(\det g_1)(\mathcal{E}_{\mathcal{D}, \sieg}\circ i)
\end{align*}
where $i$ is the embedding defined in Section \ref{Igusaunitary}. (Here, we write $(g,g_1)$ for elements in $U(r+1,1)\times U(0,r)$.).

Consider the action of group $1\times U(0,r)$. One first observes that at any arithmetic point $\phi\in\mathcal{X}$ the Eisenstein series constructed restricting to $U(0,r)$ is of weight $\underline{k}$ and invariant under the level $K_0(p)\subset U(0,r)(\mathbb{Q}_p)$ which are fixed throughout. Recall the discussion of subsection \ref{2.12}.  We can obtain a measure of $L^{(-c_r,\cdots, -c_1)}$-valued form on $U(r+1,1)\times U(0,r)$ which we still denote as $\tilde{\mathcal{E}}_\mathcal{D}$. The resulting forms on $U(0,r)$ will live in the $\hat{M}^{(0,r)}(K_0(p), \mathcal{O}_L[[\Gamma_\mathcal{K}]]\otimes L)$. (This is the very reason why our family will have coefficients in the Iwasawa algebra instead  of general affinoid Tate algebra). This is a bounded measure on $\Gamma_\mathcal{K}$.  This can be seen as follows.  Take a representative $\{g_1,\cdots,g_t\}$ of $G(\mathbb{Q})\backslash G(\mathbb{A}_f)/K^{(p)}{}^t\!K_0(p)$. Take a basis for $L^{(-c_r,\cdots, -c_1)}$ and write the entries of the vector valued $\tilde{\mathcal{E}}_\mathcal{D}$ considered here with respect to this fixed basis as $\tilde{\mathcal{E}}_{\mathcal{D},j}$'s for $j=1,2,\cdots$. Recall the scalar valued $\tilde{\mathcal{E}}_\mathcal{D}$ considered in the last paragraph takes $p$-adically integral values. Moreover it is a polynomial function when restricting to $g_iN(p\mathbb{Z}_p)$ of the entries in $N(p\mathbb{Z}_p)$ and degree determined by $\underline{k}$. Each $\tilde{\mathcal{E}}_{\mathcal{D},j}$ (which is actually some coefficient of that polynomial function by the construction of $L^{(-c_r,\cdots, -c_1)}$) is a $\mathbb{Q}_p$-linear combination of the values of the scalar valued $\tilde{\mathcal{E}}_\mathcal{D}$ at certain points in $g_iN(p\mathbb{Z}_p)$ (the coefficients in $\mathbb{Q}_p$ only depend on $\underline{k}$ and the basis of $L^{(-c_r,\cdots,-c_1)}$ we fixed). Therefore the $\tilde{\mathcal{E}}_{\mathcal{D},j}$'s must have bounded $p$-adic norms. 

We define a Klingen Eisenstein measure $\mathcal{E}_{\mathcal{D}, \Kling}$ by
$$\mathcal{E}_{\mathcal{D}, \Kling}(\phi):=\langle\tilde{\mathcal{E}}_\mathcal{D}(\phi),\hat\varphi\rangle_{\low}$$
for all continuous functions $\phi$ on $\Gamma_\mathcal{K}$, where the subscript ``$\low$'' means the pairing with respect to the group $\gamma(1\times U(0,r))$ (identifying $U(0, r)$ with $U(r, 0)$).  So $\mathcal{E}_{\mathcal{D}, \Kling}(\phi)$ takes values in the space of $p$-adic automorphic forms on $U(r+1, 1)$.
\begin{definition}
We define $\mathbf{E}_{\varphi,\xi_0}$ to be the Fourier-Jacobi expansion obtained by composing the measure $\mathcal{E}_{\mathcal{D}, \Kling}$ with the Fourier-Jacobi map.
\end{definition}

\begin{remark}\label{panchrmk}
For the reader familiar with \cite{Pan}, we briefly remark about the relationship of that work with ours in the case $r=2$.
In \cite{Pan} the level of the Eisenstein series at $p$ increases with the conductor of the character by which it is twisted. Thus he needs to construct the projector as $(U^{-\nu}\pi_{\alpha,1}U^\nu)$ (notation as in \emph{loc.cit}) where $\nu$ is the level and makes the Eisenstein measure only ``$h$-admissible'' instead of bounded as compared to our situation. In fact our situation coincides with the result obtained from another construction generalizing \cite{Hida} which constructed the $p$-adic $L$-functions for Rankin-Selberg convolutions for two Hida families $\mathbf{f}$ and $\mathbf{g}$, but we allow the family $\mathbf{g}$ with lower weight than $\mathbf{f}$ to be finite slope and $\mathbf{f}$ is still required to be ordinary.  (The construction works in the same way.)
\end{remark}
\subsubsection{Comparison of the $p$-adic and $C^\infty$ Differential Constructions}

  At a weight $(a_1,\ldots,a_r,0;\kappa)$, we consider
  \begin{align}\label{offequ}
E_{(\underline{k},0;\kappa)}^{C^\infty}(f_{\sieg},z_\kappa,-):=\mathrm{proj}(L^{(a_1,\cdots,a_r,0)}\boxtimes L^\kappa\boxtimes L^{(\kappa+a_1,\cdots,\kappa+a_r)})\left(\left(\partial(\kappa,C^\infty,d)E_\kappa(f_{\sieg},z_\kappa,-)\right)
\circ i\right).
\end{align}
where $i$ is the embedding defined in Section \ref{Igusaunitary}. (Here, by $\mathrm{proj}(L^{(a_1,\cdots,a_r,0)}\boxtimes L^\kappa\boxtimes L^{(\kappa+a_1,\cdots,\kappa+a_r)})$ we mean projection to a direct summand of $(\mathrm{St}_{r+1}\boxtimes \mathrm{St}_{r+1})^d$ isomorphic to $(L^{(a_1,\cdots,a_r,0)}\boxtimes L^\kappa\boxtimes L^{(\kappa+a_1,\cdots,\kappa+a_r})$.) Again by the discussion of \cite[Section 2.8]{Hsieh} on periods we know that this is
\begin{align*}
&E_{(\underline{k},0;\kappa)}^{C^\infty}(f_{\sieg},z_\kappa,-)(g,g_1)&&:=
\frac{1}{\Omega_\infty^{\frac{d}{2}+r\kappa}}\mathrm{proj}(L^{(a_1,\cdots,a_r,0)}\boxtimes L^\kappa\boxtimes L^{(\kappa+a_1,\cdots,\kappa+a_r)})&\\
&&&\left(\left(\partial(\kappa,C^\infty,d)E_\kappa(f_{\sieg},z_\kappa,-)\right)
\right)(\gamma(g,g_1)\cdot\Upsilon).&
\end{align*}

This is a nearly holomorphic form on $U(r+1,1)\times U(r)$. We can write $E_{(\underline{k},0;\kappa)}^{C^\infty}=\sum_i E_i\boxtimes \varphi_i$ where each $\varphi_i$ is a form in a certain irreducible automorphic representation of $U(0,r)(\mathbb{A}_\mathbb{Q})$. Thus, by Proposition \ref{questionprop}, we see that $E_{(\underline{k},0;\kappa)}^{C^\infty}(f_{\sieg},z_\kappa,-)$ is in fact holomorphic. (Although in our discussion for Klingen Eisenstein series we assumed the form we start with is tempered. However this is only for convenience of explicit calculation for pullback formula and is by no means serious. As long as the datum is in the absolutely convergence range of $P$ the whole argument works.)

  Let $\phi\in\mathcal{X}^{pb}$. Then by Proposition \ref{pullbackidentities-prop},
$$\left\langle E^{C^\infty}_{(\underline{k},0;\kappa)},\varphi\right\rangle_{\low}= \frac{1}{\Omega_\infty^{\frac{d}{2}+r\kappa}}E\left(f_{\Kling},z_\kappa,-\right)
.$$

In view of the pullback formula and our remarks in Section \ref{2.12}
$$\int_{\Gamma_\mathcal{K}}\phi d\mathcal{E}_{\mathcal{D}, \Kling}=\beta_{\underline{k}_\phi}
((\frac{1}{\Omega_\infty})^{\frac{d_\phi}{2}+r\kappa_\phi}E(f_{\Kling,\phi},z_\kappa,-))
.$$

  We can also consider
  \begin{align*}
  E_{(\underline{k},0;\kappa)}^{p-\mathrm{adic}}(f_{\sieg},z_\kappa,-):=\mathrm{proj}(L^{\underline{k},0}\boxtimes L^{\kappa+\underline{k}})\partial(\kappa,p-\mathrm{adic},d)E_\kappa(f_{\sieg},z_\kappa,-)\circ i.
  \end{align*}
We claim that the $p$-adic and $C^\infty$ constructions $E_{(\underline{k},0;\kappa)}^{C^\infty}$ and $E_{(\underline{k},0;\kappa)}^{p-\mathrm{adic}}$ actually coincide.
Consider the representation $\left(\mathrm{St}_{\GL_{r+2}}\times\mathrm{St}_{\GL_r}\right)^{d+\kappa r}$ of $\mathrm{GL}_{r+2}\times\mathrm{GL}_r$. Under the embedding $\mathrm{GL}_{r+1}\times\mathrm{GL}_1\times \mathrm{GL}_r\hookrightarrow \mathrm{GL}_{r+2}\times \mathrm{GL}_r$ it contains the representation $L^{(a_1,\ldots,a_r,0)}\boxtimes L^\kappa\boxtimes L^{(\kappa+a_1,\ldots,\kappa+a_r)}$ as a summand of representation of $\mathrm{GL}_{r+1}\times\mathrm{GL}_1\times\mathrm{GL}_r$. Note that the vector $v_{(\underline{k},0,\kappa)}$ is also the highest weight vector of an irreducible summand representation of $\left(\mathrm{St}_{\GL_{r+2}}\times\mathrm{St}_{\GL_r}\right)^{d+\kappa r}$. We write $E$ here for the Siegel Eisenstein series of weight $\kappa$ at the arithmetic point $\phi$. We consider the restriction of $D^dE$ to the group $U(r+1,1)\times U(0,r)$.  Let $G$ be the form corresponding to the irreducible sub-representation generated by $v_{(\underline{k},0;\kappa)}$ above. This can be viewed as a section of the deRham cohomology of the universal family.
So,  adapting the proof of \cite[Proposition 2.2.3]{urban} to the case of unitary groups by applying the description of the Gauss-Manin connection in terms of coordinates given in \cite[Section 3]{Eischen}, we see that $G$ itself takes values in the first filtration of the Hodge filtration.  More precisely, for any positive integer $d$, let $\mathcal{H}_k^d :=\omega^{\otimes k-d}\otimes Sym^d\left(\mathcal{H}_{dR}^{+}\otimes\mathcal{H}_{dR}^{-}\right)$, and let $\eta\in H^0\left(\mathcal{M}, \mathcal{H}_k^d\right)$.  Let $\pi: \mathcal{H}_n\rightarrow \mathcal{M}$ be the canonical projection.  As explained in \cite[Equations (3.6)-(3.10)]{Eischen}, there is a basis of $\mathbf{R}$-linear global relative one-forms $\alpha_i, \beta_i, \alpha_i', \beta_i'$, $1\leq i\leq n$, that are horizontal for the Gauss-Manin connection.  We also have the global relative one-forms $du_1, \ldots, du_{2n}, d\bar{u}_1, \ldots, d\bar{u}_{2n}$, where $u_1 \ldots, u_{2n}$ denote the standard coordinates in $\IC^{2n}$.  As explained in \cite[Equations (3.17)-(3.18)]{Eischen}, for $z\in\mathcal{H}_n$,
\begin{align}
\begin{pmatrix}
\beta_1+\alpha \beta_1'\\
\vdots\\
\beta_n+\alpha\beta_n'
\end{pmatrix}
&=\left({ }^tz-\bar{z}\right)^{-1}\begin{pmatrix}du_{n+1}-d\bar{u}_n\\ \vdots\\ du_{2n}-d\bar{u}_n\end{pmatrix}\label{uibeta-equ}\\
\begin{pmatrix}
\beta_1+\bar{\alpha} \beta_1'\\
\vdots\\
\beta_n+\bar{\alpha}\beta_n'
\end{pmatrix}&=\left(z-{ }^t\bar{z}\right)^{-1}\begin{pmatrix}du_1-d\bar{u}_{n+1}\\
\vdots\\
du_n-d\bar{u}_{2n}
\end{pmatrix}\label{uibeta-equ2},
\end{align}
where $\alpha$ is a certain purely imaginary complex number.  Now, let $\eta\in H^0\left(\mathcal{M}, \mathcal{H}_k^d\right)$.  Then $\pi^*\eta(z)$ is a linear combination of terms of the form $f(z)v$, where $f$ is holomorphic function on $\mathcal{H}_n$ and $v$ is a tensor product of $du_i$'s and $\beta_j\pm \alpha \beta_j'$'s ($1\leq i, j \leq n$).  Replacing each $\beta_i\pm\alpha\beta_i'$ by the appropriate term from the right hand side of Equations \eqref{uibeta-equ}-\eqref{uibeta-equ2}, we then see that $\eta$ can only be holomorphic if $v$ was only a tensor product of $du_i$'s, i.e. only if $r = 0$.  (When $n=1$, this is similar to the bottom of \cite[p. 7]{urban}, with $\tau$ replaced by $z$ and $\beta$ replaced by $\beta_1$.)  Thus the constructions using the unit root splitting and using the $C^\infty$-splitting are the same.  (Above, we have used \cite[Section 3]{Eischen} to adapt \cite[Proposition]{urban} to the case of $U(n,n)$; as noted on \cite[p. 4]{Eischen}, though, the constructions in \cite{Eischen} generalize to $U(r, s)$ for all $r\geq s>0$.  So we get a similar result for any non-definite unitary group $U(r, s)$, even if $r\neq s$.)\\

  \subsubsection{$p$-adic $L$-Functions}

  As in the construction of the Klingen Eisenstein family in Section \ref{section541}, by using the pullback $\gamma':U(r,0)\times U(0,r)\hookrightarrow U(r,r)$ and using $\mathcal{E}_{\mathcal{D}, \sieg}'$ in place of $\mathcal{E}_{\mathcal{D}, \sieg}$ and $\Upsilon'$ in the place of $\Upsilon$, we obtain a bounded measure $\tilde{\mathcal{E}}_\mathcal{D}'$ on $\Gamma_{\CK}$ that takes values in $V_{\infty,\infty}^{(r,0)}\otimes V_{\underline{k}}^{(0,r)}$. We similarly define a vector-valued measure $\tilde{\mathcal{E}}_\mathcal{D}$. We define $\mathcal{L}_{\varphi,\xi_0}^\Sigma\in\Lambda_{\mathcal{K},\mathcal{O}_L}$ to be such that
  \begin{align*}
\langle\tilde{\mathcal{E}}_\mathcal{D}',\hat{\varphi}\rangle_{\low}=\mathcal{L}_{\varphi,\xi_0}^\Sigma\varphi.
\end{align*}
By Propositions \ref{pullbackidentities-prop}(i) and \ref{prop411}, Lemmas \ref{lemma4.2} and \ref{lemma4.5}, and Definition \ref{csubscriptdef2}, we have the following interpolation formula for the $p$-adic $L$-function
$$\phi(\mathcal{L}_{\varphi,\xi_0}^\Sigma)=C_{\varphi,p}.\frac{L^\Sigma(\tilde{\pi},\xi_\phi,0)}{\Omega_\mathcal{K}^{d_\phi+r\kappa_\phi}}
c_{\phi}'.p^{\frac{\kappa_\phi r}{2}-\frac{r(r+1)}{2}}\mathfrak{g}(\tau_{1,\phi}^{-1})^r\prod_{i=1}^r(\chi_{i,\phi}\tau_{1,\phi})(p)\prod_{i=1}^r(\chi_{i,\phi}^{-1}\tau_{2,\phi})(p)
\bar{\tau}_\phi^c((p^r,1))$$
where $d_\phi=2(a_{1,\phi}+\cdots+a_{r,\phi})$, $c'_\phi=c'_{(\underline{k}_\phi,0,\kappa_\phi)}$ is an algebraic constant coming from an Archimedean integral and $C_{\varphi,p}$ is a product of local constant coming from the pullback integrals.

\subsection{Constant Terms}\label{constterms-section}
\subsubsection{$p$-adic $L$-functions for Dirichlet Characters}
As explained in \cite{KL} and \cite[Section 3.4.3]{SU}, there is an element $\mathcal{L}_{\bar{\tau}'}^\Sigma$ in $\Lambda_{\mathcal{K},\mathcal{O}_L}$ such that at each arithmetic point $\phi\in\mathcal{X}^{pb}$, $\phi(\mathcal{L}_{\bar{\tau}'})=L(\bar{\tau}'_\phi,\kappa_\phi-r)\cdot\pi^{r-\kappa_\phi}\tau_\phi'(p^{-1})p^{\kappa_\phi-r}\mathfrak{g}(\bar{\tau}_\phi')^{-1}$.   (This element is a $p$-adic Dirichlet $L$-function.)
\subsubsection{Proof of Theorem (iii)}
Now we consider part (iii) of Theorem \ref{main}.  In this section, we study the constant terms of the Klingen Eisenstein series along the (unique up to conjugacy) proper parabolic subgroup $P$ of $U(r+1,1)$.
Recall that by lemma \ref{Harris} in the absolutely convergent range for $P$, the $A(\rho,f,s)_{-s}$-part in the expression for the constant term is $0$.
\begin{theorem}\label{thisprop-proposition}
Suppose $r=2$.  Then for $\phi\in\mathcal{X}^{pb}$, the following hold:
\begin{itemize}
\item[(i)] For all $\underline{k}_\phi,\kappa_\phi$ such that
 $L(\tilde{\pi}_\phi,\bar{\tau}_\phi^c,z_{\kappa_\phi}+1)$ is in the absolutely convergent range,
\begin{align*}
c_{(\underline{k}_\phi,0,\kappa_\phi)}=c_{(\underline{k}_\phi,0,\kappa_\phi)}',
\end{align*}
where $c_{(\underline{k}_\phi,0,\kappa_\phi)}$ and $c_{(\underline{k}_\phi,0,\kappa_\phi)}'$ are defined as in Definitions \ref{csubscriptdef1} and \ref{csubscriptdef2}, respectively.
\\
\item[(ii)] $F_\varphi(f_{\sieg,\phi}, z_{\kappa_\phi},g_0)=F_\varphi'(f_{\sieg,\phi}', z_{\kappa_\phi}+\frac{1}{2},g_0')\phi(\mathcal{L}_{\bar{\tau}'}^\Sigma)$, where $g_0=\prod_{v\not\in\Sigma}1\prod_{v\in\Sigma\backslash \{p,\infty\}}w\cdot\prod_{v=p}w_r$ and $g_0'=\prod_v1$.  (Here, we use the natural identification $(I(\rho_\infty)\otimes L^{(\underline{k})})^{K^{(r+1,1)}_\infty}=(V_\pi^{\smfin}\otimes L^{(\underline{k})})^{K^{(r,0)}_\infty}=(V_\pi^{\smfin}\otimes L^{(\underline{k},0;\kappa)})^{K^{(r,0)}_\infty}$ at $\infty$.)
\end{itemize}
\end{theorem}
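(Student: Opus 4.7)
The plan is to prove (i) and (ii) in tandem, since (ii) reduces to a local–global product of ratios of pullback integrals in which (i) handles precisely the archimedean contribution.

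For (i), the key observation is that $c_{(\underline{k}_\phi,0,\kappa_\phi)}$ and $c_{(\underline{k}_\phi,0,\kappa_\phi)}'$ depend only on the pair $(\underline{k}_\phi,\kappa_\phi)$: both are defined purely archimedeanly from $\pi_\infty$, which is the finite dimensional irreducible representation of the compact group $U(r,0)(\mathbb{R})$ determined by $\underline{k}_\phi$. For $r=2$, the Sato--Tate theorem supplies a tempered cuspidal automorphic representation $\pi^{\mathrm{ord}}$ of $GU(2,0)$ that is ordinary at some auxiliary split prime $\ell\neq p$. In the ordinary setting, one has access to the constructions of Hsieh (and the authors' earlier work), which produce both a constant-term-of-Klingen interpretation and a direct pullback interpretation of the same $p$-adic $L$-function. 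The archimedean factors in the two resulting interpolation formulas are, respectively, $c_{(\underline{k}_\phi,0,\kappa_\phi)}$ and $c_{(\underline{k}_\phi,0,\kappa_\phi)}'$ (up to common explicit $\Gamma$-factors and powers of $2\pi i$ that are already known to be equal), and they must coincide since the two interpolation formulas interpolate the same $p$-adic $L$-function; hence equality for $\pi^{\mathrm{ord}}$ forces equality for the $(\underline{k}_\phi,\kappa_\phi)$-dependent constant in general.

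For (ii), I factor each side as a product of local pullback integrals and compare place by place. At a place $v\notin\Sigma$, Lemma \ref{lemma4.2} gives
\[
F_\varphi(f_v^{\mathrm{sph}};s,1)=\frac{L(\tilde\pi,\bar\tau^c,s+1)}{\prod_{i=0}^{r-1}L(2s+r+1-i,\bar\tau'\chi_{\CK}^{i})}\,F_{\rho,s}(1),
\]
and the analogous formula for $F_\varphi'$; with $s=z_{\kappa_\phi}$ and $s'=z_{\kappa_\phi}+\tfrac12$, one checks $s+1=s'+\tfrac12$ and $2s+r+1-i=2s'+r-i$, so numerators and denominators match identically and the unramified local ratio is $1$. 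At $v\in\Sigma\setminus\{p,\infty\}$, Lemmas \ref{lemma4.4} and \ref{lemma4.5} evaluate the pullback sections at $g_0=w$ and $g_0'=1$, and the only $s$-dependent discrepancy is $|(y\bar y)^2|_v^{(s'-s)-\frac12}=1$. At $v=p$, Propositions \ref{prop4.10} and \ref{prop411} give essentially the same explicit local value, with the ratio being exactly the factor $\tau_p'(p^{-1})p^{\kappa_\phi-r}\mathfrak{g}(\bar\tau_p')^{-1}$ that appears in $\phi(\mathcal{L}_{\bar\tau'}^\Sigma)$. At $v=\infty$, Definitions \ref{csubscriptdef1} and \ref{csubscriptdef2} identify the archimedean ratio with $c_{(\underline{k}_\phi,0,\kappa_\phi)}/c_{(\underline{k}_\phi,0,\kappa_\phi)}'$, which is $1$ by part (i). Finally, comparing the global normalizing constants in $f_{\mathrm{sieg},\phi}$ and $f_{\mathrm{sieg},\phi}'$, the $L$-factors for $i=0,\ldots,r-1$ match pairwise as above and leave a single surviving factor $L^\Sigma(\kappa_\phi-r,\bar\tau_\phi'\chi_{\CK}^{r})$, while the extra $(2\pi i)^{-\kappa_\phi}$ and the ratio of factorials $(\kappa_\phi-r-1)!/(\kappa_\phi-1)!$ together produce precisely the archimedean period and $\Gamma$-factor contribution expected in the interpolation formula for $\phi(\mathcal{L}_{\bar\tau'}^\Sigma)$. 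Multiplying all these local contributions gives the asserted identity.

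The main obstacle is genuinely establishing (i): there is no direct way to compute the archimedean constants $c$ and $c'$ in closed form in the vector-valued setting, so the argument must borrow information from a parallel ordinary construction, and one needs to be careful that the ordinary construction yields an interpolation formula in exactly the same normalization (in particular, the same choice of CM period $\Omega_\infty$, the same choice of highest-weight vector projector, and the same $\Gamma$-factor conventions) so that the archimedean factors can be meaningfully identified. A secondary technical subtlety is ensuring that the sign and complex-conjugation conventions for the characters $\bar\tau$, $\bar\tau^c$, and $\bar\tau'$ used in the normalizations of $f_{\mathrm{sieg},\phi}$ and $f_{\mathrm{sieg},\phi}'$ line up correctly, so that the leftover $L$-factor after cancellation is exactly the Dirichlet $L$-factor $p$-adically interpolated by $\mathcal{L}_{\bar\tau'}^\Sigma$.
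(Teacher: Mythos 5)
Your overall strategy is the same as the paper's: part (ii) is a place-by-place comparison of local pullback integrals, and part (i) --- the real content --- is obtained by passing to an auxiliary split prime $\ell$ at which a tempered form on $GU(2,0)$ is ordinary (supplied by Sato--Tate). Your local bookkeeping for (ii) is correct: with $s=z_{\kappa_\phi}$ and $s'=z_{\kappa_\phi}+\tfrac12$ the unramified numerators and denominators in Lemma \ref{lemma4.2} match, the $\ell$-adic exponents in Lemma \ref{lemma4.5} agree, the surviving Dirichlet factor $L^\Sigma(\kappa_\phi-r,\bar\tau_\phi')$ together with the extra factor $\tau_\phi'(p^{-1})p^{\kappa_\phi-r}\mathfrak{g}(\bar\tau_\phi')^{-1}$ distinguishing Proposition \ref{prop4.10} from Proposition \ref{prop411} is exactly what $\phi(\mathcal{L}_{\bar\tau'}^\Sigma)$ accounts for, and the archimedean ratio is $c_{(\underline{k}_\phi,0,\kappa_\phi)}/c_{(\underline{k}_\phi,0,\kappa_\phi)}'$.

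However, your justification of (i) has a genuine gap. You assert that in the ordinary $\ell$-adic setting the constant-term and direct-pullback constructions ``interpolate the same $p$-adic $L$-function,'' and deduce $c=c'$ from that. But that the two constructions agree is precisely what has to be proven, and at \emph{vector-valued} arithmetic points the archimedean integrals are not computable in closed form even in the ordinary case, so ordinarity alone does not hand you this agreement. The paper's mechanism avoids the circularity in a way your proposal omits: (a) at \emph{scalar-weight} arithmetic points the constants $c_{(\underline{k},0,\kappa)}$ and $c_{(\underline{k},0,\kappa)}'$ are computed explicitly and shown equal (\cite[Lemma 4.1.2]{WAN}); (b) since the scalar-weight arithmetic points are Zariski dense in $\Spec\Lambda_{\mathcal{K},\mathcal{O}_L}$, this pointwise equality upgrades to the identity (ii) as an identity of $\ell$-adic \emph{families}; (c) one then specializes that family identity at vector-valued points where the $L$-value is in the absolutely convergent range, hence non-zero, and cancels it to isolate $c_{(\underline{k}_\phi,0,\kappa_\phi)}=c_{(\underline{k}_\phi,0,\kappa_\phi)}'$ at those points; (d) since (i) is purely archimedean, it transfers back to the original prime $p$ and the original $\pi$, whereupon the local comparison gives (ii) for $p$. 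Your proposal is missing the scalar-weight anchor and the Zariski-density step (b), and it never invokes the non-vanishing of the specialized $L$-value needed in step (c); without these, the claim that ``equality for $\pi^{\mathrm{ord}}$ forces equality in general'' is unsupported.
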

\begin{proof}
Note that (i) only involves Archimedean computations.  To prove it, we use an auxiliary prime at which the form is ordinary. By Sato-Tate \cite{satotate1, satotate2}, we can find a prime $\ell$ split in $\mathcal{K}$ such that $\pi$ is ordinary at $\ell$. We can run the constructions of the $\ell$-adic $L$-functions and $\ell$-adic Klingen Eisenstein series as well. At the arithmetic points of scalar weights, we know (i) by the calculations in \cite[Lemma 4.1.2]{WAN}. Note that the set of arithmetic points of scalar weight is dense this implies the corresponding identity (ii) in the $\ell$-adic case (for the construction of $\ell$-adic Klingen Eisenstein series and $\ell$-adic $L$-functions). Now we specialize to vector-valued arithmetic points such that the corresponding special $L$-values are absolutely convergent (thus non-zero). This implies (i) for all such points.  Returning to our original $p$, then (i) implies (ii) for $p$ and $\pi$, which concludes the proof.
\end{proof}

To study the constant terms of the Klingen Eisenstein series we need to compare the period factors for Klingen Eisenstein series and $p$-adic $L$-functions. Note that \cite[Section 1]{SU06} explains that $(V_\pi\otimes V_{\underline{k}})^{K_\infty^{r,0}}=(V_\pi\otimes V_{(\underline{k},0;\kappa)})^{K_\infty^{r,0}}$, using minimal type theory (where we have identified $V_{\underline{k}}$ as a sub-representation of $V_{(\underline{k},0;\kappa)}|_{K_\infty^{(r,0)}}$). For $v\in I(\rho)\otimes V_{(\underline{k},0;\kappa)}^{K_\infty^{(r+1,1)}}$, we have $v(1)\in (V_\pi\otimes V_{\underline{k}})^{K_\infty^{(r,0)}}$, by considering the action of $K_\infty^{(r,0)}$. This implies that for the vector-valued Klingen Eisenstein series, the constant terms only have entries in $V_{\underline{k}}$. By the comparison of analytic and algebraic Fourier-Jacobi expansions in \cite[Section 2.8]{Hsieh}, the period factor showing up for the constant terms is $\Omega_\infty^{\frac{d_\phi}{2}}$. Therefore we have the following consequence:
\begin{corollary}\label{cor5.9}
Part (iii) of Theorem \ref{main} is true, i.e. if $r=2$ then the constant terms of $\mathbf{E}_{\varphi,\xi_0}$ are divisible by $\mathcal{L}_{\varphi,\xi_0}^\Sigma\cdot\mathcal{L}_{\bar{\tau}'}^\Sigma$.
\end{corollary}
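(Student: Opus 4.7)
The plan is to trace the constant term of $\mathbf{E}_{\varphi,\xi_0}$ through three steps: first eliminate all contributions except the Klingen section via vanishing of the intertwining operator; second, convert that Klingen section into a Siegel pullback times a Dirichlet-type $p$-adic $L$-function using Theorem \ref{thisprop-proposition}(ii); and third, recognize the remaining pullback as the defining interpolation of $\mathcal{L}_{\varphi,\xi_0}^\Sigma$. The Zariski density of $\mathcal{X}^{pb}$ in $\Spec\Lambda_{\mathcal{K},\mathcal{O}_L}$ then promotes a pointwise factorization to an identity, and hence a divisibility, of $\Lambda$-adic $q$-expansion coefficients.

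More precisely, for each $\phi\in\mathcal{X}^{pb}$ the specialization $\phi(\mathbf{E}_{\varphi,\xi_0})$ is the Fourier--Jacobi expansion of $E(f_{\Kling,\phi},z_{\kappa_\phi},-)$ (up to the projection onto the highest weight vector). By Lemma \ref{constant}, only the constant term along the parabolic $P$ is nonzero, and equals $f_{\Kling,\phi,s}+A(\rho,f,s)_{-s}$; by Lemma \ref{Harris} the intertwining term vanishes in the absolute convergence range. Hence the $\beta=0$ piece of the Fourier--Jacobi expansion is identified with the Klingen section, which by \eqref{fpkling-equ} and the pullback identity \eqref{pullbak2-equ} is (up to the normalizing local Euler factors built into $f_{\Kling,\phi}$) the value $F_\varphi(f_{\sieg,\phi},z_{\kappa_\phi},g_0)$ along the standard choice of element $g_0$ used in Theorem \ref{thisprop-proposition}(ii). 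Applying that theorem we rewrite this as
\begin{equation*}
\phi\bigl(a_{[g]}^t(0)\bigr)\ =\ F_\varphi'\bigl(f_{\sieg,\phi}',z_{\kappa_\phi}+\tfrac{1}{2},g_0'\bigr)\cdot \phi(\mathcal{L}_{\bar\tau'}^\Sigma),
\end{equation*}
after matching the Archimedean constants via part (i) of Theorem \ref{thisprop-proposition} (here the hypothesis $r=2$ is used, via the Sato--Tate input, to ensure $c_{(\underline{k}_\phi,0,\kappa_\phi)}=c'_{(\underline{k}_\phi,0,\kappa_\phi)}$).

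For the first factor, recall that the $p$-adic $L$-function was defined by $\langle \tilde{\mathcal{E}}_\mathcal{D}',\hat\varphi\rangle_{\low}=\mathcal{L}_{\varphi,\xi_0}^\Sigma\cdot \varphi$, and the measure $\tilde{\mathcal{E}}_\mathcal{D}'$ interpolates precisely the differential-operator pullback $F_\varphi'(f_{\sieg,\phi}',z_{\kappa_\phi}+\tfrac{1}{2},-)$ at each arithmetic point. Consequently, modulo the algebraic Archimedean constant $c'_{(\underline{k}_\phi,0,\kappa_\phi)}$, the local pullback and $U_p$-factors at $p$ computed in Proposition \ref{prop411} and Lemma \ref{lemma4.5}, and the transcendental period $\Omega_\infty^{d_\phi/2+r\kappa_\phi}$, the value $F_\varphi'(f_{\sieg,\phi}',z_{\kappa_\phi}+\tfrac{1}{2},g_0')$ is (up to the already interpolated $p$-adic units) $\phi(\mathcal{L}_{\varphi,\xi_0}^\Sigma)$. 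The period comparison needed here is exactly the one recorded in the preamble to the corollary: the vector-valued Klingen Eisenstein series is forced, by minimal $K_\infty^{(r,0)}$-type theory, to have constant term lying in $V_{\underline{k}}\subset V_{(\underline{k},0;\kappa)}$, so the period factor that appears upon passing from analytic to algebraic Fourier--Jacobi expansions is only $\Omega_\infty^{d_\phi/2}$, matching what one obtains on the $L$-function side.

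Putting the two factorizations together gives, for every $\phi\in\mathcal{X}^{pb}$, an equality
\begin{equation*}
\phi\bigl(a_{[g]}^t(0)\bigr)\ =\ \phi\bigl(\mathcal{L}_{\varphi,\xi_0}^\Sigma\cdot\mathcal{L}_{\bar\tau'}^\Sigma\bigr)\cdot\phi(H_{[g],t}),
\end{equation*}
where $H_{[g],t}\in\Lambda_{\mathcal{K},\mathcal{O}_L}\otimes_{\mathbb{Z}_p}\mathcal{R}_{[g],\infty}$ is the element interpolating the remaining local and global data (local pullback constants at finite primes, the Hecke eigenvalue normalizations, and the algebraic Archimedean constant). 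Since $\mathcal{X}^{pb}$ is Zariski dense in $\Spec\Lambda_{\mathcal{K},\mathcal{O}_L}$ and the target ring $\Lambda_{\mathcal{K},\mathcal{O}_L}\otimes_{\mathbb{Z}_p}\mathcal{R}_{[g],\infty}$ is $p$-adically separated, the identity of $\Lambda$-adic Fourier--Jacobi coefficients $a_{[g]}^t(0)=\mathcal{L}_{\varphi,\xi_0}^\Sigma\cdot\mathcal{L}_{\bar\tau'}^\Sigma\cdot H_{[g],t}$ follows, which is the claimed divisibility.

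The main obstacle in executing this plan is not the vanishing of the intertwining operator (which is Harris's Lemma \ref{Harris}) nor the pullback identity (which is Theorem \ref{thisprop-proposition}(ii)), but the bookkeeping of Archimedean and $p$-adic normalization factors. Specifically, one must check that the constant $c_{(\underline{k}_\phi,0,\kappa_\phi)}$ built into the Klingen construction (Definition \ref{csubscriptdef1}) equals the constant $c'_{(\underline{k}_\phi,0,\kappa_\phi)}$ built into the $p$-adic $L$-function construction (Definition \ref{csubscriptdef2}), and that the powers of $\Omega_\infty$ needed to algebraize the two sides agree. This is exactly where part (i) of Theorem \ref{thisprop-proposition} (and hence the Sato--Tate input restricting us to $r=2$) enters.
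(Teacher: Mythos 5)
Your proposal is correct and follows essentially the same route as the paper: the paper's own (one-line) proof cites exactly Lemma \ref{Harris}, Theorem \ref{thisprop-proposition}, Lemma \ref{constant}, and the local pullback computations, and the period-factor matching you describe is precisely the discussion the paper places in the paragraph preceding the corollary. You have simply expanded the same chain of reasoning, including the Zariski-density step that the paper leaves implicit.
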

This is a consequence of Proposition \ref{Harris}, Theorem \ref{thisprop-proposition}, and Lemma \ref{constant}, together with the calculations of the local pullback sections in the last section. Note that we are looking at the Fourier-Jacobi coefficient at $\beta=0$.

\section{Acknowledgement}

We would like to the thank the referee for a close reading of the paper and for helpful suggestions that improved the quality and readability of the paper.

\bibliography{SupersingularBibliography}

\providecommand{\bysame}{\leavevmode\hbox to3em{\hrulefill}\thinspace}
\providecommand{\MR}{\relax\ifhmode\unskip\space\fi MR }
\providecommand{\MRhref}[2]{%
  \href{http://www.ams.org/mathscinet-getitem?mr=#1}{#2}
}
\providecommand{\href}[2]{#2}
\begin{thebibliography}{BLGHT11}

\bibitem[BLGHT11]{satotate2}
Tom Barnet-Lamb, David Geraghty, Michael Harris, and Richard Taylor, \emph{A
  family of {C}alabi-{Y}au varieties and potential automorphy {II}}, Publ. Res.
  Inst. Math. Sci. \textbf{47} (2011), no.~1, 29--98. \MR{2827723
  (2012m:11069)}

\bibitem[Cas95]{casselman}
William Casselman, \emph{Introduction to the theory of admissible
  representations of p-adic reductive groups}, Available at
  \url{http://www.math.ubc.ca/~cass/research/pdf/p-adic-book.pdf}, 1995.

\bibitem[Eis12]{Eischen}
Ellen~E. Eischen, \emph{{$p$}-adic differential operators on automorphic forms
  on unitary groups}, Ann. Inst. Fourier (Grenoble) \textbf{62} (2012), no.~1,
  177--243. \MR{2986270}

\bibitem[Eis13a]{apptoHLS}
\bysame, \emph{A $p$-adic {E}isenstein measure for unitary groups}, Accepted
  for publication in the Journal f\"ur die reine und angewandte Mathematik
  (Crelle's Journal). 32 pages. DOI 10.1515/ crelle-2013-0008.

\bibitem[Eis13b]{apptoHLSvv}
\bysame, \emph{A $p$-adic {E}isenstein measure for vector-weight automorphic
  forms}, 36 pages. Recommended for publication pending minor revisions. Also
  available at \url{http://arxiv.org/pdf/1302.7229.pdf}.

\bibitem[GPSR87]{GPSR}
Stephen Gelbart, Ilya Piatetski-Shapiro, and Stephen Rallis, \emph{Explicit
  constructions of automorphic {$L$}-functions}, Lecture Notes in Mathematics,
  vol. 1254, Springer-Verlag, Berlin, 1987. \MR{MR892097 (89k:11038)}

\bibitem[Har84]{HarrisESeries}
Michael Harris, \emph{Eisenstein series on {S}himura varieties}, Ann. of Math.
  (2) \textbf{119} (1984), no.~1, 59--94. \MR{736560 (85j:11052)}

\bibitem[Har08]{Harris}
\bysame, \emph{A simple proof of rationality of {S}iegel-{W}eil {E}isenstein
  series}, Eisenstein series and applications, Progr. Math., vol. 258,
  Birkh\"auser Boston, Boston, MA, 2008, pp.~149--185. \MR{2402683
  (2009g:11061)}

\bibitem[Hid91]{Hida}
Haruzo Hida, \emph{On {$p$}-adic {$L$}-functions of {${\rm GL}(2)\times {\rm
  GL}(2)$} over totally real fields}, Ann. Inst. Fourier (Grenoble) \textbf{41}
  (1991), no.~2, 311--391. \MR{1137290 (93b:11052)}

\bibitem[Hid00]{GME}
\bysame, \emph{Geometric modular forms and elliptic curves}, World Scientific
  Publishing Co. Inc., River Edge, NJ, 2000. \MR{1794402 (2001j:11022)}

\bibitem[Hid04]{Hida04}
\bysame, \emph{{$p$}-adic automorphic forms on {S}himura varieties}, Springer
  Monographs in Mathematics, Springer-Verlag, New York, 2004. \MR{MR2055355
  (2005e:11054)}

\bibitem[Hid05]{hi05}
\bysame, \emph{{$p$}-adic automorphic forms on reductive groups}, Ast\'erisque
  (2005), no.~298, 147--254, Automorphic forms. I. \MR{MR2141703 (2006e:11060)}

\bibitem[HLS05]{HLSpaper}
Michael Harris, Jian-Shu Li, and Christopher~M. Skinner, \emph{The {R}allis
  inner product formula and {$p$}-adic {$L$}-functions}, Automorphic
  representations, {$L$}-functions and applications: progress and prospects,
  Ohio State Univ. Math. Res. Inst. Publ., vol.~11, de Gruyter, Berlin, 2005,
  pp.~225--255. \MR{2192825 (2006k:11096)}

\bibitem[HSBT10]{satotate1}
Michael Harris, Nick Shepherd-Barron, and Richard Taylor, \emph{A family of
  {C}alabi-{Y}au varieties and potential automorphy}, Ann. of Math. (2)
  \textbf{171} (2010), no.~2, 779--813. \MR{2630056 (2011g:11106)}

\bibitem[Hsi11]{Hsieh2}
Ming-Lun Hsieh, \emph{Ordinary {$p$}-adic {E}isenstein series and {$p$}-adic
  {$L$}-functions for unitary groups}, Ann. Inst. Fourier (Grenoble)
  \textbf{61} (2011), no.~3, 987--1059. \MR{2918724}

\bibitem[Hsi13]{Hsieh}
M.~L. Hsieh, \emph{Eisenstein congruence on unitary groups and iwasawa main
  conjectures for cm fields}, Journal of the American Mathematical Society
  (2013), Accepted for Publication. 84 pages.

\bibitem[Jan87]{RAG}
Jens~Carsten Jantzen, \emph{Representations of algebraic groups}, Pure and
  Applied Mathematics, vol. 131, Academic Press Inc., Boston, MA, 1987.
  \MR{899071 (89c:20001)}

\bibitem[Kat78]{Katz78}
Nicholas~M. Katz, \emph{{$p$}-adic {$L$}-functions for {CM} fields}, Invent.
  Math. \textbf{49} (1978), no.~3, 199--297. \MR{MR513095 (80h:10039)}

\bibitem[KL64]{KL}
Tomio Kubota and Heinrich-Wolfgang Leopoldt, \emph{Eine {$p$}-adische {T}heorie
  der {Z}etawerte. {I}. {E}inf\"uhrung der {$p$}-adischen {D}irichletschen
  {$L$}-{F}unktionen}, J. Reine Angew. Math. \textbf{214/215} (1964), 328--339.
  \MR{0163900 (29 \#1199)}

\bibitem[Lan12]{lanalgan}
Kai-Wen Lan, \emph{Comparison between analytic and algebraic constructions of
  toroidal compactifications of {PEL}-type {S}himura varieties}, J. Reine
  Angew. Math. \textbf{664} (2012), 163--228. \MR{2980135}

\bibitem[Lan13]{LAN}
\bysame, \emph{Arithmetic compactifications of {PEL}-type shimura varieties},
  London Mathematical Society Monographs, vol.~36, Princeton University Press,
  2013.

\bibitem[LR05]{LR05}
Erez~M. Lapid and Stephen Rallis, \emph{On the local factors of representations
  of classical groups}, Automorphic representations, {$L$}-functions and
  applications: progress and prospects, Ohio State Univ. Math. Res. Inst.
  Publ., vol.~11, de Gruyter, Berlin, 2005, pp.~309--359. \MR{2192828
  (2006j:11071)}

\bibitem[Pan03]{Pan}
A.~A. Panchishkin, \emph{Two variable {$p$}-adic {$L$} functions attached to
  eigenfamilies of positive slope}, Invent. Math. \textbf{154} (2003), no.~3,
  551--615. \MR{MR2018785 (2004k:11065)}

\bibitem[Shi97]{Shi97}
Goro Shimura, \emph{Euler products and {E}isenstein series}, CBMS Regional
  Conference Series in Mathematics, vol.~93, Published for the Conference Board
  of the Mathematical Sciences, Washington, DC, 1997. \MR{MR1450866
  (98h:11057)}

\bibitem[Shi00]{Shi00}
\bysame, \emph{Arithmeticity in the theory of automorphic forms}, Mathematical
  Surveys and Monographs, vol.~82, American Mathematical Society, Providence,
  RI, 2000. \MR{MR1780262 (2001k:11086)}

\bibitem[SU06]{SU06}
Christopher Skinner and Eric Urban, \emph{Vanishing of {$L$}-functions and
  ranks of {S}elmer groups}, International {C}ongress of {M}athematicians.
  {V}ol. {II}, Eur. Math. Soc., Z\"urich, 2006, pp.~473--500. \MR{2275606
  (2008a:11063)}

\bibitem[SU13]{SU}
\bysame, \emph{The {I}wasawa main conjecture for ${GL}_2$}, Inventiones Math.
  (2013), Accepted for publication.

\bibitem[Urb13]{urban}
Eric Urban, \emph{Nearly overconvergent modular forms}, Proceedings of the
  conference IWASAWA 2012 held at Heidelberg, 2013, To appear. Also available
  at \url{http://www.math.columbia.edu/~urban/eurp/quasi-surconv.pdf}.

\bibitem[Wan13]{WAN}
Xin Wan, \emph{$p$-adic ${L}$-functions of ordinary forms on unitary groups and
  {E}isenstein series}, 2013, Available at
  \url{http://www.math.columbia.edu/~xw2295/families\%20of\%20ordinary%20Eisenstein\%20Series.pdf}.

\end{thebibliography}

\textsc{Ellen Eischen, Department of Mathematics, The University of North Carolina at Chapel Hill, CB \#3250,
Chapel Hill, NC 27599-3250,
USA}\\
\indent \textit{E-mail Address}: eeischen@email.unc.edu\\

\textsc{Xin Wan, Department of Mathematics, Columbia University, New York, NY 10025, USA}\\
\indent \textit{E-mail Address}: xw2295@math.columbia.edu\\

\end{document}